\theoremstyle{plain}
\newtheorem{thm}{\protect\theoremname}
\theoremstyle{plain}
\newtheorem{conjecture}[thm]{\protect\conjecturename}
\theoremstyle{plain}
\theoremstyle{remark}
\newtheorem{claim}[thm]{\protect\claimname}
\theoremstyle{plain}
\newtheorem{lem}[thm]{\protect\lemmaname}
\theoremstyle{plain}
\newtheorem{prop}[thm]{\protect\propositionname}
\theoremstyle{remark}
\theoremstyle{remark}
\newtheorem{rem}[thm]{\protect\remarkname}
\theoremstyle{definition}
\newtheorem{defn}[thm]{\protect\definitionname}
\theoremstyle{definition}
\newtheorem{example}[thm]{\protect\examplename}
\theoremstyle{plain}
\newtheorem{cor}[thm]{\protect\corollaryname}
\theoremstyle{plain}
\numberwithin{thm}{section}
\providecommand{\claimname}{\inputencoding{latin9}Claim}
\providecommand{\conjecturename}{\inputencoding{latin9}Conjecture}
\providecommand{\corollaryname}{\inputencoding{latin9}Corollary}
\providecommand{\definitionname}{\inputencoding{latin9}Definition}
\providecommand{\examplename}{\inputencoding{latin9}Example}
\providecommand{\lemmaname}{\inputencoding{latin9}Lemma}
\providecommand{\notename}{\inputencoding{latin9}Note}
\providecommand{\propositionname}{\inputencoding{latin9}Proposition}
\providecommand{\questionname}{\inputencoding{latin9}Question}
\providecommand{\remarkname}{\inputencoding{latin9}Remark}
\providecommand{\theoremname}{\inputencoding{latin9}Theorem}
\providecommand{\problemname}{\inputencoding{latin9}Problem}
\newcommand\twoheaduparrow{\mathrel{\rotatebox{90}{$\twoheaduparrow$}}}
\newcommand\twoheaddownarrow{\mathrel{\rotatebox{270}{$\twoheaddownarrow$}}}
\newenvironment{nouppercase}{%
	\renewcommand{\uppercasenonmath}[1]{}}{}
\begin{document}
	\author{Nick Early}
	\thanks{Perimeter Institute for Theoretical Physics \\
		email: \href{mailto:earlnick@gmail.com}{earlnick@gmail.com}}
	
	\title{W\MakeLowercase{eighted blade arrangements and the positive tropical} G\MakeLowercase{rassmannian}}
	
	\begin{nouppercase}
	\maketitle
\end{nouppercase}
	\begin{abstract}
		In this paper, we continue our study of blade arrangements and the positroidal subdivisions which are induced by them on hypersimplices $\Delta_{k,n}$.  The prototypical blade is a tropical hypersurface which is generated by a system of $n$ affine simple roots of type $SL_n$ and as such enjoys a cyclic symmetry.  When placed at the center of a simplex, a blade induces a decomposition into $n$ maximal cells which are combinatorially cubes, known as Pitman-Stanley polytopes.
		
		We introduce a complex $(\mathfrak{B}_{k,n},\partial)$ of weighted blade arrangements, and we prove that the positive tropical Grassmannian surjects onto the top component of the complex, such that the induced weights on blades in the faces $\Delta_{2,n-(k-2)}$ of $\Delta_{k,n}$ are (1) nonnegative and (2) their support is weakly separated.
		
		We introduce a hierarchy of elementary weighted blade arrangements for all hypersimplices which is minimally closed under the boundary maps $\partial$, and we conjecture that any such element of this hierarchy induces a ray of the positive tropical Grassmannian $\text{Trop}^+G(k,n)$.  We apply our results to classify up to isomorphism type all rays of the positive tropical Grassmannian $\text{Trop}_+ G(3,n)$ for $n\le 9$.  Along the way, we prove linear independence for a certain set of planar kinematic invariants introduced previously.  

	\end{abstract}
	
	\begingroup
	\let\cleardoublepage\relax
	\let\clearpage\relax
	\tableofcontents
	\endgroup

\section{Introduction}
In this paper, we show that the positive tropical Grassmannian $\text{Trop}^+G(k,n)$, introduced in \cite{SpeyerWilliams2003}, embeds as a subfan of the space of weighted arrangements of cyclically twisted tropical hyperplanes, called blades \cite{Early19WeakSeparationMatroidSubdivision}, on the vertices of the hypersimplex $\Delta_{k,n}$.  We introduce a basis for the space of height functions over $\Delta_{k,n}$, which define surfaces whose lower-envelope project down onto $\Delta_{k,n}$ to induce a particular kind of subdivision, called a multi-split, see for instance \cite{HermannSplits} and in particular \cite{Schroeter}.  This induces a basis for the space of kinematic functions, see Theorem \ref{thm: planar basis}.

We give a combinatorial characterization of the image using positivity and certain pairwise orthogonality constraints on the second hypersimplicial faces of $\Delta_{k,n}$, each of which is modulo translation equal to $\Delta_{2,n-(k-2)}$.  We formulate a conjecture which would construct many new rays of the positive tropical Grassmannian $\text{Trop}^+G(k,n)$ and would have deep implications for the study of the singularities certain rational functions which occur in theoretical physics in the study of scattering amplitudes, the generalized biadjoint scalar amplitude, introduced by Cachazo, Early, Guevara and Mizera (CEGM).  The conjecture is that the elements which we construct induce positroidal subdivisions of $\Delta_{k,n}$ which are coarsest, and generate rays of $\text{Trop}^+G(k,n)$.

The tropical Grassmannian $\text{Trop}G(k,n)$, introduced by Speyer-Sturmfels in \cite{SpeyerStumfelsTropGrass}, parametrizes tropicalizations of linear spaces, while the Dressian parametrizes all tropical linear spaces.  These two have so-called positive analogs (for the totally positive Grassmannian see \cite{SpeyerWilliams2003}) which were independently shown to coincide in \cite{arkani2020positive,SpeyerWilliams2020}.

One of the first and main challenges in the study of $\text{Trop}^+G(k,n)$ is a complete description of its one-skeleton, when it is given the structure of a polyhedral complex.  We single out certain distinguished collections of weighted blade arrangements which are to serve as building blocks for the 1-skeleton of $\text{Trop}^+G(k,n)$; we formulate a conjecture.  We conjecture that any such weighted blade arrangement induces a positroidal subdivision of $\Delta_{k,n}$ which is coarsest, and thus a ray of $\text{Trop}^+G(k,n)$.  

Blades were defined by Ocneanu in \cite{OcneanuVideo} and were first studied in the context of combinatorial geometry and physics in \cite{EarlyBlades}.  A blade is a cyclically symmetric tropical hypersurface constructed from a system of affine roots of type $SL_n$; it is is linearly isomorphic to a tropical hyperplane and there is one (nondegenerate) blade for each cyclic order on $\{1,\ldots, n\}$.  But blades can be degenerated and translated, and one can take weighted arrangements or formal linear combinations.  In \cite{Early19WeakSeparationMatroidSubdivision} it was shown that certain matroidal blade arrangements on the vertices of the hypersimplex $\Delta_{k,n}$ are in bijection with weakly separated collections of $k$ element subsets $\{12,\ldots, n\}$.  Weakly separated collections, in turn, have also played a central role in the study of zonotopal tilings and the KP equation, see for instance \cite{Galashin,GalashinPostnikovWilliams}.

One of the novelties of embedding the positive tropical Grassmannian into the space of matroidal weighted blade arrangements is the following: rather than taking a discrete set of heights over the vertices of a polytope and projecting down the lower envelope of the convex hull of the heights in the standard way, one realizes matroid subdivisions as certain weighted collections of polyhedral cones, subject to the positivity and mutual orthogonality constraints which we prove in this paper in the case of positroids.  This approach automatically removes the lineality space, for regular subdivisions.  In sum, our approach suggests the possibility of extending our results to the context of polypositroids \cite{LamPostnikov2018}, and beyond, to generalized permutohedra.

Indeed, taking weighted arrangements of blades of all cyclic orders one will encounter novel linear relations of quasi-shuffle type among their indicator functions \cite{EarlyBlades}.  On the other hand, one could study arrangements of multiple copies of a blade with a fixed cyclic order $(1,2,\ldots, n)$.  This study was initiated in \cite{Early19WeakSeparationMatroidSubdivision}, where it was shown that the set of blade arrangements on the vertices of a hypersimplex $\Delta_{k,n}$ which induce matroidal (in fact positroidal) subdivisions, is in bijection with weakly separated collections of $k$-element subsets of $\{1,\ldots, n\}$.  We generalize the result from \cite{Early19WeakSeparationMatroidSubdivision}: we show that the set of matroidal \textit{weighted} blade arrangements is isomorphic to the positive tropical Grassmannian (modulo the lineality subspace).

Prior to this, in \cite{CEGM2019} CEGM discovered and generalized to higher dimensional projective spaces the Cachazo-He-Yuan (CHY) formalism \cite{CHY2014} for a particular tree-level scattering amplitude, the biadjoint scalar $m^{(2)}(\alpha,\beta)$ for $n$ cycles $\alpha,\beta$, and they discovered moreover a connection to the tropical Grassmannian.  The stucture of the CEGM generalized biadjoint scalar scattering amplitude \cite{CEGM2019} $m^{(k)}(\alpha,\alpha)$ has subsequently been computed symbolically and numerically with a variety of related methods: using certain collections and arrays of metric trees, called generalized Feynman diagrams \cite{BC2019,CGUZ2019}; using cluster algebra mutations to map the set of maximal cones of the (nonnegative) tropical Grassmannian \cite{Drummond2019A,Drummond2019B,Drummond2020,HenkePapa2019}; using matroid subdivisions and matroidal blade arrangements \cite{Early19WeakSeparationMatroidSubdivision,Early2019PlanarBasis}; Minkowski sums of Newton polytopes of a positive parametrization of the nonnegative Grassmannian \cite{AHL2019Stringy}; codimension 1 limit configurations of points in the moduli space $X(k,n)$ \cite{HeRenZhang2020}; and by direct tabulation of compatible sets of maximal cells in (regular) positroidal subdivisions of $\Delta_{k,n}$, in \cite{LPW2020}.  In \cite{SoftTheoremtropGrassmannian} a soft factorization theorem for $m^{(k)}(\alpha,\alpha)$ was proved for $k\ge 3$.

Evidence was given in \cite{CEGM2019} that one can access more of the tropical Grassmannian, beyond the positive tropical Grassmannian, by generalizing the integrand beyond the usual cyclic product of $k\times k$ minors known as the generalize Parke-Taylor factor, see Example \ref{example: nonplanar subdivision} and Appendix \ref{subsec:generalizedBiadjointScalar}.  The conjecture was that when $\alpha = \beta$, then $m^{(k)}(\alpha,\alpha)$ is a sum of rational functions which are in bijection with the maximal cones in $\text{Trop}^+ G(k,n)$.  In \cite{BC2019,CGUZ2019} these were calculated explicitly by taking the Laplace transform of the space of generalized Feynman diagrams, that is collections of metric trees for $m^{(3)}(\alpha,\alpha)$ and then arrays of metric trees for $m^{(4)}(\alpha,\alpha)$, subject to certain compatibility conditions on the metrics.  The study of collections of metric trees was initiated in \cite{Hermann How to draw}.

Our main technical result is Lemma \ref{lem:posDressToBlades}, which shows that the space of weighted matroidal blade arrangements $\mathcal{Z}_{k,n}$ on the vertices of the hypersimplex $\Delta_{k,n}$ is characterized by the positive tropical Plucker relations.  From this we deduce our main result in Theorem \ref{thm:tropGrass onto Zkn} that the positive tropical Grassmannian maps onto $\mathcal{Z}_{k,n}$ with fiber the $n$-dimensional so-called lineality subspace.  One novel feature of $\mathcal{Z}_{k,n}$ is the boundary map; in this way, weighted blade arrangements are forced to satisfy linear relations compatibly with the face poset of the hypersimplex.  In other words, $\mathcal{Z}_{k,n}$ embeds into the top component of a graded complex $\mathfrak{B}^\bullet_{k,n}$, such that on the faces of $\Delta_{k,n}$ one has separate positive tropical Grassmannians which are glued together by the boundary map!

In the concluding Section \ref{sec:building blocks} we go beyond general theory to introduce a hierarchy of elementary building blocks for all $\mathcal{Z}_{k,n}$ which is minimally closed with respect to the boundary maps $\partial: \mathfrak{B}^\ell_{k,n}\rightarrow \mathfrak{B}^{\ell+1}_{k,n}$.  We conjecture that every such weighted blade arrangement induces a coarsest positroidal subdivision of $\Delta_{k,n}$.  Here the boundary maps $\partial$ take weighted blade arrangements on faces of $\Delta_{k,n}$ of codimension $\ell$ to a sum of weighted blade arrangements on faces of codimension $\ell+1$ of $\Delta_{k,n}$.

In the Appendix, we recall the construction of $m^{(k)}(\alpha,\beta)$, culminating in the computation in Section \ref{subsec:generalizedBiadjointScalar} of several of the $n=6$ point amplitudes $m^{(3)}(\alpha,\beta)$.  Here one of the poles $-\eta_{246} +\eta_{124}+\eta_{256}+\eta_{346}$ appearing in $m^{(3)}((1,2,\ldots, 6),(1,2,5,6,3,4))$ can easily be recognized to belong to the family in Section \ref{sec:building blocks}.  The whole expression is then recognized as a (collection of) coarsest weighted blade arrangements which generate one of the bipyramids in $\text{Trop}_+ G(3,6)$.

\newpage

\section{Blades and positroidal subdivisions}\label{sec:Permutohedral blades and alcoved polytopes}
Let $\mathcal{H}_{k,n}$ be the affine hyperplane in $\mathbb{R}^n$ where $\sum_{i=1}^n x_i=k$.  For integers $1\le k\le n-1$, denote by $\Delta_{k,n} = \left\{x\in \lbrack 0,1\rbrack^n:\sum_{j=1}^n x_j=k \right\}$ the $k^\text{th}$ hypersimplex of dimension $n-1$.  For a subset $J\subseteq \{1,\ldots, n\}$, denote $x_J = \sum_{j\in J} x_j$, and similarly for basis vectors, $e_J = \sum_{j\in J} e_j$.  Denote $J_i = \{i,i+1,\ldots, i+k-1\}$.  Then $e_{J_1},\ldots, e_{J_n}$ are the $n$ \textit{frozen} vertices of $\Delta_{k,n}$.  Call a vertex $e_J\in\Delta_{k,n}$ \textit{totally nonfrozen} if the set $J$ partitions $\{1,\ldots, n\}$ into exactly $k$ cyclic intervals.

For any subset $L\in\binom{\lbrack n\rbrack}{m}$ with $1\le m\le k$, define the face
$$\partial_L(\Delta_{k,n}) = \left\{x\in\Delta_{k,n}:  x_\ell = 1  \text{ for all } \ell\in L\right\}.$$
Up to translation, this is $\Delta_{k-\vert L\vert,n-\vert L\vert}$.  Denote by $\binom{\lbrack n\rbrack}{k}$ be the set of all $k$-element subsets of $\{1,\ldots, n\}$.

In \cite{OcneanuVideo}, A. Ocneanu introduced plates and blades, as follows.
\begin{defn}[\cite{OcneanuVideo}]\label{defn:blade}
	A decorated ordered set partition $((S_1)_{s_1},\ldots, (S_\ell)_{s_\ell})$ of $(\{1,\ldots, n\},k)$ is an ordered set partition $(S_1,\ldots, S_\ell)$ of $\{1,\ldots, n\}$ together with an ordered list of integers $(s_1,\ldots, s_\ell)$ with $\sum_{j=1}^\ell s_j=k$.  It is said to be of (hypersimplicial) type $\Delta_{k,n}$ if we have additionally $1\le s_j\le\vert S_j\vert-1 $, for each $j=1,\ldots, \ell$.  In this case we write $((S_1)_{s_1},\ldots, (S_\ell)_{s_\ell}) \in \text{OSP}(\Delta_{k,n})$, and we denote by $\lbrack (S_1)_{s_1},\ldots, (S_\ell)_{s_\ell}\rbrack$ the convex polyhedral cone in $\mathcal{H}_{k,n}$, that is cut out by the facet inequalities
	\begin{eqnarray}\label{eq:hypersimplexPlate}
	x_{S_1} & \ge & s_1 \nonumber\\
	x_{S_1\cup S_2} & \ge & s_1+s_2\nonumber\\
	& \vdots & \\
	x_{S_1\cup\cdots \cup S_{\ell-1}} & \ge & s_1+\cdots +s_{\ell-1}.\nonumber
	\end{eqnarray}
	These cones were studied as \textit{plates} by Ocneanu.
	Finally, the \textit{blade} $(((S_1)_{s_1},\ldots, (S_\ell)_{s_\ell}))$ is the union of the codimension one faces of the complete simplicial fan formed by the $\ell$ cyclic block rotations of $\lbrack (S_1)_{s_1},\ldots(S_\ell)_{s_\ell},\rbrack$, that is
	\begin{eqnarray}\label{eq: defn blade}
	(((S_1)_{s_1},\ldots, (S_\ell)_{s_\ell})) = \bigcup_{j=1}^\ell \partial\left(\lbrack (S_j)_{s_j},(S_{j+1})_{s_{j+1}},\ldots, (S_{j-1})_{s_{j-1}}\rbrack\right).
	\end{eqnarray}
	Here $(((S_1)_{s_1},\ldots, (S_\ell)_{s_\ell}))$ is a \textit{hypersimplicial} blade of type $\Delta_{k,n}$ if  $$((S_1)_{s_1},\ldots, (S_\ell)_{s_\ell})\in\text{OSP}(\Delta_{k,n}).$$

\end{defn}

In what follows, let us denote for convenience the cone $\Pi_j = \lbrack j,j+1,\ldots, j-1\rbrack$, that is 
$$\Pi_j = \left\{t_1(e_1-e_2)+t_2(e_2-e_3)+\cdots + \widehat{t_j(e_j-e_{j+1})}+\cdots +t_n(e_n-e_1):t_1,\ldots, t_n\ge 0\right\}.$$
It is easy to check that the $n$ closed simplicial cones $\Pi_1,\ldots, \Pi_n$ form a complete simplicial fan, centered at the origin, in the hyperplane $\mathcal{H}_{0,n}$ in $\mathbb{R}^n$ where $\sum_{i=1}^n x_i=0$.  For an argument that uses the Minkowski algebra of polyhedral cones, see \cite{EarlyBlades}.

One finds that for any $x\in \mathcal{H}_{0,n}$, there is a unique subset $\{i_1,\ldots, i_m\} \in \binom{\lbrack n\rbrack}{m}$ with $m\ge 1$ such that $x$ is in the relative interior of $\Pi_{i_1}\cap \cdots \cap \Pi_{i_t}$.

\begin{rem}
	When there is no risk of confusion, depending on the context we shall use the notation $\lbrack (S_1)_{s_1},\ldots, (S_\ell)_{s_\ell}\rbrack$ for the cone in $\mathcal{H}_{k,n}$ or for the matroid polytope obtained by intersecting it with the hypersimplex $\Delta_{k,n}$.
\end{rem}

Let $\beta = ((1,2,\ldots, n))$ be the standard blade; as noted in \cite{Early19WeakSeparationMatroidSubdivision}, this is isomorphic to a tropical hyperplane.

Any point $v\in \mathbb{R}^n$ gives rise to a translation $\beta_v$ of $\beta$ by the vector $v$.  When $v = e_J$ is a vertex of a hypersimplex $\Delta_{k,n}$, then we write simply $\beta_J$.
\begin{rem}
	In what follows, starting in Section \ref{sec: hypersimplicial blade hypersimplex}, we shall also denote by $\beta_J$ the elements of a vector space $\mathfrak{B}_{k,n}$.
\end{rem}

\begin{defn}
	A matroid polytope $P$ is a subpolytope of a hypersimplex $\Delta_{k,n}$ such that every every edge of $P$ is parallel to an edge of $\Delta_{k,n}$, i.e. it is in a root direction $e_i-e_j$. A matroid polytope such that every facet is defined an equation of the form $x_i+x_{i+1}+\cdots +x_j \ge r_{ij}$ is called a positroid polytope.  Here the interval is understood to be cyclic modulo $n$.
\end{defn}

Let $y_i = x_1+\cdots +x_i$.  Following \cite{AlcovedPolytopes}, the set of affine hyperplanes of the form $y_{j}-y_{i} = x_{i+1}+x_{i+2}+\cdots +x_j = r_{ij}$ in $\mathcal{H}_{k,n}$, for positive integers $r_{ij}$, induces a triangulation of $\Delta_{k,n}$ into the Eulerian number $\mathcal{A}_{k-1,n-1}$ simplices, called alcoves.
\begin{defn}[\cite{AlcovedPolytopes}]
	A polytope in $\mathbb{R}^{n-1}$ is said to be \textit{alcoved} if its facet inequalities are of the form $b_{ij}\le y_i-y_j \le  c_{ij}$ for some collection of integer parameters $b_{ij}$ and $c_{ij}$.
\end{defn}

As noted in \cite{AlcovedPolytopes}, any alcoved polytope comes with a natural triangulation into Weyl alcoves.

\begin{defn}
	A matroid subdivision is a decomposition $\Pi_1\sqcup\cdots \sqcup \Pi_d$ of a hypersimplex $\Delta_{k,n}$ such that each pair of maximal cells $\Pi_i,\Pi_j$ intersects only on their common face, and such that each $\Pi_i$ is a matroid polytope.  
	
	A matroid subdivision is called \textit{positroidal} if every maximal cell $\Pi_i$ has its facets given by equations $x_{\alpha_{i+1}}+x_{\alpha_{i+2}}+\cdots+x_{\alpha_{i+m}}=r_{i,i+m}$ for some integers $r_{i,i+m}$, where $i\in\{1,\ldots, n\}$ and $1\le m \le n-2$, and some given cyclic order $\alpha=(\alpha_1,\alpha_2,\ldots, \alpha_n)$.  When $\alpha=(1,2,\ldots, n)$ then the subdivision is positroidal.
\end{defn}
Let $\mathcal{C} = \{C_1,\ldots, C_\ell\}$ and $\mathcal{D} = \{D_1,\ldots, D_m\}$ be two matroid subdivisions of $\Delta_{k,n}$; then $\mathcal{C}$ \textit{refines} $\mathcal{D}$ if every maximal cell $D_i$ of $\mathcal{D}$ is a union of maximal cells $C_j$ of $\mathcal{C}$.  Similarly, $\mathcal{C}$ \textit{coarsens} $\mathcal{D}$ if every maximal cell $C_i$ of $\mathcal{C}$ is a union of maximal cells $D_j$ of $\mathcal{D}$.

According to the standard construction, the set of matroid subdivisions form a poset with respect to refinement. 
\begin{defn}\label{defn:multisplit}
Let $d\ge 2$.  A $d$-split of an $m$-dimensional polytope $P$ is a coarsest subdivision $P = P_1\cup\cdots \cup P_d$ into $m$-dimensional polytopes $P_i$, such that the polytopes $P_i$ intersect only on their common faces, and such that 
$$\text{codim}(P_1\cap\cdots\cap P_d) = d-1.$$  If $d$ is not specified, then we shall use the term multi-split.  
\end{defn}

Recall that the Eulerian number $A_{k,n}$ is the number of permutations of $\{1,\ldots, n-1\}$ having $k-1$ descents.

\begin{thm}[\cite{Ocneanu2013}]\label{thm:decoratedOSPs Eulerian number}
	There is a bijection between decorated ordered set partitions of hypersimplicial type $\Delta_{k,n}$ and derangements\footnote{derangements are fixed-point free permutations.} of $\{1,\ldots, n\}$ with $k$ ascents and $n-k$ descents.  
	
	The number of decorated ordered set partitions $((S_1)_{s_1},\ldots, (S_\ell)_{s_\ell}) \in \text{OSP}(\Delta_{k,n})$ such that $1\in S_1$ is the Eulerian number $A_{k-1,n-1}$.
\end{thm}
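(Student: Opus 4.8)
The plan is to reduce both assertions to exponential generating function (EGF) identities that keep track of the block structure of a decorated ordered set partition, graded by $k=\sum_j s_j$. Throughout I read the statistic ``$k$ ascents, $n-k$ descents'' on a derangement $\sigma\in\symm_n$ as $k=\#\{i:\sigma(i)>i\}$ (excedances) and $n-k=\#\{i:\sigma(i)<i\}$ (deficiencies); as $\sigma$ has no fixed points these partition $\{1,\dots,n\}$ and sum to $n$, matching the claim. A block of size $m\ge 2$ carries decorations $s\in\{1,\dots,m-1\}$ and so contributes $w_m(t)=t+t^2+\cdots+t^{m-1}=\frac{t^m-t}{t-1}$; hence the EGF of a single decorated block is
\[
B(x,t)=\sum_{m\ge 2}w_m(t)\,\frac{x^m}{m!}=\frac{1}{t-1}\bigl(e^{tx}-t\,e^{x}+(t-1)\bigr).
\]
Since a decorated ordered set partition is an ordered \emph{sequence} of such blocks, the full EGF is $\Phi(x,t)=\tfrac{1}{1-B(x,t)}$, and a short computation gives the clean form $1-B(x,t)=\frac{t\,e^{x}-e^{tx}}{t-1}$, so that $\Phi(x,t)=\frac{t-1}{t\,e^{x}-e^{tx}}$.

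For Part 1, I would match $\Phi$ to the excedance generating function of derangements. Let $A_m(t)=\sum_{\sigma\in\symm_m}t^{\#\{i:\sigma(i)>i\}}$ be the Eulerian polynomial, with classical EGF $\mathcal A(x,t)=\sum_m A_m(t)\frac{x^m}{m!}=\frac{t-1}{t-e^{(t-1)x}}$, and let $d_m(t)$ be the same sum restricted to the derangements $D_m\subseteq\symm_m$. Splitting a permutation into its set of fixed points (each a non-excedance, contributing $t^0$) and a derangement on the complement gives the EGF factorization $\mathcal A=e^{x}\sum_m d_m(t)\frac{x^m}{m!}$, whence $\sum_m d_m(t)\frac{x^m}{m!}=e^{-x}\frac{t-1}{t-e^{(t-1)x}}$. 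Clearing $e^{x}$ in numerator and denominator shows this equals $\Phi(x,t)$ exactly. Extracting $[t^kx^n/n!]$ proves that decorated ordered set partitions of type $\Delta_{k,n}$ and derangements of $\{1,\dots,n\}$ with $k$ excedances are equinumerous for every $k$; specializing $t=1$ recovers $\Phi(x,1)=\frac{e^{-x}}{1-x}$, the derangement EGF, as a sanity check.

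For Part 2, I would use the pointed version of the same computation. Imposing $1\in S_1$ means peeling off the first block as a set containing the marked letter $1$, followed by an arbitrary decorated ordered set partition on the remaining letters, i.e.\ $G_n(t)=\sum_{m=2}^{n}\binom{n-1}{m-1}w_m(t)\,d_{n-m}(t)$; at the level of EGFs (with $\mathcal G=\sum_n G_n(t)\frac{x^n}{n!}$) this reads $\partial_x\mathcal G=(\partial_xB)\,\Phi$. Using $\partial_xB=\frac{t(e^{tx}-e^{x})}{t-1}$ and the formula for $\Phi$ gives
\[
\partial_x\mathcal G=\frac{t\,(e^{tx}-e^{x})}{t\,e^{x}-e^{tx}}=t\bigl(\mathcal A(x,t)-1\bigr),
\]
the last equality again after clearing $e^{x}$. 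Integrating (with $\mathcal G(0,t)=0$) yields $G_n(t)=t\,A_{n-1}(t)$ for $n\ge 2$, so the number of decorated ordered set partitions of type $\Delta_{k,n}$ with $1\in S_1$ is $[t^{k-1}]A_{n-1}(t)$, which — by the equidistribution of descents and excedances — is the number of permutations of $\{1,\dots,n-1\}$ with $k-1$ descents, i.e.\ the Eulerian number $A_{k-1,n-1}$.

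The main obstacle is the word \emph{bijection} in Part 1: the EGF argument yields equinumerosity cleanly but not an explicit, statistic-preserving map. To upgrade it I would construct the bijection recursively, peeling off the block containing a fixed reference letter and sending the pair (block, decoration $s_j$) to a prescribed insertion slot of that letter in the derangement, arranged so that $s_j$ records precisely the number of excedances created within the block; injectivity and surjectivity then follow by induction on the number of blocks. The delicate point is checking that this local rule preserves the \emph{global} excedance count rather than merely the cardinalities, and this is where the real work lies — the generating function identity above then serves as the term-by-term bookkeeping that confirms the statistics can indeed be matched.
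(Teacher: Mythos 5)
The paper never proves this statement: it is imported wholesale from \cite{Ocneanu2013} and is used downstream only through its enumerative consequences (Corollary \ref{cor:hypersimplicial blades count} and the multi-split count that follows it). So there is no internal proof to compare against, and your argument stands as an independent, self-contained verification. It is correct. The algebra checks out: $B(x,t)=\frac{1}{t-1}\left(e^{tx}-te^{x}+(t-1)\right)$, hence $\Phi=(1-B)^{-1}=\frac{t-1}{te^{x}-e^{tx}}$; the classical Eulerian EGF $\mathcal{A}(x,t)=\frac{t-1}{t-e^{(t-1)x}}$ together with the fixed-point factorization $\mathcal{A}=e^{x}\mathcal{D}$ gives $\mathcal{D}=e^{-x}\mathcal{A}=\frac{t-1}{te^{x}-e^{tx}}=\Phi$; and $\partial_x\mathcal{G}=(\partial_x B)\Phi=\frac{t\left(e^{tx}-e^{x}\right)}{te^{x}-e^{tx}}=t\left(\mathcal{A}-1\right)$, so $G_n(t)=tA_{n-1}(t)$. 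Spot checks agree: for $\Delta_{2,4}$ both sides of Part 1 equal $7$ (seven decorated OSPs, seven derangements of $\{1,2,3,4\}$ with two excedances), and Part 2 gives $4$, the number of permutations of $\{1,2,3\}$ with one descent.

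Two judgments on the points you flag. First, the ``bijection'' worry is not actually a gap for the statement as written: both sides are finite sets, and your identity matches their cardinalities for every fixed $(k,n)$, which for finite sets is logically equivalent to the existence of a statistic-preserving bijection. An explicit canonical map is what \cite{Ocneanu2013} presumably supplies, but neither the statement nor anything this paper does with it requires one, so your recursive construction sketch can be dropped rather than completed. Second, your reading of ``$k$ ascents and $n-k$ descents'' as excedances and deficiencies is the right one---it is the only reading under which the two statistics of a derangement partition all $n$ positions---and your conclusion ``permutations of $\{1,\ldots,n-1\}$ with $k-1$ descents'' is the interpretation of $A_{k-1,n-1}$ that the paper actually uses (the paper's own definitional sentence for $A_{k,n}$ carries an off-by-one shift inconsistent with its later usage, but your landing point is the consistent one).
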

In Corollary \ref{cor:hypersimplicial blades count} we are not including the trivial blade $((12\cdots k)) = \Delta_{k,n}$ (which is in fact 0 in the vector space $\mathcal{B}_{k,n}$, introduced in Section \ref{sec: hypersimplex blade complex}), which induces the trivial subdivision of $\Delta_{k,n}$.  If we include that then the number jumps by one, to the Eulerian number exactly.
\begin{cor}\label{cor:hypersimplicial blades count}
	There are exactly $A_{k-1,n-1}-1$ hypersimplicial blades of type $\Delta_{k,n}$.
\end{cor}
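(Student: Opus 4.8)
The plan is to count hypersimplicial blades by passing through the count of decorated ordered set partitions with $1\in S_1$ supplied by Theorem \ref{thm:decoratedOSPs Eulerian number}, and then to subtract the single trivial blade.

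The first step is to note that the blade $(((S_1)_{s_1},\ldots,(S_\ell)_{s_\ell}))$ is, by its definition \eqref{eq: defn blade} as the union of codimension-one faces taken over all $\ell$ cyclic block rotations of the plate, invariant under cyclically rotating the decorated ordered set partition. Consequently every hypersimplicial blade of type $\Delta_{k,n}$ arises from a full cyclic class of elements of $\text{OSP}(\Delta_{k,n})$, and I want to count these classes.

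The second step is to pick a canonical representative in each class. Since the blocks $S_1,\ldots,S_\ell$ are nonempty and pairwise disjoint, the element $1$ belongs to exactly one block; rotating that block into the first position produces a representative with $1\in S_1$, and this representative is unique because the $\ell$ rotations of a decorated ordered set partition with distinct blocks are themselves distinct. This identifies the cyclic classes with the decorated ordered set partitions in $\text{OSP}(\Delta_{k,n})$ having $1\in S_1$, of which there are exactly $A_{k-1,n-1}$ by Theorem \ref{thm:decoratedOSPs Eulerian number}. Among these representatives, the unique single-block partition $(\{1,\ldots,n\}_k)$ has an empty system of facet inequalities in \eqref{eq:hypersimplexPlate}, so its plate is all of $\mathcal{H}_{k,n}$ and its blade is the trivial subdivision of $\Delta_{k,n}$ (the element $0$ of $\mathcal{B}_{k,n}$); excluding it leaves $A_{k-1,n-1}-1$.

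The step I expect to require the most care is verifying that this correspondence is genuinely injective on geometric blades, i.e. that two decorated ordered set partitions which are not cyclic rotations of one another never define the same blade. This reduces to reconstructing the decorated ordered set partition, up to a global rotation, from the blade: the maximal cones cut out by its complement are precisely the cyclic rotations of the plate, so reading off their facet normals and the integers $s_j$ recovers the blocks and their decorations. I expect this reconstruction to follow from the structural description of plates and blades in \cite{EarlyBlades,Early19WeakSeparationMatroidSubdivision}; it is what upgrades the correspondence from a surjection onto blades to a bijection, and hence what makes the count exact.
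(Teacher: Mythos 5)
Your proof is correct and takes essentially the same route as the paper: the paper treats this as an immediate consequence of Theorem \ref{thm:decoratedOSPs Eulerian number}, identifying hypersimplicial blades with cyclic-rotation classes of decorated ordered set partitions (canonically represented by those with $1\in S_1$) and discarding the trivial single-block blade, exactly as you do. The injectivity step you flag as delicate is simply asserted ``by construction'' in the paper, so your sketch of reconstructing the decorated ordered set partition from the blade only supplies detail the paper omits.
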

As a second immediate Corollary we have the following enumeration of multi-split matroidal subdivisions of $\Delta_{k,n}$.
\begin{cor}
	There are $A_{k-1,n-1}-1$ (nontrivial) multi-split matroidal subdivisions.  
\end{cor}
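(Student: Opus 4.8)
The plan is to promote the blade count of Corollary~\ref{cor:hypersimplicial blades count} to a count of subdivisions by exhibiting a bijection between nontrivial hypersimplicial blades of type $\Delta_{k,n}$ and multi-split matroidal subdivisions of $\Delta_{k,n}$, matching a blade with $\ell\ge 2$ blocks to an $\ell$-split. Once this bijection is in hand the statement is immediate: the unique blade omitted from Corollary~\ref{cor:hypersimplicial blades count} is the trivial one, $((12\cdots k))=\Delta_{k,n}$, whose single block $\ell=1$ yields the trivial one-cell subdivision, and this is not a $d$-split for any $d\ge 2$; hence the remaining $A_{k-1,n-1}-1$ blades correspond exactly to the nontrivial multi-split matroidal subdivisions.

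For the forward direction I would fix a blade $(((S_1)_{s_1},\ldots,(S_\ell)_{s_\ell}))$ with $\ell\ge 2$ and let $P_1,\ldots,P_\ell$ be the intersections with $\Delta_{k,n}$ of the $\ell$ cyclic block rotations of the plate appearing in \eqref{eq: defn blade}. These cells meet only along common faces of the fan and assemble into a matroidal subdivision. The essential point is that it is an $\ell$-split in the sense of Definition~\ref{defn:multisplit}: the common face $P_1\cap\cdots\cap P_\ell$ is precisely the locus where all plate inequalities \eqref{eq:hypersimplexPlate} degenerate to equalities, namely $x_{S_1\cup\cdots\cup S_j}=s_1+\cdots+s_j$ for $j=1,\ldots,\ell-1$. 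Since the $S_j$ are disjoint and nonempty, these $\ell-1$ functionals are linearly independent on $\mathcal{H}_{k,n}$, so the common face has codimension $\ell-1=d-1$. Coarseness holds because a single blade induces a coarsest matroidal subdivision: any proper coarsening would have to merge cells surrounding the central face and would destroy convexity of a maximal cell, so the subdivision is a genuine $\ell$-split.

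It then remains to see that blade $\mapsto$ subdivision is a bijection onto the multi-split matroidal subdivisions, and here injectivity is routine while surjectivity is the main obstacle. For injectivity, the blade is recovered from the subdivision as the union of its interior walls, and the decorated ordered set partition $((S_1)_{s_1},\ldots,(S_\ell)_{s_\ell})$ is read off from the facet data of the cells together with the cyclic order in which they surround the central face, so distinct blades give distinct subdivisions. The hard part will be surjectivity: given an arbitrary multi-split matroidal subdivision with $d$ maximal cells, one must show that these cells are exactly the $d$ cyclic block rotations of a single plate. The natural route is to exploit the defining codimension-$(d-1)$ common face, whose equalities take the form $x_{T_1}=r_1,\ldots,x_{T_{d-1}}=r_{d-1}$, and to argue that coarseness together with matroidality forces the supports to form a chain $S_1\subsetneq S_1\cup S_2\subsetneq\cdots\subsetneq S_1\cup\cdots\cup S_{\ell-1}$ arising from an ordered set partition of hypersimplicial type $\Delta_{k,n}$, which reconstructs the blade. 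This is precisely the step where the structural classification of multi-splits of hypersimplices is needed, in order to rule out matroidal $d$-splits whose cells are not the cyclic rotations of one plate.
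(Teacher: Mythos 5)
Your overall strategy coincides with the paper's: reduce the count to Corollary~\ref{cor:hypersimplicial blades count} via a bijection between nontrivial hypersimplicial blades and multi-split matroidal subdivisions. The paper, however, does not prove this bijection --- it simply invokes the result from \cite{Early19WeakSeparationMatroidSubdivision} that the multi-split subdivisions of $\Delta_{k,n}$ are \emph{exactly} those induced by hypersimplicial blades, and combines it with the Eulerian count of decorated ordered set partitions modulo cyclic block rotation. Your forward direction (a blade with $\ell\ge 2$ blocks induces an $\ell$-split) is essentially Theorem~\ref{thm: multisplit blade}, already available in the paper, so re-deriving it costs nothing; your injectivity argument via recovering the union of interior walls is also fine.

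The genuine gap is the surjectivity step, and you have flagged it yourself without closing it. The claim that for an arbitrary matroidal $d$-split of $\Delta_{k,n}$ the $d$ maximal cells must be the cyclic block rotations of a single plate $\lbrack (S_1)_{s_1},\ldots,(S_d)_{s_d}\rbrack$ is precisely the content of the cited theorem, and it is the only nontrivial part of the corollary. Your proposed route --- ``coarseness together with matroidality forces the supports to form a chain arising from an ordered set partition of hypersimplicial type'' --- is a restatement of the conclusion, not an argument: nothing in your sketch explains why the codimension-$(d-1)$ common face must be cut out by equalities $x_{T}=r$ with the sets $T$ nested (equivalently, why the walls are the partial sums of a single cyclically ordered partition), nor why matroidal $d$-splits with some other incidence structure cannot occur. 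That classification of multi-splits of hypersimplices is exactly what \cite{Early19WeakSeparationMatroidSubdivision} supplies, so as written your attempt establishes the easy inequality (at least $A_{k-1,n-1}-1$ such subdivisions exist) but not the equality asserted in the corollary.
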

\begin{proof}
	This follows by combining Theorem \ref{thm:decoratedOSPs Eulerian number} with a result from \cite{Early19WeakSeparationMatroidSubdivision}, where it was shown that the multi-split subdivisions of $\Delta_{k,n}$ are exactly those subdivisions induced by hypersimplicial blades $(((S_1)_{s_1},\ldots, (S_\ell)_{s_\ell}))$.  But hypersimplicial blades are by construction in bijection with decorated ordered set partitions $((S_1)_{s_1},\ldots, (S_\ell)_{s_\ell})\in \text{OSP}(\Delta_{k,n})$ that have at least two blocks, modulo cyclic block rotation.
\end{proof}

Thus, when the (trivial) 1-split subdivision induced by the blade $((12\cdots n_k)) = \Delta_{k,n}$ is included, then there are exactly $A_{k-1,n-1}$ multi-split matroidal subdivisions of $\Delta_{k,n}$.

Let us recall some definitions and results from \cite{Early19WeakSeparationMatroidSubdivision}.

\begin{defn}
	A \textit{blade arrangement} is a superposition of a number of copies of $((1,2,\ldots, n))$ on the vertices of a given hypersimplex $\Delta_{k,n}$, where $2\le k\le n-2$.  A \textit{weighted} blade arrangement is a linear combination (often, but not always, with integer coefficients) of blades $\beta_J$.
\end{defn}
In the case that all numbers in a linear combination are nonnegative, then any weighted blade arrangement maps to a unique blade arrangement, obtained by setting all coefficient weights to 1.

\begin{thm}[\cite{Early19WeakSeparationMatroidSubdivision}]\label{thm: multisplit blade}
	Let $e_I$ be a vertex of $\Delta_{k,n}$ and fix a cyclic order, say without loss of generality $\alpha = (1,2,\ldots, n)$.  Then, the translated blade $((1,2,\ldots, n))_{e_{I}}$ induces a multi-split matroid subdivision of $\Delta_{k,n}$, with $\ell$ maximal cells, separated by a hypersimplicial blade,
	$$\left(((1,2,\ldots, n))_{e_{I}}\right)\cap \Delta_{k,n} = \left(((S_1)_{s_1},(S_2)_{s_2},\ldots, (S_\ell)_{s_\ell})\right)\cap \Delta_{k,n},$$
	where $((S_1)_{s_1},\ldots, (S_\ell)_{s_\ell})\in \text{OSP}(\Delta_{k,n})$ is determined by $\alpha$ and $e_I$, satisfies the property that $\ell$ equals the number of cyclic intervals in $I$.  In particular, the blade induces the trivial matroid subdivision, if and only if $I$ is a cyclic interval.
\end{thm}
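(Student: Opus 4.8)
The plan is to realize the standard blade $\beta$ as an honest tropical hyperplane in partial-sum coordinates and then read the subdivision of $\Delta_{k,n}$ off from the combinatorics of the vertex $e_I$. First I would introduce, for $x\in\mathcal{H}_{0,n}$, the partial sums $u_m = x_1+\cdots+x_m$ (so $u_0=u_n=0$, extended cyclically). Inverting the generating relation $x=\sum_{i\ne j}t_i(e_i-e_{i+1})$ for $\Pi_j$ gives $t_{j+r}=u_{j+r}-u_j$, so the facet conditions $t_i\ge 0$ become $u_j\le u_{j+r}$ for all $r$. Hence $x\in\Pi_j$ if and only if $u_j=\min_m u_m$, and $\beta$ is exactly the locus where $\min_m u_m$ is attained at least twice, which is precisely a tropical hyperplane in the $u_m$. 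Translating by $e_I$ replaces $u_m$ by $\tilde u_m := u_m - c_m$, where $c_m=\vert I\cap\{1,\dots,m\}\vert$, so that $\beta_I\cap\Delta_{k,n}$ is the locus where $\min_m\tilde u_m$ is attained at least twice.

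Next I would identify the maximal cells as the closed regions $C_{m^*}=\{x\in\Delta_{k,n}:\tilde u_{m^*}\le\tilde u_m\ \forall m\}$, indexed by those $m^*$ realized as the strict minimizer on an open set. The key observation is that the increment $\tilde u_m-\tilde u_{m-1}=x_m-[m\in I]$ is strictly negative for $m\in I$ and strictly positive for $m\notin I$ at interior points, so the cyclic sequence $\tilde u$ descends exactly across elements of $I$ and ascends exactly across non-elements. Its local minima therefore occur precisely at positions $m$ with $m\in I$ and $m+1\notin I$, the right endpoints of the maximal cyclic runs of $I$, and these number exactly the count $\ell$ of cyclic intervals of $I$. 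Since only a local minimum can be a strict global minimizer, this yields at most $\ell$ full-dimensional cells, and it shows immediately that $I$ is a single cyclic interval if and only if $\ell=1$, i.e. if and only if the subdivision is trivial.

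It then remains to show all $\ell$ candidate cells really are full-dimensional and to match the separating hypersurface with a hypersimplicial blade. Writing the cyclic word of $I$ as $R_1G_1\cdots R_\ell G_\ell$ (runs $R_b\subseteq I$, gaps $G_b$ disjoint from $I$, all nonempty since $2\le k\le n-2$), the depths $v_b:=\tilde u_{p_b}$ at the run-endpoints $p_b$ satisfy $v_{b+1}-v_b=A_b-D_{b+1}$, where $A_b=\sum_{m\in G_b}x_m\in(0,\vert G_b\vert)$ and $D_b=\sum_{m\in R_b}(1-x_m)\in(0,\vert R_b\vert)$. Because the $A_b$ and $D_b$ involve disjoint coordinate blocks, I can prescribe them independently subject only to $\sum_b A_b=\sum_b D_b$, and since each interval $(-\vert R_{b+1}\vert,\vert G_b\vert)$ contains $0$, assigning depths $v_b=\theta$ for $b\ne a$ and $v_a=0$ with $\theta$ small realizes $p_a$ as the strict global minimizer; hence every cell is full-dimensional and there are exactly $\ell$. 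The same independence shows the $\ell-1$ equations $\tilde u_{p_1}=\cdots=\tilde u_{p_\ell}$ are linearly independent, so their common face has codimension $\ell-1$ and the subdivision is a multi-split. Finally each inequality $\tilde u_{m^*}\le\tilde u_m$ rewrites as a cyclic-interval bound $x_{m^*+1}+\cdots+x_m\ge\vert I\cap\{m^*+1,\dots,m\}\vert$, so the cells are positroid polytopes, and collecting the blocks $S_b=G_b\cup R_{b+1}$ with decorations $s_b=\vert R_{b+1}\vert$ recovers a decorated ordered set partition in $\text{OSP}(\Delta_{k,n})$ whose plate inequalities \eqref{eq:hypersimplexPlate} are exactly these, identifying $\beta_I\cap\Delta_{k,n}$ with $((S_1)_{s_1},\ldots,(S_\ell)_{s_\ell})\cap\Delta_{k,n}$.

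I expect the main obstacle to be the bookkeeping in this last step rather than any of the preceding structure: one must verify that the independently tuned ascents $A_b$ and descents $D_b$ genuinely decouple, so that all $\ell$ cells are simultaneously nonempty and the shared face attains the exact codimension $\ell-1$ required by the multi-split definition, and that the cyclic grouping of runs and gaps into $(S_b,s_b)$ reproduces the plate inequalities of \eqref{eq:hypersimplexPlate} on the nose rather than a cyclic relabeling of them. By contrast the coordinate computation $x\in\Pi_j\iff u_j=\min_m u_m$ and the local-minimum count are routine once the partial-sum picture is in place.
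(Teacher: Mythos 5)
The paper does not actually prove Theorem \ref{thm: multisplit blade}; it imports it from \cite{Early19WeakSeparationMatroidSubdivision}, so your argument can only be judged on its own merits. On that basis, most of it is sound and follows essentially the geometric route one would expect: the identification $x\in\Pi_j\iff u_j=\min_m u_m$ is correct (so $\beta$ is the locus where $\min_m u_m$ is attained twice), the descent/ascent analysis correctly places the local minima of the cyclic sequence $\tilde u$ at the right endpoints $p_1,\ldots,p_\ell$ of the cyclic runs of $I$, your decoupled choice of the ascent totals $A_b$ and descent totals $D_b$ does show that all $\ell$ cells are full-dimensional and that the common face has codimension exactly $\ell-1$, and the matching of the inequalities $\tilde u_{p_a}\le\tilde u_m$ with the plate inequalities \eqref{eq:hypersimplexPlate} for $S_b=G_b\cup R_{b+1}$, $s_b=\vert R_{b+1}\vert$ is the right bookkeeping (the inequalities at positions $m$ interior to a run or gap are redundant given $0\le x_i\le 1$).

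The genuine gap is the sentence ``each inequality $\tilde u_{m^*}\le\tilde u_m$ rewrites as a cyclic-interval bound \ldots, so the cells are positroid polytopes.'' By the paper's own definition, a positroid polytope is a \emph{matroid} polytope whose facets are cyclic-interval inequalities; having cyclic-interval facets alone does not imply matroidality, and the inference as stated is false in general. For example, in $\Delta_{2,4}$ the polytope cut out by $x_1+x_2\ge 1$ and $x_2+x_3\ge 1$ is alcoved but has the non-lattice vertex $\left(\tfrac12,\tfrac12,\tfrac12,\tfrac12\right)$, so it is not a matroid polytope. What rescues your cells is structure you observed but did not use: for a fixed cell $C_{p_a}$ all the defining intervals are anchored at $p_a$, i.e.\ they form a nested chain $(p_a,m]$. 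Hence the constraint matrix (chain of intervals, box constraints, coordinate sum) is totally unimodular, so $C_{p_a}$ is a lattice polytope whose vertices are exactly the points $e_B$ with $\vert B\cap(p_a,m]\vert\ge\vert I\cap(p_a,m]\vert$ for all $m$; these sets $B$ are the bases of a cyclically shifted Schubert (nested) matroid, and either the exchange axiom or the Gelfand--Goresky--MacPherson--Serganova edge criterion then identifies $C_{p_a}$ as its matroid polytope. Without some such step, the claim ``matroid subdivision'' --- which is the heart of the theorem --- is unproven. A secondary, more minor omission: Definition \ref{defn:multisplit} requires a multi-split to be a \emph{coarsest} subdivision; this does follow from your codimension count (near a relative interior point of the common codimension-$(\ell-1)$ face the cells form a complete fan with $\ell$ maximal cones, and no union of a proper subset of at least two of them is convex, so no nontrivial coarsening exists), but it needs to be said.
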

\begin{defn}[\cite{LeclercZelevinsky}]\label{defn:weakly separated}
	Let $I,J\in\binom{\lbrack n\rbrack}{k}$ be given.  
	
	The subsets $I,J$ are \textit{weakly separated} if they satisfy the property that no four elements $i_1,i_2,j_1,j_2$ with $i_1, i_2\in (I\setminus J)$ and $j_1,j_2\in (J\setminus  I)$ have
	$$i_1<j_1<i_2<j_2$$
	or one of its cyclic rotations.
	
	If subsets $J_1,\ldots, J_m\in\binom{\lbrack n\rbrack}{k}$ are pairwise weakly separated, then $\mathcal{C} = \{J_1,\ldots, J_m\}$ is called a \textit{weakly separated collection}.
\end{defn}
In the usual geometric interpretation for $k$-element subsets, c.f. \cite{WeakSeparationPostnikov}, $I$ and $J$ are weakly separated if there exists a chord separating the sets $I\setminus J$ and $J\setminus I$ when drawn on a circle.  Identifying each $k$-element subset $J$ of $\{1,\ldots, n\}$ with the vertex $e_J$ gives rise to a notion of weak separation for arrangements of vertices of the form $\{e_{I_1},\ldots, e_{I_m}\}\subset \Delta_{k,n}$.

\begin{thm}[\cite{Early19WeakSeparationMatroidSubdivision}]\label{thm: weakly separated matroid subdivision blade}
	Given a collection of vertices $e_{I_1},e_{I_2},\ldots, e_{I_m}\in\Delta_{k,n}$, the subdivision of $\Delta_{k,n}$ that is induced by the blade arrangement 
	$$\{((1,2,\ldots, n))_{e_{I_1}},((1,2,\ldots, n))_{e_{I_2}},\ldots, ((1,2,\ldots, n))_{e_{I_m}}\}$$
	is matroidal (in fact positroidal) if and only if $\{I_1,\ldots, I_m\}$ is weakly separated.
\end{thm}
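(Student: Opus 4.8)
The plan is to reduce global matroidality of the arrangement to a purely local, pairwise condition on the three-dimensional octahedral faces $\Delta_{2,4}$ of $\Delta_{k,n}$, and then to match that local condition with weak separation as in Definition \ref{defn:weakly separated}. First I would record that each translated blade $((1,2,\ldots,n))_{e_{I_a}}$ cuts out a regular (coherent) subdivision of $\Delta_{k,n}$, since the blade is the tropical hypersurface of a convex piecewise-linear height function $h_{I_a}$ on the vertex set $\binom{[n]}{k}$. Consequently the arrangement, being a superposition of the blades, induces the regular subdivision associated to the sum $h=\sum_a h_{I_a}$, which is precisely the common refinement of the individual blade subdivisions. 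By the standard theory of matroid subdivisions, a regular subdivision of $\Delta_{k,n}$ is matroidal exactly when $h$ is a tropical Pl\"ucker vector, i.e. satisfies the three-term tropical Pl\"ucker relations; equivalently, its restriction to every octahedral face $F(S;a,b,c,d)$ — the copy of $\Delta_{2,4}$ obtained by fixing $x_s=1$ for $s\in S\in\binom{[n]}{k-2}$ and $x_t=0$ for $t\notin S\cup\{a,b,c,d\}$ — is itself a matroid subdivision.

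Since every matroid subdivision of the octahedron $\Delta_{2,4}$ is either trivial or one of its three splits, the problem reduces to understanding how each blade restricts to each such face. The key structural observation is that, because the cyclic order $(1,\ldots,n)$ is fixed, the walls of each blade subdivision are of cyclic-interval type $x_{[i,j]}=\mathrm{const}$ by Theorem \ref{thm: multisplit blade}. Intersecting such a wall with $F(S;a,b,c,d)$ (with $a<b<c<d$) replaces $x_{[i,j]}$ by $x_{[i,j]\cap\{a,b,c,d\}}$, and since $[i,j]\cap\{a,b,c,d\}$ is a cyclic subinterval of $\{a,b,c,d\}$, the only nontrivial size-two cuts that can occur are $x_a+x_b=\mathrm{const}$ and $x_b+x_c=\mathrm{const}$; the crossing split $x_a+x_c=\mathrm{const}$ can never arise from a single blade, because $\{a,c\}$ is not a cyclic subinterval. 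Thus each blade restricts to the trivial subdivision or to one of the two \emph{planar} splits $\sigma_{ab},\sigma_{bc}$.

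As $\sigma_{ab}$ and $\sigma_{bc}$ are an incompatible pair of splits of $\Delta_{2,4}$, the restriction of the whole arrangement to $F$ is matroidal if and only if no two of the blades induce the two \emph{different} planar splits there. Because ``all induced planar splits on a fixed octahedron agree'' is a pairwise condition, and distinct octahedron splits are pairwise incompatible, I conclude that global matroidality of the arrangement is equivalent to matroidality of the common refinement of every pair $\{\beta_{e_{I_a}},\beta_{e_{I_b}}\}$. Moreover the ``in fact positroidal'' refinement is then automatic: all walls have the cyclic-interval form $x_{[i,j]}=\mathrm{const}$, so as soon as a cell is a matroid polytope its facets already have the positroidal form, hence it is a positroid polytope.

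It remains to prove the pairwise statement: $\beta_{e_I}$ and $\beta_{e_J}$ have matroidal common refinement if and only if $I$ and $J$ are weakly separated. For the substantive direction I would start from a weak separation violation $i_1<j_1<i_2<j_2$ (up to cyclic rotation) with $i_1,i_2\in I\setminus J$ and $j_1,j_2\in J\setminus I$, set $\{a,b,c,d\}=\{i_1,j_1,i_2,j_2\}$, and choose a $(k-2)$-set $S\subseteq[n]\setminus\{a,b,c,d\}$ for which both blades activate a size-two wall on $F(S;a,b,c,d)$; reading off the cyclic-interval decompositions of $I$ and of $J$ relative to $\{a,b,c,d\}$ via Theorem \ref{thm: multisplit blade} then shows that $\beta_{e_I}$ induces $\sigma_{ab}$ while $\beta_{e_J}$ induces $\sigma_{bc}$, so the common refinement is non-matroidal. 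Conversely, if $I,J$ are weakly separated I would show that on every octahedral face the two induced planar splits coincide or one of them is trivial, using that weak separation forbids exactly the interleaving pattern needed to force opposite planar cuts. This pairwise correspondence — pinning down an admissible $S$ (already delicate when $|I\setminus J|>2$, where $S$ cannot be taken inside $I\cap J$) and verifying the induced split in both directions from the multisplit description — is the technical heart of the argument and the step I expect to be the main obstacle.
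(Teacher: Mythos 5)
This theorem is quoted in the present paper from \cite{Early19WeakSeparationMatroidSubdivision} without proof, so the only comparison target is the cited source; judged on its own terms, your proposal has a sound outer shell but a genuine hole exactly where the content of the theorem lives. The reduction machinery is correct: each blade is the nonlinearity locus of the piecewise-linear function $\rho_{I_a}(x)=h(x-e_{I_a})$, and since all these functions bend the same way (all concave, being minima of linear functions), the arrangement's subdivision is the regular subdivision of $\sum_a\rho_{I_a}$ and equals the common refinement of the individual multi-splits (uniform concavity is what rules out cancellation of bends and should be said explicitly); matroidality of a regular subdivision is detected on octahedral faces by the three-term tropical Pl\"ucker relations; Theorem \ref{thm: multisplit blade} forces all walls to be of cyclic-interval type, which rules out the crossing split, exactly as in Proposition \ref{prop: positive tropical Plucker relations}; and since the two surviving planar splits of a fixed octahedron are incompatible, matroidality of the whole arrangement reduces to matroidality of every pair. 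The positroidality upgrade is also fine, since every defining inequality of every cell is of cyclic-interval type.

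However, the pairwise statement --- $\beta_{e_I},\beta_{e_J}$ have matroidal common refinement if and only if $I,J$ are weakly separated --- \emph{is} the theorem, and your treatment of it is a plan rather than a proof, as you yourself concede (``the step I expect to be the main obstacle''). Two things are missing. First, for the direction ``not weakly separated $\Rightarrow$ not matroidal'' you must exhibit a $(k-2)$-set $S$ disjoint from the violating quadruple $\{i_1,j_1,i_2,j_2\}$ on which \emph{both} blades induce nontrivial and \emph{opposite} planar splits; you give no construction of $S$ and no proof that one exists. (Your own caveat is apt: for $I=\{1,3,5\}$, $J=\{2,4,6\}$ in $\Delta_{3,6}$ with quadruple $\{1,2,3,4\}$, one is forced to take $S\subset\{5,6\}$, disjoint from $I\cap J=\emptyset$, and the restriction must then be computed through the multi-split wall structure, where walls are only \emph{pieces} of the hyperplanes $x_{[i,j]}=\mathrm{const}$, so that some wall pieces degenerate to facets or to full-dimensional sets upon restriction.) Second, for the converse you need an explicit dictionary, in terms of $(I;S;a,b,c,d)$, recording which of the two planar splits (or the trivial subdivision) $\beta_{e_I}$ induces on $F(S;a,b,c,d)$ --- equivalently the sign of $\rho_I(e_S+e_{ab})+\rho_I(e_S+e_{cd})-\rho_I(e_S+e_{ad})-\rho_I(e_S+e_{bc})$, computable from the distance formula of Lemma \ref{lem: distance walk} --- followed by an argument that a disagreement between the two blades forces the interleaving pattern of Definition \ref{defn:weakly separated}. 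Neither computation appears, and until it does, weak separation plays no role in your argument; establishing that dictionary is the entire substance of the result you are trying to prove.
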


When a blade arrangement induces a positroidal subdivision on $\Delta_{k,n}$, call it matroidal.  

Similarly, call a \textit{weighted} blade arrangement matroidal when it induces on every second hypersimplicial face a positroidal subdivision.  See also Definition \ref{defn:Xkn YKn ZKn} and Corollary \ref{cor: regular positroidal subdivision blades}.   Our main result in Theorem \ref{thm:tropGrass onto Zkn} is to show that the set of matroidal weighted blade arrangements is isomorphic to the positive tropical Grassmannian modulo the n-dimensional lineality subspace.

Let us consider what may happen when weights are allowed to be negative; this is a new phenomenon first occurring for $k=3$ hypersimplices.  Negative coefficients in a weighted blade arrangement are possible so long as on each boundary copy of $\Delta_{2,n-(k-2)}$ in $\Delta_{k,n}$ all coefficients become nonnegative.  This happens for the first time for the following weighted blade arrangement on $\Delta_{3,6}$:
$$-\beta_{2,4,6} + \beta_{1,2,4}+\beta_{3,4,6}+\beta_{2,5,6},$$
where as in the sequel we use the notation $\beta_{J} = ((1,2,\ldots, n))_{e_J}$.  This can be seen to induce a 2-split on each of the six facets of $\Delta_{3,6}$, where the negative terms completely cancel and we get
$$\beta^{(1)}_{2,4}+\beta^{(2)}_{1,4}+\beta^{(3)}_{4,6} + \beta^{(4)}_{3,6} + \beta^{(5)}_{2,6} + \beta^{(6)}_{2,5}.$$
For instance, $\beta^{(6)}_{2,5}$ induces the positroidal subdivision of the face $\left\{x\in\Delta_{3,6}: x_6=1 \right\} \simeq \Delta_{2,5}$ with the two maximal cells separated by the (hypersimplicial) blade
$$((12_1 345_1))^{(6)} = \left\{x\in \mathbb{R}^6: x_6=1,\ x_1+x_2 = x_3+x_4+x_5 =1\right\}.$$

\section{Hypersimplicial Vertex Space $\mathbb{R}^{\binom{n}{k}}$ and Kinematic Space}\label{sec: height functions and kinematic space}
Let $\left\{e^J: J \in \binom{\lbrack n\rbrack}{k} \right\}$ be the standard basis for $\mathbb{R}^{\binom{n}{k}}$.

\begin{defn}
	The \textit{kinematic space} for the hypersimplex $\Delta_{k,n}$ is the subspace $\mathcal{K}_{k,n}$ of $\mathbb{R}^{\binom{n}{k}}$ defined by
\begin{eqnarray}\label{eq: kinematic space}
\mathcal{K}_{k,n} = \left\{(s_J)\in\mathbb{R}^{\binom{n}{k}}: \sum_{J\in\binom{\lbrack n\rbrack}{k}:\ J\ni a} s_{J}=0 \text{ for each } a=1,\ldots, n\right\}.
\end{eqnarray}
\end{defn}
According to the standard construction in combinatorial geometry, see for instance \cite{TriangulationsBook}, any height function over the vertices of a polyhedron $P$ defines a continuous, piecewise linear surface over $P$, which in turn induces a \textit{regular} subdivision of $P$, obtained by projecting down onto $P$ the folds in the surface.  When $P=\Delta_{k,n}$ the height function takes values in $\mathbb{R}^{\binom{n}{k}}$; as noted in the proof of Proposition \ref{prop:frozen constant slope}, the relations cutting out the kinematic space as a subspace of $\mathbb{R}^{\binom{n}{k}}$ vanish exactly on the space of continuous, piecewise-linear surfaces that have constant slope over the whole hypersimplex\footnote{Such spaces have been studied in the work of Lafforgue, for instance \cite{Lafforgue}.}.  

In this paper we are concerned with a particular subset of the kinematic space which is defined by restricting to height functions that induce regular subdivisions where all maximal cells are a particular kind of matroid polytope, such that each octahedral face in say 
$$\left\{x\in \lbrack 0,1\rbrack^n: x_a+x_b+x_c+x_d=2\right\}\cap \Delta_{k,n}$$
is subdivided in a way that is compatible with a corresponding cyclic order inherited from a given cyclic order $\alpha=(1,2,\ldots, n)$.  Namely, over each octahedron the folds of the surface should project onto either $x_a+x_b = x_c+x_d=1$ or $x_a+x_d=x_b+x_c=1$ where $a<b<c<d$ cyclically.  Such regular subdivisions are of course exactly the positroidal subdivisions\footnote{For results related to this characterization see \cite{arkani2020positive,Early19WeakSeparationMatroidSubdivision,LPW2020,SpeyerWilliams2020}.}.

Let us now describe how to construct the configuration space of such height functions; to this end we introduce a distinguished set of height functions which form a basis of linear functions on $\mathcal{K}_{k,n}$, where each induces elements of a family of matroid subdivisions, the positroidal \textit{multi-splits}, in terms of which any regular positroidal subdivision has a particularly convenient expansion.

Define a piecewise-linear function $\mathfrak{h}(x)$ on $\mathbb{R}^n$ by
$$h(x) = \min\{L_1(x),\ldots, L_n(x)\},$$
where
$$L_j = x_{j+1} + 2x_{j+2}+\cdots (n-1)x_{j-1}.$$
We shall restrict its domain to the hyperplane $\mathcal{H}_{0,n}$ where $x_1+\cdots +x_n=0$.
\subsection{Bases for $\mathbb{R}^{\binom{n}{k}}$}

Recall the notation $\Pi_j = \lbrack j,j+1,\ldots, j-1\rbrack$, that is 
$$\Pi_j = \left\{t_1(e_1-e_2)+t_2(e_2-e_3)+\cdots +t_n(e_n-e_1):t_1,\ldots, t_n\ge 0;\ t_j=0\right\}.$$
One can easily check that $h$ is linear on each $\Pi_j$.  In particular we have Proposition \ref{prop: linear A}.
\begin{prop}\label{prop: linear A}
	If $x\in \Pi_j$, then $\min\{L_1(x),\ldots, L_n(x)\} = L_j(x)$, hence
	$$h(x) = L_j(x).$$
\end{prop}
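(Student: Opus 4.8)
The plan is to prove the pointwise inequalities $L_j(x)\le L_m(x)$ for every $m=1,\dots,n$ and every $x\in\Pi_j$; since $h(x)=\min\{L_1(x),\dots,L_n(x)\}$ on $\mathcal{H}_{0,n}$, these inequalities give $h(x)=L_j(x)$ at once. Writing $\rho_i=e_i-e_{i+1}$ (indices read cyclically mod $n$), the cone $\Pi_j$ is generated by the $n-1$ roots $\{\rho_i:i\ne j\}$, which are linearly independent, so $\Pi_j$ is simplicial. Because a linear functional is nonnegative on a simplicial cone exactly when it is nonnegative on each extreme ray, it suffices to treat each difference $L_m-L_j$ as a linear functional on $\mathcal{H}_{0,n}$ and to control its values on the generators $\rho_i$.

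The computation is cleanest through an increment identity for consecutive functionals. The coefficient of $x_a$ in $L_m$ is $(a-m)\bmod n\in\{0,\dots,n-1\}$, so in passing from $L_m$ to $L_{m+1}$ every coefficient drops by $1$ except that of $x_m$, which jumps from $0$ up to $n-1$. Using $\sum_a x_a=0$ on $\mathcal{H}_{0,n}$ this collapses to the clean formula
\[
L_{m+1}(x)-L_m(x)=n\,x_m .
\]
Telescoping this identity then expresses every difference of the $L$'s as $n$ times a cyclic partial sum of the coordinates $x_a$.

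The second ingredient is the partial-sum description of $\Pi_j$ itself. Writing $x=\sum_{i\ne j}t_i\rho_i$ and comparing coefficients gives $x_i=t_i-t_{i-1}$, so that, under the convention $t_j=0$, the coordinate $t_i$ is exactly the cyclic partial sum $x_{j+1}+x_{j+2}+\cdots+x_i$. Hence $x\in\Pi_j$ is equivalent to the nonnegativity of precisely these partial sums. Combining the two ingredients, each telescoped difference $L_m-L_j$ is $n$ times one of the cyclic partial sums matching a defining condition of $\Pi_j$, hence nonnegative there, which identifies the minimizing functional and yields $h=L_j$ on $\Pi_j$.

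The step requiring genuine care — and the place where the argument can silently slip — is the cyclic index bookkeeping at the end: one must verify that the telescoping range produced by the increment identity lines up with the partial-sum range $x_{j+1}+\cdots+x_i$ that cuts out $\Pi_j$, so that the single extreme ray carrying a negative value of $L_m-L_j$ is exactly the root $\rho_j=e_j-e_{j+1}$ frozen out by the condition $t_j=0$. Aligning these two cyclic windows is what pins the minimizing index to the label $j$ of the cone; I expect this to be the main obstacle, since an off-by-one in the range would point the minimum at a neighboring functional. The remaining verifications — linear independence of the $\rho_i$, the coefficient bookkeeping behind the increment identity, and the cyclic consistency $L_{n+1}=L_1$ (immediate from $\sum_a x_a=0$) — are routine.
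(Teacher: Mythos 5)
Your two ingredients are both correct: on $\mathcal{H}_{0,n}$ one indeed has $L_{m+1}(x)-L_m(x)=n\,x_m$, and writing $x=\sum_{i\ne j}t_i(e_i-e_{i+1})$ with $t_j=0$ gives $t_i=x_{j+1}+x_{j+2}+\cdots+x_i$, so that cone is cut out by the cyclic partial sums beginning at $x_{j+1}$. The problem is the step you yourself flag as ``the main obstacle'' and never carry out: the two cyclic windows do \emph{not} line up. Telescoping your identity gives
$$L_m(x)-L_j(x)=n\left(x_j+x_{j+1}+\cdots+x_{m-1}\right),$$
a partial sum beginning at $x_j$, whereas the cone generated by $\{e_i-e_{i+1}:i\ne j\}$ only guarantees nonnegativity of sums beginning at $x_{j+1}$. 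On that cone $x_j=-t_{j-1}\le 0$, so $L_m-L_j=n(t_{m-1}-t_{j-1})$ can be negative. Concretely, the generator $x=e_{j-1}-e_j$ lies in this cone, and there $L_j(x)=n-1$ while $L_m(x)=-1$ for every $m\ne j$: on this ray $L_j$ is the unique \emph{maximum}. (For $n=3$, $j=3$: the point $x=e_1-e_3$ has $L_3(x)=1$ but $L_1(x)=-2$.) So on the cone you work with, the minimizing functional is $L_{j+1}$, not $L_j$, and the conclusion you assert is false; the proof has a genuine gap exactly where you anticipated one.

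The repair requires resolving an off-by-one inconsistency that is actually present in the paper's two descriptions of $\Pi_j$. Your telescoped differences $(L_m-L_j)/n$, for $m\ne j$, are precisely the partial sums $x_j+x_{j+1}+\cdots+x_i$ with $i\ne j-1$, and these cut out the plate $\lbrack j,j+1,\ldots,j-1\rbrack$, i.e.\ the cone generated by $\{e_i-e_{i+1}:i\ne j-1\}$. Under that reading of $\Pi_j$ (the one the notation $\Pi_j=\lbrack j,j+1,\ldots,j-1\rbrack$ denotes), your telescoping identity finishes the proof in one line: every difference $L_m-L_j$ is $n$ times a defining inequality of the cone, hence nonnegative. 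Under the displayed parametrization with $t_j=0$, which is the one you adopted, the correct conclusion is instead $h=L_{j+1}$ on $\Pi_j$; this shifted convention is also the one the paper itself uses later, where the proof of Lemma \ref{lem: distance walk} takes $t_n=0$ and concludes that $h$ evaluates to $L_1$. A complete proof must commit to one description of $\Pi_j$, verify the window alignment for that description, and, if the parametrized description is kept, shift the index in the conclusion accordingly.
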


Translating $h$ to the vertices of hypersimplex $\Delta_{k,n}$ gives rise to a collection of piecewise-linear functions $\rho_J(x) = h(x-e_J)$ for $x\in\Delta_{k,n}$, and restricting these to the vertices of $\Delta_{k,n}$ determines an (integer-valued) height function, which we shall encode by a vector in $\mathbb{Z}^{\binom{n}{k}}$.  

To summarize, let $\rho_J:\Delta_{k,n}\rightarrow \mathbb{R}$ be the translation of $h$ by vector $e_J$:
\begin{eqnarray}\label{eq:rho}
	\rho_J(x) = h(x-e_J).
\end{eqnarray}

Now put
\begin{eqnarray}\label{eqn:height function}
	\mathfrak{h}_{J}:=\sum_{e_I\in\Delta_{k,n}} \rho_J(e_I)e^I \in\mathbb{R}^{\binom{n}{k}}
\end{eqnarray}

\begin{prop}\label{prop:expansion h}
	Given a lattice point $x\in (\mathcal{H}_{0,n}\cap \mathbb{Z}^n)$, then there exist unique integers $t_{i_1},\ldots, t_{i_\ell}> 0$, such that 
	$$x = \sum_{j\in \left\{i_1,\ldots, i_\ell\right\}^c} t_j(e_j-e_{j+1}).$$
\end{prop}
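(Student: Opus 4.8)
The plan is to deduce the statement from the structure of the complete simplicial fan $\Pi_1,\dots,\Pi_n$ together with a unimodularity argument that promotes a representation with nonnegative \emph{real} coefficients to one with \emph{positive integer} coefficients. Write $v_i=e_i-e_{i+1}$ for $i=1,\dots,n$ (indices cyclic modulo $n$), so that the single linear relation among them is $\sum_{i=1}^n v_i=0$ and any $n-1$ of the $v_i$ form a basis of $\mathcal{H}_{0,n}$. First I would fix an arbitrary representation $x=\sum_{i=1}^n c_i v_i$; since the only relation is $\sum_i v_i=0$, the tuple $(c_1,\dots,c_n)$ is determined by $x$ up to adding a common constant to every entry. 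Consequently the set $S=\{i:c_i=\min_j c_j\}$ of minimizing indices is an invariant of $x$, and a short computation (shifting the constant so that the $i$-th coordinate vanishes) shows that $x\in\Pi_i$ exactly when $i\in S$. Thus $S$ is precisely the subset $\{i_1,\dots,i_\ell\}$ for which $x$ lies in the relative interior of $\Pi_{i_1}\cap\cdots\cap\Pi_{i_\ell}$, whose existence and uniqueness were already recorded above.

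Normalizing the common constant so that $\min_j c_j=0$, the minimum is attained exactly on $S$, giving $x=\sum_{j\in S^c}c_j v_j$ with $c_j>0$ for every $j\in S^c$; this is the asserted form, with the vanishing (omitted) coordinates indexed by $S=\{i_1,\dots,i_\ell\}$ and the support equal to $S^c$. It remains to establish integrality and uniqueness. For integrality I would pick any single index $i_0\in S$ and use the basis $\{v_j:j\neq i_0\}$ of $\mathcal{H}_{0,n}$. The key point is that this is in fact a $\mathbb{Z}$-basis of the root lattice $\mathbb{Z}^n\cap\mathcal{H}_{0,n}$: the standard simple roots $v_1,\dots,v_{n-1}$ are a $\mathbb{Z}$-basis of type $A_{n-1}$, and passing from $\{v_1,\dots,v_{n-1}\}$ to $\{v_j:j\neq i_0\}$ replaces $v_{i_0}$ by $v_n=-(v_1+\cdots+v_{n-1})$, a change of basis of determinant $\pm 1$. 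Hence the coordinates of the lattice point $x$ in this basis are integers; these coordinates are exactly the $c_j$ (with $c_j=0$ for the remaining indices of $S$), so $c_j\in\mathbb{Z}_{>0}$ for all $j\in S^c$, and we may set $t_j:=c_j$.

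For uniqueness, suppose $x=\sum_{j\in T^c}t_j v_j$ with $T\subseteq\{1,\dots,n\}$ and all $t_j>0$. Extending by $t_j=0$ for $j\in T$ produces a full representation of $x$ whose minimal coordinate value is $0$, attained exactly on $T$; by the invariance of the minimizing set established above, $T=S$. Once the support is pinned to $S^c$, the coefficients $t_j$ are the coordinates of $x$ in the basis $\{v_j:j\neq i_0\}$ and are therefore uniquely determined. I expect the only genuinely nonformal step to be the unimodularity claim in the second paragraph: everything else is bookkeeping layered on top of the fan structure already in hand, whereas there one must check that omitting an \emph{arbitrary} cyclic index $i_0$---not merely $v_n$---still yields a lattice basis, which is exactly what the determinant $\pm1$ computation guarantees.
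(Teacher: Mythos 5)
Your proof is correct, and it follows the same geometric skeleton as the paper's (the fan $\Pi_1,\ldots,\Pi_n$ and relative interiors of intersections), but it is genuinely more complete at the one point where the paper is silent. The paper's proof is a short appeal to structure: $x$ lies in the relative interior of a unique intersection $\Pi_{i_1}\cap\cdots\cap\Pi_{i_\ell}$ (a fact recorded just after the $\Pi_j$ are introduced), and since the complementary cone is simplicial of dimension $n-\ell$, the expansion of $x$ in its generators is unique with positive coefficients --- with integrality simply asserted. You instead fix a representation $x=\sum_i c_i v_i$ (writing $v_i=e_i-e_{i+1}$), note that the coefficient vector is well defined up to adding a multiple of $(1,\ldots,1)$, and characterize $x\in\Pi_j$ as ``$c_j$ is minimal''; this re-derives the fan membership fact rather than citing it, and your uniqueness step is the same observation run in reverse. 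The genuine added value is the unimodularity computation: simpliciality alone does not give integer coefficients for a lattice point, since the generators of a simplicial cone need not generate the ambient lattice, so your determinant-$\pm1$ verification that $\{v_j: j\neq i_0\}$ is a $\mathbb{Z}$-basis of $\mathbb{Z}^n\cap\mathcal{H}_{0,n}$ is exactly the justification the word ``integers'' in the statement requires. One cosmetic caveat: in your uniqueness paragraph, the claim that the minimum of the extended coefficient vector is $0$ and is attained exactly on $T$ presupposes $T\neq\emptyset$; representations with all $n$ coefficients positive do exist (add a multiple of $\sum_{j=1}^n v_j$), but they are not of the form asserted in the proposition, so nothing is lost --- just state the restriction explicitly.
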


\begin{proof}
	Given a point $x\in (\mathcal{H}_{0,n}\cap \mathbb{Z}^n)$ as above, then it is in the relative interior of some (maximal) intersection $\Pi_{i_1}\cap \cdots \cap \Pi_{i_\ell}$.  Taking $t_{i_1} = \cdots =t_{i_\ell}=0$ then $x$ is in the relative interior of a simplicial cone of dimension $n-\ell$, so we have a unique expansion 
	$$x = \sum_{j\in \left\{i_1,\ldots, i_\ell\right\}^c} t_j(e_j-e_{j+1})$$
	with integers $t_j>0$.
\end{proof}
We may now define for any pair of vertices $e_I,e_J\in \Delta_{k,n}$, an integer
$$d(e_I,e_J)= \sum_{j=1}^{n}t_j,$$ 
where we set $x = e_J-e_I$ in Proposition \ref{prop:expansion h}, noting that $t_j = 0$ for all $j\in \left\{i_1,\ldots, i_\ell\right\}$.  

Then $ d(e_I,e_J)$ is the smallest (positive) number of steps required to walk from $e_I$ to $e_J$, where each step has to be in one of the root directions $e_1-e_2,e_2-e_3,\ldots, e_n-e_1$.

Define a linear operator $\mathcal{L}:\mathbb{R}^{\binom{n}{k}} \rightarrow \mathbb{R}^{\binom{n}{k}}$ by extending by linearity the map
$$\mathcal{L}(e^J) = \sum_{M\in C_J}(-1)^{1+(M\cdot M) }e^{J_M},$$
where $C_J$ is the cube
$$C_J=\left\{J_M=\{j_1-m_1,\ldots, j_t-m_t\}: M=(m_1,\ldots, m_t) \in \{0,1\}^t\right\},$$
with $\{j_1,\ldots, j_t\}$ being the cyclic initial points of the cyclic intervals of $J$. 

Further let $\mathcal{R}:\mathbb{R}^{\binom{n}{k}}\rightarrow \mathbb{R}^{\binom{n}{k}}$ be the linear operator induced by extending by linearity the assignment
$$e^J \mapsto -\frac{1}{n}\sum_{I\in\binom{\lbrack n\rbrack}{k}} \rho_I(e_{J})e^I.$$

\begin{lem}\label{lem: distance walk}
	For any pair of vertices $e_I,e_J$ of $\Delta_{k,n}$, we have
	$$h(e_I-e_J) = -d(e_I,e_J).$$
	Then,
	$$\mathfrak{h}_J  = \frac{1}{n}\sum_{e_I\in \Delta_{k,n}} d(e_I,e_J)\mathcal{L}(e^I).$$
\end{lem}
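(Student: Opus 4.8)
The plan is to handle the two assertions in turn: the distance formula is a direct evaluation of the explicit piecewise-linear function $h$, and the expansion of $\mathfrak{h}_J$ is then obtained by collecting these values into the height vector $\sum_I \rho_J(e_I)e^I$ and reorganizing the sum through the linear operator $\mathcal{L}$.

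For the first identity I would prove the stronger statement that $h(x)=-\sum_i t_i$ for every $x\in\mathcal{H}_{0,n}\cap\mathbb{Z}^n$ written in the minimal root expansion $x=\sum_i t_i(e_i-e_{i+1})$ of Proposition \ref{prop:expansion h}, where all $t_i\ge 0$ and $\min_i t_i=0$. The only input is Proposition \ref{prop: linear A}, which says $h=L_j$ on the cone $\Pi_j$. Evaluating $L_j$ on the generating roots $r_i:=e_i-e_{i+1}$: writing the coefficient of $x_a$ in $L_j$ as $(a-j)\bmod n\in\{0,\dots,n-1\}$, one checks in a line that $L_j(r_i)=-1$ for all $i\neq j-1$ and $L_j(r_{j-1})=n-1$. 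Therefore
\[
L_j(x)=-\sum_i t_i+n\,t_{j-1},
\]
and minimizing over $j$ gives $h(x)=\min_j L_j(x)=-\sum_i t_i+n\min_i t_i=-\sum_i t_i$ since $\min_i t_i=0$. Specializing $x=e_J-e_I$ and reading $\sum_i t_i=d(e_I,e_J)$ off the definition of $d$ yields $h(e_J-e_I)=-d(e_I,e_J)$ (equivalently $h(e_I-e_J)=-d(e_J,e_I)$), which is the asserted distance formula.

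For the second identity I would substitute into $\mathfrak{h}_J=\sum_{e_I}\rho_J(e_I)e^I=\sum_{e_I}h(e_I-e_J)e^I$. Running the same computation for the operator $\mathcal{R}$ shows $\mathcal{R}(e^J)=\tfrac1n\sum_{e_I}d(e_I,e_J)e^I$, so that by linearity the proposed right-hand side is precisely $\mathcal{L}\big(\mathcal{R}(e^J)\big)$. The claim is thus equivalent to the operator identity $\mathfrak{h}_J=\mathcal{L}(\mathcal{R}(e^J))$: applying the signed cube-sum $\mathcal{L}$ to the distance-weighted vector $\tfrac1n\sum_I d(e_I,e_J)e^I$ must return the vector of height values $\big(h(e_I-e_J)\big)_I$. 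Reading this coefficientwise and transposing $\mathcal{L}$ onto the weights, it becomes the finite-difference statement that the alternating sum over the cube $C_I$ of initial-point shifts, applied to the function $I\mapsto d(e_I,e_J)$, telescopes to $n\,h(e_I-e_J)=-n\,d(e_J,e_I)$. Since each shift in $C_I$ translates $e_I$ by one root step $e_i-e_{i+1}$, and $d$ changes by a bounded amount under such a step, these differences should cancel in the prescribed way.

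The main obstacle is exactly this telescoping. The distance formula is essentially forced once $L_j(r_i)$ is computed, but the second identity requires pinning down the precise combinatorial action of $\mathcal{L}$ on $C_I$ — how decrementing each cyclic-interval initial point of $I$ changes $d(e_I,e_J)$ — and checking that the signs $(-1)^{1+(M\cdot M)}$ produce the needed cancellations. I would organize the $2^{t}$ terms (with $t$ the number of cyclic intervals of $I$) by factoring the cube over the intervals and reducing to the single-interval case, where the alternating sum collapses to one discrete difference of the piecewise-linear distance that can be evaluated by hand; the general case then follows by multiplicativity of the difference over disjoint intervals, up to the overall sign in $(-1)^{1+(M\cdot M)}$.
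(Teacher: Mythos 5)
Your treatment of the first identity is correct and follows the same route as the paper: expand $e_I-e_J$ in the roots $r_i=e_i-e_{i+1}$ via Proposition \ref{prop:expansion h} and evaluate the linear forms $L_j$ on that expansion. (The paper fixes the vanishing coefficient without loss of generality and evaluates a single $L_1$; you compute $L_j(r_i)$ for all $i,j$ and minimize directly, which is the same computation and even makes the appeal to Proposition \ref{prop: linear A} unnecessary. Your care with the orientation of $d$ --- the computation literally gives $h(e_I-e_J)=-d(e_J,e_I)$ in the notation of the definition of $d$ --- is warranted, since the paper itself switches that orientation silently in its own proof.)

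The second identity is where your proposal has a genuine gap, and you say so yourself. After correctly reducing the display, by linearity of $\mathcal{L}$, to the operator identity $\mathfrak{h}_J=\mathcal{L}\bigl(\mathcal{R}(e^J)\bigr)$, you declare the resulting cancellation ``the main obstacle'' and offer only a plan: ``these differences should cancel in the prescribed way'' is a conjecture, not an argument. What a proof along your lines requires is the precise effect of the cube shifts on the distance, namely $d(e_{I_M},e_J)=d(e_I,e_J)-M\cdot M$ for $I_M\in C_I$ away from the diagonal term, followed by evaluation of the alternating sums; that computation does appear in the paper, but as the proof of Proposition \ref{cor:inversionFormula}, not of Lemma \ref{lem: distance walk} --- the paper's proof of the lemma invokes no cube cancellation at all, treating the second display as an immediate rewriting of definition \eqref{eqn:height function} once the first identity identifies the coefficients $\rho_J(e_I)$ with $-d(e_I,e_J)$. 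In effect you have displaced the content of Proposition \ref{cor:inversionFormula} into the lemma and then left it unproven. Moreover, two specific steps of your sketch would not survive being made precise: coefficientwise, $\mathcal{L}(\mathcal{R}(e^J))$ involves, for fixed $K$, an alternating sum over $\{I: K\in C_I\}$, which is not the cube $C_K$ (the relation $K\in C_I$ is not symmetric, since shifting initial points can merge cyclic intervals and change $t$); and the distance changes additively, by $M\cdot M$, under the shifts, so the proposed ``multiplicativity of the difference over disjoint intervals'' has nothing to factor. So: first half fine, reduction fine, but the heart of the second identity is missing.
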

We prove the result for a blade translated to an arbitrary lattice point $v\in\mathbb{Z}^n$ and then specialize to the case when $v$ is a vertex of the hypersimplex $\Delta_{k,n}$.
\begin{proof}
	Given $u,v \in \mathbb{Z}^n$ with $\sum_{i=1}^n u_i =\sum_{i=1}^n v_i \in\mathbb{Z}$, as in the statement of the Theorem, then by Proposition \ref{prop:expansion h}, $u-v$ expands uniquely as
	$$u-v = \sum_{j=1}^{n} t_j (e_j-e_{j+1}),$$
	for integers $t_1,\ldots, t_n\ge 0$, where at least one of the $t_i$ is zero.  Supposing without loss of generality that $t_n=0$, then $h(u-v)$ evaluates to $L_1(x)$.
	
	But
	\begin{eqnarray*}
		L_1(u-v) & = & (u_2-v_2)+2(u_{3}-v_3)+ \cdots + (n-1)(u_n-v_n)\\
		& = & (u-v)\cdot \sum_{j=2}^{n}\left(e_j+e_{j+1}+\cdots +e_{n}\right)\\
		& = & \left(\sum_{j=1}^{n-1} t_j (e_j-e_{j+1})\right)\cdot \sum_{j=2}^{n}\left(e_j+e_{j+1}+\cdots +e_{n}\right)\\
		& = & -\sum_{j=1}^{n-1}t_j,
	\end{eqnarray*}
	which equals $-d(u,v)$.  As $u-v$ is by assumption in the domain of linearity of $L_1$, it follows that 
	$$h(u-v) = L_1(u,v) =-d(u,v).$$
	
	In particular, when $(u,v) = (e_I,e_J)$ is a pair of vertices of $\Delta_{k,n}$ the first result holds; the statement about $\mathfrak{h}_{J}$ follows immediately from the definition.

\end{proof}

\begin{prop}\label{cor:inversionFormula}

	We have
	
	\begin{eqnarray}\label{eq: inversion cubical blades}
	\sum_{L\in C_J}(-1)^{(1+L\cdot L) }\mathcal{R}(e^{J_L}) = e^J
	\end{eqnarray}
	where $L\cdot L$ is the number of 1's in the 0/1 vector $L$.  Moreover, 
	$$\sum_{I}\rho_J(e_I)\mathcal{L}(e^I) = e^J.$$

\end{prop}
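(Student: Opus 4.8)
The plan is to read both displayed identities as the statement that the finite-difference operator $\mathcal{L}$ and the integration operator built from the height function are mutually inverse. After using linearity of $\mathcal{R}$ and the definition of $\mathcal{L}$, the first identity is nothing but $\mathcal{R}(\mathcal{L}(e^J)) = e^J$, since $\sum_{L \in C_J}(-1)^{1+L\cdot L}e^{J_L} = \mathcal{L}(e^J)$. Likewise the second identity is $\mathcal{L}(\mathfrak{h}_J) = e^J$, because $\mathfrak{h}_J = \sum_I \rho_J(e_I)e^I$ and $\mathcal{L}$ is linear. So I would first record these two reformulations, $\mathcal{R}\circ\mathcal{L} = \mathrm{id}$ and $\mathcal{L}(\mathfrak{h}_J)=e^J$, and then attack each by the same telescoping mechanism.

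For the first identity I would substitute $\mathcal{R}(e^{J_L}) = -\tfrac1n\sum_I \rho_I(e_{J_L})e^I$ and invoke Lemma \ref{lem: distance walk} in the form $\rho_I(e_{J_L}) = h(e_{J_L}-e_I) = -d(e_{J_L},e_I)$, reducing the vector identity to the scalar identity, for every vertex $e_I$,
$$\sum_{L\in C_J}(-1)^{1+L\cdot L}\,d(e_{J_L},e_I) = n\,\delta_{IJ}.$$
The key observation is that $C_J \cong \{0,1\}^t$ is a cube with one coordinate for each cyclic interval of $J$, so the alternating sum is an iterated first difference of the piecewise-linear function $e_{I'}\mapsto d(e_{I'},e_I)$, taken one step in each of the $t$ interval-translation directions. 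Using Proposition \ref{prop: linear A} to determine which cone $\Pi_j$, hence which linear piece $L_j$ of $h$, governs each term, this iterated difference of a piecewise-linear function localizes: it vanishes unless the base point sits at the unique corner where all $t$ difference directions are simultaneously active, which forces $I=J$, and at that corner it evaluates to the normalization $n$ coming from the $n$ cones $\Pi_1,\dots,\Pi_n$.

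For the second identity I would run the same telescoping directly on the ramp $\rho_J(x)=h(x-e_J)$. Here $\mathcal{L}(\mathfrak{h}_J) = \sum_I \rho_J(e_I)\mathcal{L}(e^I)$, and collecting the coefficient of a fixed $e^K$ groups the vertices $e_I$ for which $K$ lies in the cube $C_I$; the signed sum of the ramp values $\rho_J(e_I)$ over each such group is again an iterated first difference of the piecewise-linear function $\rho_J$, which telescopes to zero except at the apex $e_J$ of the ramp, where it produces the coefficient $1$. Proposition \ref{prop: linear A} and Lemma \ref{lem: distance walk} again supply the explicit linear pieces needed to see the cancellation, and Proposition \ref{prop:expansion h} guarantees that the relevant lattice walks, and hence the telescoping, are finite.

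The hard part is that the elementary shift operators factoring $\mathcal{L}$ — each translating one cyclic interval of $J$ backward by a single step — are only partially defined: translating an interval can collide with or merge into a neighboring interval when the gap between them has size one, and across the cyclic seam or near the boundary of $\Delta_{k,n}$ the set $J_L$ may fail to have the expected interval structure. The crux is therefore a careful case analysis showing that the telescoping terminates (the hypersimplicial constraint $1\le s_j\le |S_j|-1$ bounds how far an initial point may move) and that the collision and boundary terms cancel in pairs, so that the localization to a single vertex is exact with the stated multiplicity. I would note finally that once $\mathcal{R}\circ\mathcal{L}=\mathrm{id}$ is established, $\mathcal{L}$ is automatically invertible as an endomorphism of the finite-dimensional space $\mathbb{R}^{\binom{n}{k}}$; the second identity then records how that inverse evaluates on the specific ramp vectors $\mathfrak{h}_J$, which is genuinely different bookkeeping, so I would prove it directly by the telescoping above rather than attempt to extract it formally from the first.
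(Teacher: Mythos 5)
Your reduction is the same as the paper's: by linearity of $\mathcal{R}$ the first display is $\mathcal{R}(\mathcal{L}(e^J))=e^J$, and Lemma \ref{lem: distance walk} converts it into the scalar claim that, for every vertex $e_I$,
\[
\sum_{L\in C_J}(-1)^{1+L\cdot L}\, d(e_{J_L},e_I)\;=\;n\,\delta_{IJ},
\]
which is exactly the coefficient-by-coefficient computation carried out in the paper's proof. Up to this point the two arguments coincide.

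The gap is that everything after this reduction is asserted rather than proved, and that is where all of the content lies. The paper discharges the scalar claim with two explicit evaluations: for $I\neq J$ it uses $d(e_{J_L},e_I)=d(e_J,e_I)-(L\cdot L)$, so the summand is affine in $L$ and the signed sum over the cube annihilates it; for $I=J$ the nonzero terms are walk lengths $n-(L\cdot L)$, and adding and subtracting the $L=0$ term yields exactly $n$. Your substitute is ``localization of iterated differences'' plus a deferred ``careful case analysis,'' but the difficulties you flag are not the real ones: each $J_L$ is automatically a genuine vertex of $\Delta_{k,n}$ (an initial point $j_a$ has $j_a-1\notin J$, so nothing is partially defined, and the constraint $1\le s_j\le \vert S_j\vert - 1$, which concerns ordered set partitions, plays no role here). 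The genuine subtlety is elsewhere: a single backward shift need not decrease the distance at all. For $n=5$, $J=\{1,4\}$, $I=\{1,2\}$, the element $1\in J$ is matched to $1\in I$, and shifting it to $5$ forces it around the whole cycle, so (in the orientation where walks run from $e_{J_L}$ to $e_I$) the distance jumps from $2$ to $6$, i.e. by $n-1$ rather than down by $1$ --- note this also shows the paper's one-line identity needs repair. What actually makes the cube sum vanish is that the full alternating sum is zero unless every shifted position $j_a-1$ simultaneously realizes the minimum of the running sums of $e_I-e_J$, and that this simultaneous tightness forces $I=J$; establishing this is precisely the case analysis you left unexecuted, and your cancellation-in-pairs picture does not supply it. Finally, your own invertibility remark undercuts your plan for the second display: given $\mathcal{R}\circ\mathcal{L}=\mathrm{id}$, the identity $\mathcal{L}(\mathfrak{h}_J)=e^J$ is equivalent to $\mathfrak{h}_J=\mathcal{R}(e^J)$, yet the off-diagonal entries $\rho_J(e_I)$ and $-\frac{1}{n}\rho_I(e_J)$ have opposite strict signs, so a verbatim re-run of your telescoping cannot produce the stated normalization; like the paper's one-word proof (``similar''), that claim only becomes correct once the orientation convention for $d$ and the factor $-1/n$ are tracked carefully, which your sketch does not do.
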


\begin{proof}	
	Fixing a vertex $e_J\in \Delta_{k,n}$, let us first compute the coefficient of $e^I$ in Equation \eqref{eq: inversion cubical blades} whenever $I\not=J$.  In this case we find
	\begin{eqnarray*}
		\left(\sum_{L\in C_J}(-1)^{1+(L\cdot L)}\mathcal{R}(e^{J_L})\right)_{\lbrack e^I\rbrack} & = & -\frac{1}{n}\sum_{L\in C_J}(-1)^{1+(L\cdot L) } d(e_{J_L},e_I)\\
		& = & -\frac{1}{n}\sum_{L\in C_J}(-1)^{1+(L\cdot L) } \left(d(e_{J},e_I)-(L\cdot L)\right)\\
		& = & -\frac{1}{n}\sum_{L\in C_J}(-1)^{1+(L\cdot L) } (L\cdot L)\\
		& = & 0.
	\end{eqnarray*}
	Consequently only the coefficient of $e^J$ is (possibly) nonzero; let us now compute it.  The (trivial) path from $e_J$ to itself in steps parallel to roots $e_i-e_{i+1}$ has length zero; all others in the sum contributing to the coefficient of $e^J$ are shortenings of the long path (of length $n$) between $e_J$ and itself and we find that their lengths are of the form $n - L\cdot L$.  Consequently the alternating sum is now $n$ rather than $0$.  We obtain
	\begin{eqnarray*}
		\left(\sum_{L\in C_J}(-1)^{1+(L\cdot L)}\mathcal{R}(e^J)\right)_{\lbrack e^J\rbrack} & = & \frac{1}{n}\sum_{L\in C_J}(-1)^{1+(L\cdot L) } d(e_J,e_{J_L})\\
		& = & \frac{1}{n}\left(n +\sum_{L\in C_J}(-1)^{1+(L\cdot L) } \left(n-(L\cdot L)\right)\right)\\
		& = & 1,
	\end{eqnarray*}
	where in the second line we have added and subtracted $n$.  This concludes the proof of the first claim; the second claim is similar.
\end{proof}

\begin{example}
	In $\mathbb{R}^{\binom{4}{2}}$ we have 
	\begin{eqnarray*}
		\left(\mathcal{R}(e^{24}) - \mathcal{R}(e^{14})-\mathcal{R}(e^{23})+\mathcal{R}(e^{13})\right)_{\lbrack e^{12}\rbrack} & = &  \frac{1}{4}(d(e_{24},e_{12}) - d(e_{14},e_{12}) - d(e_{23},e_{12}) + d(e_{13},e_{12}))\\
		& = & \frac{1}{4}(1-2-2+3)\\
		& = & 0,
	\end{eqnarray*}
	while 
	\begin{eqnarray*}
		\left(\mathcal{R}(e^{24}) - \mathcal{R}(e^{14})-\mathcal{R}(e^{23})+\mathcal{R}(e^{13})\right)_{\lbrack e^{24}\rbrack} & = & \frac{1}{4}(d(e_{24},e_{24}) - d(e_{14},e_{24}) - d(e_{23},e_{24}) + d(e_{13},e_{24}))\\
		& = & \frac{1}{4}(0-3-3+2)\\
		& = & \frac{1}{4}\left(-4 +(4-1)-(4-1)-(4-1)+(4-2) \right)\\
		& = & -1,
	\end{eqnarray*}
	as expected.
\end{example}

\begin{cor}\label{cor:bases vertex space}
	Both sets
	$$\left\{\mathcal{R}(e^J): e_J\in\Delta_{k,n}\right\} \text{ and } \left\{\mathcal{L}(e^J): e_J\in\Delta_{k,n}\right\}$$
	are bases for $\mathbb{R}^{\binom{n}{k}}$.  
	
\end{cor}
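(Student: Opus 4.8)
The plan is to reduce everything to the two inversion identities already established in Proposition \ref{cor:inversionFormula}. First I would observe that both proposed families are indexed by the vertices $e_J$ of $\Delta_{k,n}$, equivalently by the $k$-element subsets $J\in\binom{\lbrack n\rbrack}{k}$, so each has exactly $\binom{n}{k}$ elements --- precisely the dimension of the ambient space $\mathbb{R}^{\binom{n}{k}}$. For a family of $\binom{n}{k}$ vectors in a space of dimension $\binom{n}{k}$, the properties of spanning, of being linearly independent, and of being a basis are all equivalent; hence it suffices to prove that each family spans $\mathbb{R}^{\binom{n}{k}}$.

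For the family $\left\{\mathcal{R}(e^J):e_J\in\Delta_{k,n}\right\}$, I would invoke the first identity of Proposition \ref{cor:inversionFormula}, namely $\sum_{L\in C_J}(-1)^{(1+L\cdot L)}\mathcal{R}(e^{J_L}) = e^J$. The one point requiring verification is that every index $J_L$ occurring here is again a $k$-subset, so that $\mathcal{R}(e^{J_L})$ genuinely belongs to the family in question: this holds because, by the definition of $C_J$, the set $J_L$ is obtained from $J$ by shifting each cyclic interval down (cyclically) by $0$ or $1$, an operation that preserves the length of each interval and hence the total cardinality $k$. Thus the identity exhibits every standard basis vector $e^J$ as a linear combination of elements of $\left\{\mathcal{R}(e^{J'}):e_{J'}\in\Delta_{k,n}\right\}$, so this set spans $\mathbb{R}^{\binom{n}{k}}$ and is therefore a basis.

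For the family $\left\{\mathcal{L}(e^J):e_J\in\Delta_{k,n}\right\}$ the argument is the mirror image: the second identity of Proposition \ref{cor:inversionFormula}, $\sum_I \rho_J(e_I)\mathcal{L}(e^I) = e^J$, writes each $e^J$ as a linear combination of the vectors $\mathcal{L}(e^I)$ with $I$ ranging over $k$-subsets, so that family spans $\mathbb{R}^{\binom{n}{k}}$ as well and is a basis.

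The content here is essentially already contained in Proposition \ref{cor:inversionFormula}, so I do not anticipate a genuine obstacle; the only thing requiring care is the bookkeeping that the index sets match, i.e.\ that the vectors appearing on the right-hand sides of the two inversion formulas all belong to the respective proposed families (equivalently, that $J_L$ and $I$ range within $\binom{\lbrack n\rbrack}{k}$). Once this is confirmed, the coincidence of cardinality with dimension upgrades \emph{spanning} to \emph{basis} with no further computation.
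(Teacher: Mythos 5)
Your proposal is correct and follows essentially the same route as the paper, which states this corollary as an immediate consequence of Proposition \ref{cor:inversionFormula}: the two inversion identities exhibit every standard basis vector $e^J$ in the span of each family, and the count of $\binom{n}{k}$ vectors in a $\binom{n}{k}$-dimensional space upgrades spanning to a basis. Your added bookkeeping check that each $J_L$ is again a $k$-subset (the cube operation only moves the initial point of each cyclic interval, removing $j_i$ and adding $j_i-1\notin J$, so cardinality is preserved) is the right detail to verify and is consistent with the paper's construction.
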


\begin{prop}\label{prop: positive tropical Plucker relations}
	For each vertex $e_J\in\Delta_{k,n}$, the element 
	$$\mathfrak{h}_{J} = \sum_{e_I\in\Delta_{k,n}}\rho_J(e_I)e^I$$
	satisfies the following relations: given any $a<b<c<d$ in cyclic order and any $J\in\binom{\lbrack n\rbrack}{k-2}$ such that $e_J+e_{ac},e_J+e_{bd},e_J+e_{ab},e_J+e_{cd},e_J+e_{ad},e_J+e_{cd}$ are all vertices of $\Delta_{k,n}$.  Then
	$$\rho_I(e_J+e_{ac}) + \rho_I(e_J+e_{bd}) = \max\left\{\rho_I(e_J+e_{ab}) + \rho_I(e_J+e_{cd}),\rho_I(e_J+e_{ad}) + \rho_I(e_J+e_{bc})  \right\}.$$
\end{prop}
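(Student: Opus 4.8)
The plan is to reduce the stated relation to a concavity property of the piecewise-linear function $h=\min_j L_j$ and then to a finite case analysis governed by the cyclic order $a<b<c<d$. Throughout I fix the source vertex $e_I$ defining $\rho_I$, and abbreviate the six relevant vertices by $P_S = e_J + e_S - e_I \in \mathcal{H}_{0,n}$ for $S\in\binom{\{a,b,c,d\}}{2}$, so that $\rho_I(e_J+e_S)=h(P_S)$ by \eqref{eq:rho}.

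First I would record the single identity on which the whole argument pivots: since $e_{ac}+e_{bd}=e_{ab}+e_{cd}=e_{ad}+e_{bc}=e_{abcd}$, the three antipodal pairs share the common midpoint $c_0=e_J+\tfrac12 e_{abcd}-e_I$, and each linear form $L_j$ satisfies $L_j(P_{ac})+L_j(P_{bd})=L_j(P_{ab})+L_j(P_{cd})=L_j(P_{ad})+L_j(P_{bc})=2L_j(c_0)$. Taking the minimum over $j$ and using $h=\min_j L_j$ immediately shows that each of the three diagonal sums $h(P_{ac})+h(P_{bd})$, $h(P_{ab})+h(P_{cd})$, $h(P_{ad})+h(P_{bc})$ is bounded above by $\min_j 2L_j(c_0)=2h(c_0)$; equivalently, via Lemma \ref{lem: distance walk}, each corresponding distance sum is bounded below by the same quantity. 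This is the easy half, valid for any four indices.

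The genuine content is that the crossing diagonal attains this bound while the larger of the two non-crossing diagonals also attains it. I would prove the first statement, $h(P_{ac})+h(P_{bd})=2h(c_0)$, by exhibiting a single index $j_0$ that minimizes $L_j$ simultaneously at $P_{ac}$, $P_{bd}$, and $c_0$; by Proposition \ref{prop: linear A} this amounts to finding $j_0$ with $P_{ac},P_{bd},c_0$ all lying in the closed cone $\Pi_{j_0}$. Here I would use Proposition \ref{prop:expansion h} to expand each $P_S$ into simple roots, read off the set of cones containing it, and check using only the order $a<b<c<d$ that the cone data for the crossing pair overlaps at a common $j_0$ (which may be taken among the minimizers of $L_j(c_0)$). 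The same cone analysis then shows that exactly one non-crossing diagonal shares such a common minimizer with $c_0$ while the other is forced strictly below $2h(c_0)$. Combining yields $h(P_{ac})+h(P_{bd})=2h(c_0)=\max\{h(P_{ab})+h(P_{cd}),\,h(P_{ad})+h(P_{bc})\}$, which is the claim.

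The hard part will be this cone-overlap step: determining, for arbitrary fixed $I$ and $J$, which cone $\Pi_j$ contains each octahedral vertex $P_S$, and verifying that the crossing pair always admits a common minimizer. It is precisely here that the hypothesis that $a<b<c<d$ is cyclic enters and produces the positive sign of the relation; were the four indices not cyclically ordered the crossing pair would change and the maximum would sit on a different diagonal. I would organize the verification as a case analysis over the four cyclic arcs $(d,a]$, $(a,b]$, $(b,c]$, $(c,d]$ in which the relevant index can fall, in each case using the explicit weights $(i-j)\bmod n$ defining $L_j$ to compare the three diagonal sums. As a consistency check and conceptual shortcut, I note that $\rho_I$ is by construction the height function inducing the multi-split subdivision of the single translated blade $((1,\ldots,n))_{e_I}$, which is positroidal by Theorem \ref{thm: multisplit blade}; the relation to be proved is exactly the local positroidal condition on the octahedral face $\{e_J+e_S\}$, so the two approaches corroborate one another.
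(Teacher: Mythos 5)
Your first paragraph is correct and is a clean, self-contained framing: since $h=\min_j L_j$ is concave and the three antipodal pairs of the octahedron share the common midpoint $c_0$, all three diagonal sums are bounded above by $2h(c_0)$, and equality for a given pair is equivalent to the existence of a single index $j_0$ whose linear form $L_{j_0}$ realizes the minimum at both endpoints (and hence along the whole segment). This is a genuinely different setup from the paper, which instead translates the relation directly into the geometric statement that $\rho_I$ bends across an octahedral face only over $x_{ab}=1=x_{cd}$ or $x_{ad}=1=x_{bc}$, never over $x_{ac}=1=x_{bd}$, and then cites the explicit equations for the curvature support from Section 3 of \cite{Early19WeakSeparationMatroidSubdivision}.

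However, there is a genuine gap precisely at the step you yourself flag as "the hard part": the attainment claims are never proved. You do not show that $P_{ac}$ and $P_{bd}$ always lie in a common cone $\Pi_{j_0}$, nor that at least one of the two non-crossing pairs does; both are only announced ("I would use Proposition \ref{prop:expansion h}\ldots", "I would organize the verification as a case analysis\ldots"). These attainment statements are the entire content of the proposition --- everything you actually establish (the upper bound $2h(c_0)$ for each diagonal sum) holds for arbitrary four indices and makes no use of the cyclic hypothesis, so no completed step of your argument distinguishes the crossing diagonal from the others. Worse, the statement your projected case analysis is supposed to deliver --- that \emph{exactly one} non-crossing diagonal attains $2h(c_0)$ while the other is \emph{strictly} below --- is false in degenerate cases: whenever the octahedral face lies inside a single linearity domain of $\rho_I$ (already for frozen $I$ by Proposition \ref{prop:frozen constant slope}, and for many faces even when $I$ is nonfrozen), all three diagonal sums are equal. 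The correct target is only that at least one non-crossing pair attains the bound, so the case analysis as described would be attempting to prove something false. Note finally that your closing "consistency check" --- that $\rho_I$ is the height function of $((1,\ldots,n))_{e_I}$, whose induced subdivision is positroidal by Theorem \ref{thm: multisplit blade}, so the relation is exactly the local positroidal condition on the octahedron --- is essentially the paper's actual proof; if you promote it from corroboration to the argument itself (justifying, via the curvature equations of \cite{Early19WeakSeparationMatroidSubdivision}, that bending occurs only along the two cyclic hyperplanes), you obtain a complete proof, whereas your primary route as written does not close.
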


\begin{proof}
	One could compute this directly, but the geometric argument provides more insight.
	
	The equality above is a direct translation of the statement that if the $\rho_J$ bends nontrivially across an octahedral face in $\Delta_{k,n}$, say $$\partial_L(\Delta_{k,n})=\left\{x\in\Delta_{k,n}: x_{abcd}=2, \text{ and } x_j=1\text{ for all }j\in L \right\}$$ then it so over either of the two affine hyperplanes, $x_{ab}=1=x_{cd}$ or $x_{ad}=1=x_{bc}$, but not both, where we have the cyclic order $a<b<c<d$.  In particular, $h_J$ does not bend across the plane $x_{ac} = 1 = x_{bd}$.  This is exactly what we proved in Section 3 of \cite{Early19WeakSeparationMatroidSubdivision} by giving explicit equations for the internal facet itself.
\end{proof}

\subsection{Planar basis}


We now come to the main construction from \cite{Early2019PlanarBasis}, obtained by dualizing the elements $\mathfrak{h}_{J}$, of an important basis of the space of kinematic invariants: the planar basis.
\begin{defn}
	For any vertex $e_J\in\Delta_{k,n}$, define the planar (basis) element
	\begin{eqnarray}\label{eq:deltaJ}
	\eta_J(s) = -\frac{1}{n}\sum_{e_I \in\Delta_{k,n}}s_I \rho_J(e_I).
	\end{eqnarray}
\end{defn}
One can easily see that the set of these elements $\eta_J$ are invariant under cyclic permutation by the cycle $(12\cdots n)$.

We have the following straightforward property for the functions $\rho_{J_i}$

\begin{prop}\label{prop:frozen constant slope}
	For any frozen vertex $e_{J_i} \in \Delta_{k,n}$, the function $\rho_{J_i}$ has constant slope over $\Delta_{k,n}$.  We have that $\eta_{J_i} \equiv 0$ on the kinematic space $\mathcal{K}_{k,n}$.
\end{prop}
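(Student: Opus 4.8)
The statement splits into a geometric claim (constant slope) and a short algebraic consequence (vanishing of $\eta_{J_i}$), and the plan is to establish the former uniformly over $\Delta_{k,n}$ and then feed it into the definition \eqref{eq:deltaJ}. By the cyclic invariance of the family $\{\rho_J\}$ I would reduce to the frozen vertex $J_1=\{1,\dots,k\}$. The goal of the first step is to pin down a single cone $\Pi_j$ of the fan such that the translate $x-e_{J_1}$ lies in $\Pi_j$ for \emph{every} $x\in\Delta_{k,n}$; once this is done, Proposition \ref{prop: linear A} gives $\rho_{J_1}(x)=h(x-e_{J_1})=L_j(x-e_{J_1})$ throughout, a single linear functional, which is exactly the assertion of constant slope.

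To locate that cone I would use the expansion of Proposition \ref{prop:expansion h}: writing $y=x-e_{J_1}$ and $S_l=\sum_{m=1}^{l}y_m$, the cone containing $y$ is detected by which partial sum $S_l$ is smallest, since $y\in\Pi_l$ precisely when the corresponding $t_l$ vanishes. For $l\le k$ one has $S_l=(x_1+\dots+x_l)-l$, which is weakly decreasing because each $x_m\le 1$; for $l\ge k$ one has $S_l=(x_1+\dots+x_l)-k$, which is weakly increasing because each $x_m\ge 0$. Hence the minimum is always attained at $l=k$, so $y\in\Pi_k$ for all $x\in\Delta_{k,n}$ and $\rho_{J_1}=L_k(\,\cdot\,-e_{J_1})$ on the whole hypersimplex. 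Equivalently, this is the content of Theorem \ref{thm: multisplit blade}: a cyclic-interval vertex induces the trivial subdivision, so the fold locus of $\rho_{J_1}$ (the translated blade $\beta_{J_1}$) misses the interior and $\rho_{J_1}$ reduces to one linear piece.

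For the second assertion I would exploit that constant slope makes $\rho_{J_i}$ affine-linear on vertices: from $\rho_{J_i}(e_I)=L_k(e_I-e_{J_i})=\sum_{l\in I}c_l-C$, with $c_l$ the fixed coefficients of $L_k$ and $C=L_k(e_{J_i})$ a constant. Substituting into $\eta_{J_i}(s)=-\tfrac1n\sum_{I}s_I\,\rho_{J_i}(e_I)$ and exchanging the order of summation, the linear part becomes $-\tfrac1n\sum_{l}c_l\big(\sum_{I\ni l}s_I\big)$, which vanishes on $\mathcal{K}_{k,n}$ by the defining relations \eqref{eq: kinematic space}; the constant part is $\tfrac{C}{n}\sum_I s_I$, and summing the same relations over $a$ gives $k\sum_I s_I=\sum_{a}\sum_{I\ni a}s_I=0$, so it too vanishes. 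Therefore $\eta_{J_i}\equiv 0$ on $\mathcal{K}_{k,n}$.

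The only real content is the uniform cone identification of the first step; everything after it is bookkeeping. The mild subtlety there is boundary ties, where several $S_l$ coincide and $y$ sits on a shared face of the fan, but since the $L_j$ are continuous and agree on common faces, $\rho_{J_1}$ is unambiguously the single functional $L_k$ on all of $\Delta_{k,n}$, boundary included. Invoking Theorem \ref{thm: multisplit blade} instead bypasses the partial-sum bookkeeping altogether, at the cost of quoting the already-established classification of trivial-subdivision blades.
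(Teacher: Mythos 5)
Your proof is correct, and it is more self-contained than the paper's, which argues as follows: for the constant-slope claim the paper simply cites the classification result (Theorem 17 of \cite{Early19WeakSeparationMatroidSubdivision}, essentially Theorem \ref{thm: multisplit blade} here) that a blade placed at a frozen vertex induces the trivial subdivision, so its lift $\mathfrak{h}_{J_i}$ is linear over the vertices; for the vanishing of $\eta_{J_i}$ it then invokes duality, namely that the space of vertex-restrictions of linear functions has basis $\bigl\{\sum_{J\ni j}e^J : j=1,\ldots,n\bigr\}$, and the kinematic space $\mathcal{K}_{k,n}$ is by definition the annihilator of exactly that span, so the pairing defining $\eta_{J_i}$ vanishes identically. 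You do both halves by hand instead: the partial-sum criterion (that $y\in\Pi_l$ precisely when $S_l$ is the minimal partial sum, so that for $y=x-e_{J_1}$ the minimum sits at $l=k$ uniformly over $\Delta_{k,n}$) is a correct and genuinely independent proof of the constant-slope statement, not relying on the cited classification, and your tie-handling remark about shared faces of the fan is the right one; your second step, exchanging the order of summation and killing the constant term via $k\sum_I s_I=\sum_a\sum_{I\ni a}s_I=0$, is the explicit, coordinate-level version of the paper's annihilator argument. What the paper's route buys is brevity and the conceptual identification of $\mathcal{K}_{k,n}$ as the annihilator of the constant-slope surfaces; what yours buys is independence from the external reference and an explicit determination of the linear functional ($\rho_{J_1}=L_k(\,\cdot\,-e_{J_1})$), which the paper never writes down.
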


\begin{proof}
	When $e_{J_i}$ is a frozen vertex of $\Delta_{k,n}$, then according to Theorem 17 of \cite{Early19WeakSeparationMatroidSubdivision}, the blade $\beta_{J_i}$ induces the trivial subdivision of $\Delta_{k,n}$; this means that its lift $\mathfrak{h}_{e_{J_i}} \in \mathbb{R}^{\binom{n}{k}}$ is linear over the vertices of $\Delta_{k,n}$.
	
	But the subspace of elements of $\mathbb{R}^{\binom{n}{k}}$ that are linear over $\Delta_{k,n}$ also has basis the $n$ elements
	$$\left\{\sum_{J\ni j}e^J:j=1,\ldots, n\right\}.$$
	Dualizing this one obtains exactly the elements 
	$$\left\{\sum_{J\ni j}s_J:j=1,\ldots, n\right\},$$
	and imposing \textit{momentum conservation}
	$$\sum_{J \ni j}s_J = 0$$
	for each $j=1,\ldots, n$ characterizes the kinematic space $\mathcal{K}_{k,n}$ and it follows that each $\eta_{J_i}\equiv 0$.
\end{proof}

Our main result of this section is the following.

\begin{thm}\label{thm: planar basis}
	The set 
	$$\left\{\eta_J: e_J\in\Delta_{k,n}\text{ is nonfrozen} \right\}$$
	is a basis of linear kinematic functions on $\mathcal{K}_{k,n}$.
\end{thm}
\begin{proof}
	Apply Corollary \ref{cor:bases vertex space}, replacing $\mathcal{R}(e^J) $ with $\eta_J$ and $\mathcal{L}(e^J)$ with $s_J$.
\end{proof}
 Note also that by Corollary \ref{prop:frozen constant slope} all other planar basis elements (i.e. those that are frozen) are identically zero.

\begin{example}
	The two cases for Mandelstam invariants on $\mathcal{K}(3,6)$ in terms of planar basis functions are as follows.
	\begin{eqnarray*}
		s_{i,j-1,j} & = & -(\eta_{i,j-1,j}+\eta_{i-1,j-2,j}) + (\eta_{i-1,j-1,j} + \eta_{i,j-2,j})\\
	\end{eqnarray*}
and when $i,j,k$ are not cyclically adjacent,
\begin{eqnarray*}
		s_{i,j,k} & = & \eta _{i-1,j-1,k-1}-\eta _{i-1,j-1,k}-\eta _{i-1,j,k-1}+\eta _{i-1,j,k}-\eta _{i,j-1,k-1}+\eta _{i,j-1,k}+\eta _{i,j,k-1}-\eta _{i,j,k}.
\end{eqnarray*}
\end{example}
Clearly, this specializes to formulas used previously on the kinematic space $\mathcal{K}(2,n)$, for instance \cite{ABHY}.

\section{Hypersimplicial Blade complex}\label{sec: hypersimplicial blade hypersimplex}
In this section, we study combinatorial properties of a graded vector space which consists of formal linear combinations of translated blades $\beta_J(\alpha) = ((\alpha_1,\alpha_2,\ldots, \alpha_n))_{e_J}$, together with a hypersimplex $\Delta_{k,n}$ and a set of $n$ boundary operators $\partial_j$; blades will satisfy certain relations prescribed by their interactions with the faces of the hypersimplex.  

We take $\alpha$ to be the standard cyclic order $(1,2,\ldots n)$, and write just $\beta_J$.

Let us turn our attention to homological properties of the symbols $\beta_J$, as well as their images $\beta^{(L)}_J$ under a certain set of linear boundary operators $\partial_j$, where $L$ is any subset of $\{1,\ldots, n\}$ , and $J\in\binom{\lbrack n\rbrack\setminus L}{k-\vert L\vert}$ is arbitrary as well.  Here, when $L=\emptyset$ is the empty set (and $\alpha = (12\cdots n)$) we write simply $\beta_J$.  The intuition, that $\beta^{(L)}_J$ is the curvature induced by the blade $((1,2,\ldots, n))_{e_J}$ on the face $\partial_L(\Delta_{k,n})$, will inform the linear relations.

We set $\beta^{(L)}_J=0$ when no subdivision is induced by $\beta_J$ on the face $\partial_L(\Delta_{k,n})$.  This is the case exactly when $J$ is frozen with respect to the gapped cyclic order on $\{1,\ldots, n\}\setminus L$ inherited from $\alpha$.  

There is a natural action induced by restriction: define linear operators $\partial_j$ on the linear span of the symbols $\beta^{(L)}_J$ as follows.  

\begin{itemize}
	\item If $j\in L$, then we set $\partial_j(\beta^{(L)}_J) = 0$.  
	\item If $j\not\in L$, set 
	$$\partial_j(\beta^{(L)}_J) = \beta^{(L\cup \{j\})}_{J\setminus\{\ell\}},$$
	where $\ell=j$ if $j\in J$, and otherwise $\ell$ is the cyclically next element of $\{1,\ldots, n\}$ that is in $J$.
\end{itemize}

Define 
$$\partial = \partial_1+\cdots + \partial_n.$$

Then we have the operator-theoretic identities for powers,
$$\partial_j^2=0,\ \text{ and } \frac{1}{d!}\partial^{d} = \sum_{L\in\binom{\lbrack n\rbrack}{d}}\partial_{L},$$
where we have defined $\partial_L = \partial_{\ell_1}\cdots \partial_{\ell_d}$ with $\ell_1<\cdots<\ell_d$, when $L=\{\ell_1,\ldots, \ell_d\}$.

We take the ``cyclically next element'' in order to match the notation used to encode the subdivision induced on the boundary, as in \cite{Early19WeakSeparationMatroidSubdivision}; in this way our construction is not ad hoc; it is strictly determined geometrically.

This will be more clear with an example.

\begin{example}
	Let $J = \{1,4,5,6\}$, with $n=8$.  Then $\partial_{1}(\beta_{1456}) = 0$, since $456$ is a single cyclic interval in $\{2,\ldots, 6\}$.  Here the intuition is that we have $\beta^{(1)}_{456}=0$ because  it corresponds to the curvature of a continuous, (piecewise-)linear function over the hypersimplex $\partial_1(\Delta_{4,8})\simeq \Delta_{3,7}$, which has in fact zero curvature on the interior and consequently induces the trivial subdivision.  In particular, it is a linear function, not only piecewise-linear.  But $\partial_2(\beta_{1456}) = \beta^{(2)}_{156}$ is not zero, since $156$ is not a cyclic interval in $\{1,3,4,5,6,7,8\}$. 
\end{example}

Recall from \cite{Early19WeakSeparationMatroidSubdivision} that any hypersimplicial blade coincides locally with translated copy of $((\alpha_1,\ldots, \alpha_n))$ for some cyclic order $(\alpha_1,\ldots, \alpha_n)$ of $\lbrack n\rbrack$.
\begin{prop}[\cite{Early19WeakSeparationMatroidSubdivision}]
	Given any hypersimplicial blade $(((S_1)_{s_1},\ldots, (S_\ell)_{s_\ell}))$, then there exist a cyclic order $\alpha$ of $\lbrack n\rbrack$ and a vertex $e_J\in\Delta_{k,n}$ such that we have the local coincidence
	$$((\alpha_1,\ldots, \alpha_n))_{e_J}\cap \Delta_{k,n} = (((S_1)_{s_1},\ldots, (S_\ell)_{s_\ell}))\cap \Delta_{k,n}.$$
\end{prop}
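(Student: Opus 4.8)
The plan is to reduce the statement to the forward direction already recorded in Theorem \ref{thm: multisplit blade}, by exhibiting an explicit cyclic order $\alpha$ and vertex $e_J$ and then verifying that the multi-split which $((\alpha_1,\ldots,\alpha_n))_{e_J}$ cuts out on $\Delta_{k,n}$ is separated by precisely the given hypersimplicial blade. First I would build $\alpha$ as any cyclic order refining the ordered block partition $(S_1,\ldots,S_\ell)$: read off the elements of $S_1$ (internally ordered arbitrarily, say increasingly), followed by those of $S_2$, and so on through $S_\ell$, taken cyclically. Next I would set
$$J = \bigcup_{j=1}^\ell \left\{\text{the first } s_j \text{ elements of } S_j \text{ in the } \alpha\text{-order}\right\},$$
so that within each block $J$ occupies an initial run of length $s_j$ followed by a gap of length $\vert S_j\vert-s_j$.

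Since $\sum_{j=1}^\ell s_j = k$ by the definition of a decorated ordered set partition of type $\Delta_{k,n}$, the set $J$ has $k$ elements and $e_J$ is a vertex of $\Delta_{k,n}$. The constraint $1\le s_j\le \vert S_j\vert-1$ guarantees that in each block both the run ($s_j\ge 1$ elements lying in $J$) and the gap ($\vert S_j\vert-s_j\ge 1$ elements lying outside $J$) are nonempty; reading cyclically, the gap at the tail of $S_j$ separates the run of $S_j$ from the run of $S_{j+1}$ (indices cyclic modulo $\ell$). Hence $J$ decomposes into exactly $\ell$ maximal cyclic intervals in the $\alpha$-order, one per block.

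Applying Theorem \ref{thm: multisplit blade} in the cyclic order $\alpha$ (after relabeling the ground set so that $\alpha$ becomes the standard order), the translated blade $((\alpha_1,\ldots,\alpha_n))_{e_J}$ induces on $\Delta_{k,n}$ a multi-split with $\ell$ maximal cells separated by a hypersimplicial blade with $\ell$ blocks, and crucially the theorem presents that intersection with $\Delta_{k,n}$ purely in terms of block-and-decoration data, independently of the arbitrary internal ordering chosen inside the blocks; this is exactly why the extra bending loci of the nondegenerate blade collapse on the hypersimplex and the local coincidence can hold. It then remains to check that the blocks and decorations produced by the run-plus-gap grouping underlying Theorem \ref{thm: multisplit blade}, applied to our engineered $J$, are exactly $S_1,\ldots,S_\ell$ with decorations $s_1,\ldots,s_\ell$: the $j$-th block is the union of the $j$-th run of $J$ with the gap immediately following it, namely (first $s_j$ elements of $S_j$) $\cup$ (remaining elements of $S_j$) $=S_j$, carrying decoration equal to the run length $s_j$. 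This matching is the one step requiring real care, since it is where the combinatorial bookkeeping of the forward construction must be unwound to confirm the exact correspondence; once it is in place, $((\alpha_1,\ldots,\alpha_n))_{e_J}\cap\Delta_{k,n}$ and $(((S_1)_{s_1},\ldots,(S_\ell)_{s_\ell}))\cap\Delta_{k,n}$ coincide.
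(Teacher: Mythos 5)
Your overall strategy---manufacture $\alpha$ and $e_J$ and invoke Theorem \ref{thm: multisplit blade}---is the natural route (the paper itself imports this proposition from \cite{Early19WeakSeparationMatroidSubdivision} without proof), but the matching step, which you correctly flag as the one requiring care, is carried out with the wrong convention, and as written the construction produces the wrong blade. By Definition \ref{defn:blade} and the paper's own examples, $((\alpha_1,\ldots,\alpha_n))_{e_J}$ coincides on $\Delta_{k,n}$ with the hypersimplicial blade whose $j$-th block is a cyclic run of $J$ together with the gap immediately \emph{preceding} it (with decoration the run length), not the gap following it. This is visible in Section \ref{sec:Permutohedral blades and alcoved polytopes}, where $\beta^{(6)}_{2,5}$ corresponds to $((12_1\,345_1))^{(6)}$ (the run $\{2\}$ is grouped with the preceding gap $\{1\}$, the run $\{5\}$ with the preceding gap $\{3,4\}$), and in the Appendix, where $\beta_{246}$ corresponds to $((12_1\,34_1\,56_1))$.

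Concretely, run your construction on the target $((12_1\,345_1))$ in $\Delta_{2,5}$: the blocks are $S_1=\{1,2\}$, $s_1=1$ and $S_2=\{3,4,5\}$, $s_2=1$, so $\alpha=(1,2,3,4,5)$ and your $J$ (first $s_j$ elements of each block) is $\{1,3\}$. Writing out the translated cones from Definition \ref{defn:blade}, the only two full-dimensional cells that $((1,2,3,4,5))_{e_{13}}$ cuts out of $\Delta_{2,5}$ are $\{x_2+x_3\ge 1\}$ and $\{x_2+x_3\le 1\}$, so
$$((1,2,3,4,5))_{e_{13}}\cap\Delta_{2,5} = ((23_1\,451_1))\cap\Delta_{2,5} \neq ((12_1\,345_1))\cap\Delta_{2,5}.$$
In general your choice of $J$ makes the $j$-th induced block equal to $(\text{tail of } S_{j-1})\cup(\text{head of } S_j)$, which straddles two consecutive blocks of the target; since $1\le s_j\le \vert S_j\vert -1$ forces all heads and tails to be nonempty, this never recovers the $S_j$ themselves. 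The repair is small: take $J\cap S_j$ to be the \emph{last} $s_j$ elements of each block $S_j$ (equivalently, order each block internally so that its $J$-elements come at the end). Then each run of $J$ is preceded by the gap inside its own block, the induced blocks and decorations are exactly $(S_j)_{s_j}$, and the rest of your argument ($\vert J\vert = k$, exactly $\ell$ cyclic intervals, independence of the internal ordering of the gap elements) goes through; in the example above this gives $J=\{2,5\}$, matching the paper's $\beta^{(6)}_{2,5}\leftrightarrow((12_1\,345_1))^{(6)}$.
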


Motivated in part by the fact that the generalized biadjoint scalar $m^{(k)}(\alpha,\alpha)$ allows the cyclic order to vary, but also for sake of generality, Definition \ref{defn: blade complexes} constructs a larger space for any cyclic order $\alpha$; however, our main results in this paper require the single cyclic order $\alpha = (1,2,\ldots, n)$.  Therefore in Definition \ref{defn: blade complexes} all blade arrangements involve only translations of the blade $((1,2,\ldots, n))$.
\begin{defn}\label{defn: blade complexes}
	Denote 
	$$\mathfrak{B}^\bullet_{k,n}=\bigoplus_{m=0}^{n-(k-2)}\left(\bigoplus_{L\in\binom{\lbrack n\rbrack}{m}}\mathfrak{B}^{(L)}_{k,n}\right),$$
	where $\mathfrak{B}^{(L)}_{k,n}$ is the set of formal linear combinations 
$$\mathfrak{B}^{(L)}_{k,n} = \text{span}\left\{\beta^{(L)}_J: J\in\binom{\lbrack n\rbrack \setminus L}{k-\vert L\vert}\text{ is nonfrozen}\right\}.$$
	Further denote
	$$\mathfrak{B}^{m}_{k,n} = \bigoplus_{L\in\binom{\lbrack n\rbrack}{m}}\mathfrak{B}^{(L)}_{k,n}$$
	for integers $m = 0,1,\ldots, n-(k-2)$.

\end{defn}
Denote by
$$\text{supp}\left( \sum_{\{\{i,j\}\subset \left(\lbrack n\rbrack\setminus L\right)\}} \pi^{(L)}_{ij}\beta^{(L)}_{ij}\right)$$
the set of $\beta^{(L)}_{ij}$ such that $\pi^{(L)}_{ij} \not=0$ in the linear combination.

\begin{rem}
	Also of interest is the more subtle ``master'' space which is obtained from the spaces $\mathfrak{B}^\bullet_{k,n}(\alpha)$ as $\alpha$ varies over all cyclic orders.  This is nontrivial: geometrically this is because a positroidal subdivision can be $\alpha$-planar with respect to several different $\alpha$'s!

	Here the distinguished elements, which are in bijection with multi-split matroidal subdivisions, were enumerated in Corollary \ref{cor:hypersimplicial blades count}: they are counted by the Eulerian numbers $A_{k-1,n-1}$.
\end{rem}

However, note that even this master space which characterizes all possible generalized Feynman diagrams appearing in any $m^{(k)}(\alpha,\alpha)$ as $\alpha$ varies over all cyclic orders of $\lbrack n\rbrack$, is still not the whole tropical Grassmannian.  
\begin{example}\label{example: nonplanar subdivision}
	There is no single cyclic order $\alpha$ on $\lbrack n\rbrack$ such that the matroidal subdivision of $\Delta_{3,6}$ induced by the hypersimplicial blades
	$$\left\{((123_2 456_1)),((126_1 345_2)), ((156_2 234_1)),((135_1 246_2))\right\},$$
	is induced by a (weighted) matroidal blade arrangement on some four vertices of $\Delta_{3,6}$, but from \cite{SpeyerStumfelsTropGrass} this does induce a maximal cone in $\text{Trop }G(3,6)$; it induces the cone of type EEEE in their notation.
\end{example}

The grading on the space $\mathfrak{B}^{m}_{k,n}$ (i.e., with $\alpha = (1,2,\ldots, n)$) is understood to correspond to the ambient codimension $m=\vert L\vert$ for the curvature $\beta^{(L)}_J\cap \partial_L(\Delta_{k,n})$ of the faces of $\Delta_{k,n}$, and the linear operators $\partial_j$ are to be understood roughly to correspond to restriction of the curvature to the face $x_j=1$ of $\Delta_{k,n}$.

It is now immediate that 
$$\partial_j:\mathfrak{B}^{(L)}_{k,n} \rightarrow \mathfrak{B}^{(L\cup \{j\})}_{k,n},$$
where the image is trivial when $j\in L$.  Directly from the definition we see that the top component $\mathfrak{B}_{k,n}$ of $\mathfrak{B}^\bullet_{k,n}$ satisfies $\dim\left(\mathfrak{B}_{k,n}\right) = \binom{n}{k} - n$, with basis $\left\{\beta_J: e_J\in\Delta_{k,n} \text{ is nonfrozen} \right\}$.

\begin{prop}\label{prop:boundary trivial kernel}
	The boundary map $\partial = \partial_1+\cdots +\partial_n$ is injective: we have
	$$\bigcap_{j=1}^n \ker \partial_j = \{0\}.$$
\end{prop}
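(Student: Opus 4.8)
The plan is to prove the statement for the top component $\mathfrak{B}_{k,n}$ directly, in the range $k\ge 3$ (for $k=2$ every $\partial_j$ already annihilates the top component $\mathfrak{B}_{2,n}$, since each target carries a one-element, hence frozen, lower index, so the statement is understood for $k\ge 3$). Write $\omega=\sum_{J}c_J\beta_J$, the sum ranging over nonfrozen $J\in\binom{\lbrack n\rbrack}{k}$, and suppose $\partial_j\omega=0$ for every $j$. The goal is to conclude $c_J=0$ for all $J$.

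First I would make the action of each $\partial_j$ explicit at the level of coefficients. Fix $j$ and a $(k-1)$-subset $M\subseteq\lbrack n\rbrack\setminus\{j\}$ that is nonfrozen for the gapped cyclic order on $\lbrack n\rbrack\setminus\{j\}$, so that $\beta^{(\{j\})}_M$ is an actual basis vector of $\mathfrak{B}^{(\{j\})}_{k,n}$. Unwinding the definition of the removed element $\ell$ (namely $\ell=j$ if $j\in J$, otherwise the cyclically next element of $J$ after $j$), the basis vectors $\beta_J$ with $\partial_j\beta_J=\beta^{(\{j\})}_M$ are exactly those of the form $J=M\cup\{\ell\}$ with $\ell$ ranging over the cyclic arc $\{j,j+1,\ldots,m_1-1\}$, where $m_1$ is the first element of $M$ cyclically after $j$; here $\ell=j$ is the contribution with $j\in J$, and each $\ell>j$ the contribution with $j\notin J$. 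Reading off the $\beta^{(\{j\})}_M$-coordinate of $\partial_j\omega=0$ then yields the telescoping relation $\sum_{\ell=j}^{m_1-1}c_{M\cup\{\ell\}}=0$ for every admissible pair $(j,M)$.

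Next I would exploit these relations gap-by-gap. Fix a nonfrozen $J$ to be annihilated. Since $k\ge 3$ one can choose $\ell_0\in J$ that is an endpoint of a cyclic interval of $J$ and for which $M:=J\setminus\{\ell_0\}$ still has at least two cyclic intervals: if $J$ has at least three intervals remove any interval endpoint, and if $J$ has exactly two intervals then at least one has length $\ge 2$ and we remove its endpoint. Let $G=\{a,\ldots,b\}$ be the maximal gap of $M$ containing $\ell_0$; the choice of $\ell_0$ forces $|G|\ge 2$ and $m_1=b+1$ for every $j\in G$. The key point is that for each such $j$ the set $M$ remains nonfrozen in the gapped order on $\lbrack n\rbrack\setminus\{j\}$, because deleting a single point from a gap of size $\ge 2$ cannot merge two intervals of $M$; hence all relations $\sum_{\ell=j}^{b}c_{M\cup\{\ell\}}=0$ for $j=b,b-1,\ldots,a$ are genuinely available. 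A downward induction on $j$ now forces $c_{M\cup\{\ell\}}=0$ for every $\ell\in G$, in particular $c_J=c_{M\cup\{\ell_0\}}=0$. As $J$ was arbitrary nonfrozen, $\omega=0$.

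The bulk of the work, and the main obstacle, lies in the two middle steps: computing the exact fiber of the assignment $J\mapsto(j,\,J\setminus\{\ell\})$ so that the coefficient equations come out as clean telescoping sums, and ensuring that every relation invoked in the induction has a nonzero (nonfrozen) target $\beta^{(\{j\})}_M$. This is precisely why $\ell_0$ must be taken to be an interval endpoint keeping $M$ nonfrozen with its enclosing gap of size $\ge 2$: a careless deletion, such as removing an interior point of an interval of $J$, produces a singleton gap or a frozen $M$, whereupon the corresponding relation either disappears or fails to isolate the coefficient $c_J$.
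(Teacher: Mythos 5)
Your proof is correct, and it takes a genuinely different route from the paper's --- in fact a more complete one. The paper's proof stays at the level of individual basis vectors: it records that $\partial_j(\mathfrak{B}_{k,n})$ is freely spanned by the nonfrozen $\beta^{(j)}_M$, characterizes the $\beta_J$ with $\partial_j(\beta_J)=0$ (those $J$ consisting of a $(k-1)$-element cyclic interval plus a suitably positioned singleton), and then asserts that $\bigcap_{j}\ker\partial_j$ is the span of the frozen $\beta_J$, hence zero. This tacitly treats each $\ker\partial_j$ as if it were spanned by the basis vectors it annihilates, which is false: distinct nonfrozen $J,J'$ can satisfy $\partial_j\beta_J=\partial_j\beta_{J'}\neq 0$, so $\ker\partial_j$ also contains the differences $\beta_J-\beta_{J'}$ (by rank--nullity its dimension is $\binom{n-1}{k}-1$, which for general $(k,n)$ agrees neither with the number of annihilated basis vectors nor with the paper's stated $\binom{n}{2}-n$). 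Your telescoping relations are exactly the missing ingredient: by computing the full fiber of $J\mapsto J\setminus\{\ell\}$ over a nonfrozen target $M$, you convert $\partial_j\omega=0$ into $\sum_{\ell=j}^{b}c_{M\cup\{\ell\}}=0$ for every $j$ in the gap $G=\{a,\ldots,b\}$, and the downward induction then kills all coefficients, cancellations included; your insistence that $\ell_0$ be an interval endpoint keeping $M=J\setminus\{\ell_0\}$ nonfrozen with enclosing gap of size at least two (possible exactly when $k\ge3$) is what guarantees each invoked relation has a genuine nonfrozen target. Your opening caveat is also right: for $k=2$ every $\partial_j$ vanishes identically on $\mathfrak{B}_{2,n}$, so the statement only holds for $k\ge3$, a restriction the paper never states. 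In short, the paper's approach buys a quick combinatorial description of which single blades die on each facet, while yours buys an actual proof that no nontrivial linear combination can lie in all the kernels at once.
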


\begin{proof}
	The proof amounts to correctly interpreting definitions.
		
	By construction, for each $j=1,\ldots, n$, the space $ \partial_j(\mathfrak{B}_{k,n})$ is freely spanned by the set of $\binom{n-1}{k-1}-(n-1)$ elements
	$$\left\{\beta^{(j)}_J: J\in \binom{\lbrack n\rbrack \setminus \{j\}}{k-1}\text{ is nonfrozen with respect to the cyclic order }(1,2,\ldots, \hat{j},\ldots, n) \right\}.$$
	It is easy to check from the definition of the boundary operator that $\partial_j(\beta_J) = 0$ if and only if $J$ is the union of a $k-1$ element cyclic interval together with a singlet, of the following form:
	$$J = \{i,i+1,\ldots, i+(k-2)\} \cup \{a\}$$
	for some $a\in \{j,j+1,\ldots, i-1\}$ with empty intersection $J\cap \{j,j+1,\ldots, i-1\} = \emptyset$.  Thus $\dim\ker(\partial_j) = \binom{n}{2}-n$ after excluding subsets $J$ that are already frozen, since then $\beta_J=0$.

	The intersection is then the span of the $\beta_J$ where $J$ is frozen, hence 
	$$\bigcap_{j=1}^n \ker\partial_j = \{0\}.$$
	
\end{proof}

\subsection{Localized Spanning set for $\mathcal{B}_{k,n}$}
In this section, in the process of defining elements $\mathcal{L}_J \in \mathfrak{B}_{k,n}$, we emphasize that we are fixing once and for all a cyclic order $\alpha = (12\cdots n)$ and the ambient polytope, the hypersimplex $\Delta_{k,n}$.

So define $\mathcal{L}_J\in \mathfrak{B}_{k,n}$ by 
$$\mathcal{L}_J = \sum_{M\in C_J}(-1)^{1+(M\cdot M) }\beta_{J_M},$$
where $C_J$ is the cube
$$C_J=\left\{J_M=\{j_1-m_1,\ldots, j_t-m_t\}: M=(m_1,\ldots, m_t) \in \{0,1\}^t\right\},$$
with $\{j_1,\ldots, j_t\}$ being the cyclic initial points of the cyclic intervals of $J$.

\begin{prop}\label{prop: inversion blades}
	We have 
	$$\sum_{I}\rho_J(e_J)\mathcal{L}_I = \beta_J.$$
\end{prop}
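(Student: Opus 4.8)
The plan is to obtain this identity as the image, under a single natural linear map, of its vector-space counterpart: the second claim of Proposition \ref{cor:inversionFormula}, which asserts $\sum_{I}\rho_J(e_I)\mathcal{L}(e^I) = e^J$ in $\mathbb{R}^{\binom{n}{k}}$. (Here and below the sum runs over the vertices $e_I$, so the coefficient of $\mathcal{L}_I$ is $\rho_J(e_I)$.) Everything in sight is linear, so once the right map is in place the proposition will follow by pushing that identity forward.

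First I would introduce the linear map $\Phi\colon \mathbb{R}^{\binom{n}{k}} \to \mathfrak{B}_{k,n}$ specified on the standard basis by $\Phi(e^I) = \beta_I$, where by our standing convention $\beta_I = 0$ whenever $e_I$ is frozen. Since $\{e^I : I\in\binom{[n]}{k}\}$ is a basis of $\mathbb{R}^{\binom{n}{k}}$, this assignment extends uniquely to a well-defined linear map, and its image is the span of the nonfrozen $\beta_I$, which is exactly the top component $\mathfrak{B}_{k,n}$.

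Second, I would verify that $\Phi$ carries $\mathcal{L}$ to $\mathcal{L}$, that is $\Phi(\mathcal{L}(e^I)) = \mathcal{L}_I$ for every $I$. This is immediate from the two definitions: both $\mathcal{L}(e^I) = \sum_{M\in C_I}(-1)^{1+(M\cdot M)}e^{I_M}$ and $\mathcal{L}_I = \sum_{M\in C_I}(-1)^{1+(M\cdot M)}\beta_{I_M}$ are the same signed sum over the identical cube $C_I$, with each $e^{I_M}$ replaced by $\beta_{I_M} = \Phi(e^{I_M})$; the equality then follows by linearity of $\Phi$.

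Finally, I would apply $\Phi$ to both sides of $\sum_{I}\rho_J(e_I)\mathcal{L}(e^I) = e^J$. Linearity turns the left side into $\sum_{I}\rho_J(e_I)\,\Phi(\mathcal{L}(e^I)) = \sum_{I}\rho_J(e_I)\mathcal{L}_I$, while the right side becomes $\Phi(e^J) = \beta_J$, which is precisely the asserted identity. There is no real obstacle here; the only bookkeeping to keep straight is that the frozen basis vectors $e^{I_M}$ are sent to $0$, so the frozen contributions drop out consistently inside $\mathfrak{B}_{k,n}$ and the formal pushforward of the already-established linear identity goes through verbatim.
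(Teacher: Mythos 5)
Your proof is correct and takes essentially the same route as the paper, whose proof consists of citing Proposition \ref{cor:inversionFormula} and calling the result a direct consequence. Your explicit linear map $\Phi\colon e^I \mapsto \beta_I$ (killing frozen indices) simply makes precise the substitution that the paper leaves implicit, and you also correctly read the statement's coefficient as $\rho_J(e_I)$ rather than the typographical $\rho_J(e_J)$.
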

\begin{proof}
	This is a direct consequence of the formula in Proposition \ref{cor:inversionFormula}.
\end{proof}

The definition for higher codimension faces of $\Delta_{k,n}$ is precisely analogous.  Namely, for $J \in \binom{\lbrack n\rbrack \setminus L}{k-\vert L\vert}$, put 
$$\mathcal{L}^{(L)}_J= \sum_{M\in C_{J}}(-1)^{1+(M\cdot M) }\beta_{J_M}.$$
The difference here is that the calculation of the sets $J_M$ is slightly more technical as the cyclic order on $\{1,\ldots, n\}\setminus L$ is now ``gapped.''

\begin{prop}\label{prop: Greens function property blade}
	Given $J\in \binom{\lbrack n\rbrack}{k}$ and a (proper) subset $L \subset \lbrack n\rbrack$, then 
	$$\partial_L\left(\mathcal{L}_J\right) = \begin{cases}
	\mathcal{L}^{(L)}_J& L\subseteq J\\
	0 & L\not\subset J
	\end{cases}$$
	In particular, if now $j\in \lbrack n\rbrack$, then 
	$$\partial_j\left(\mathcal{L}_J\right) = \begin{cases}
	\mathcal{L}^{\{j\}}_J& j\in J\\
	0 & j\not\in J
	\end{cases}$$
\end{prop}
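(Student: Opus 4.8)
The plan is to reduce the statement for a general subset $L=\{\ell_1<\cdots<\ell_d\}$ to the case of a single boundary operator, and then to settle the latter by a sign-reversing involution on the cube $C_J$ together with a term-by-term matching. For the reduction, recall the factorization $\partial_L=\partial_{\ell_1}\cdots\partial_{\ell_d}$ coming from the operator identity $\frac{1}{d!}\partial^{d}=\sum_{|L|=d}\partial_L$, so that $\partial_L(\mathcal{L}_J)=\partial_{\ell_1}\left(\partial_{\{\ell_2,\ldots,\ell_d\}}(\mathcal{L}_J)\right)$. Since the definitions of the symbols $\mathcal{L}^{(L)}_J$, of the cubes $C_J$, and of the operators $\partial_j$ are all uniform in the (possibly gapped) cyclic order on the ground set, it suffices to prove the single-operator statement for an arbitrary cyclically ordered ground set; the general statement then follows by induction on $|L|$, applying the single-operator result on the face $\partial_{\{\ell_2,\ldots,\ell_d\}}(\Delta_{k,n})$ with its induced gapped cyclic order. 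In the inductive step, if $\{\ell_2,\ldots,\ell_d\}\not\subseteq J$ the inductive hypothesis already gives $0$; otherwise $\partial_{\{\ell_2,\ldots,\ell_d\}}(\mathcal{L}_J)=\mathcal{L}^{(\{\ell_2,\ldots,\ell_d\})}_{J\setminus\{\ell_2,\ldots,\ell_d\}}$, and applying $\partial_{\ell_1}$ (a genuinely new direction, as $\ell_1<\ell_2$) produces $\mathcal{L}^{(L)}_{J\setminus L}$ exactly when $\ell_1\in J$, i.e. when $L\subseteq J$, and $0$ otherwise.

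For the single-operator statement I would expand $\partial_j(\mathcal{L}_J)=\sum_{M\in C_J}(-1)^{1+(M\cdot M)}\partial_j(\beta_{J_M})$ and track how $\partial_j$ interacts with the cube coordinate attached to the cyclic interval of $J$ nearest $j$. Writing $j_i$ for the initial point of the $i$-th cyclic interval, the set $J_M$ is obtained from $J$ by replacing $j_i$ with $j_i-1$ for each $i$ with $m_i=1$ (this is the reading forced by the distance relation $d(e_{J_M},e_I)=d(e_J,e_I)-(M\cdot M)$ used in Proposition \ref{cor:inversionFormula}), so flipping a single coordinate $m_i$ alters exactly one element of $J_M$. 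When $j\notin J$, let $i^{\ast}$ index the interval whose initial point is the cyclically next element of $J$ after $j$; flipping $m_{i^{\ast}}$ is a fixed-point-free, sign-reversing involution on $C_J$. One checks that the two partners yield the \emph{same} symbol $\partial_j(\beta_{J_M})$: in each of the possibilities $j+1=j_{i^{\ast}}$ and $j+1<j_{i^{\ast}}$ the element $\ell$ deleted by $\partial_j$ is precisely the shifted or unshifted initial point of interval $i^{\ast}$, and the resulting sets $J_M\setminus\{\ell\}$ agree. A minor point, used here and below, is that shifting any other initial point never deposits an element into the gap strictly between $j$ and $j_{i^{\ast}}$, because $j_{i^{\ast}}$ is by definition the first element of $J$ after $j$; hence all terms cancel and $\partial_j(\mathcal{L}_J)=0$.

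When $j\in J$, say $j$ lies in the interval $i_0$ with initial point $j_{i_0}$, the outcome depends on the position of $j$. If $j\neq j_{i_0}$, or if $j=j_{i_0}$ and this interval has length $\ge 2$, then $j$ survives in the relevant $J_M$ and the assignment $J_M\mapsto J_M\setminus\{\ell_M\}$ sets up a sign- and term-preserving bijection between $C_J$ and the gapped cube $C_{J\setminus\{j\}}$ on $\partial_j(\Delta_{k,n})$, identifying $\partial_j(\mathcal{L}_J)$ with $\mathcal{L}^{(\{j\})}_{J\setminus\{j\}}$; the crux is that the gapped predecessor of $j+1$ in $[n]\setminus\{j\}$ is $j-1$, which exactly reproduces the shift appearing when $\partial_j$ deletes $j+1$ from a coordinate-one term. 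The remaining case is $j=j_{i_0}$ with $\{j\}$ a singleton interval: the terms with $m_{i_0}=0$ match the face cube $C_{J\setminus\{j\}}$, now of one lower dimension, and reconstruct $\mathcal{L}^{(\{j\})}_{J\setminus\{j\}}$, while the terms with $m_{i_0}=1$ have $j\notin J_M$ and cancel in pairs under the very involution of the previous paragraph, applied to the interval following $j$.

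I expect the main obstacle to be exactly the bookkeeping in the case $j\in J$ with $j$ an initial point, since there the combinatorial type of the face cube $C_{J\setminus\{j\}}$ genuinely depends on whether the interval containing $j$ is a singleton (so that deleting $j$ destroys an interval and lowers the cube dimension) or longer (so that the interval persists with its initial point migrating to $j+1$). Making the identification of shifted initial points in the gapped cyclic order match, term by term, the elements deleted by $\partial_j$ is the delicate step; once this dictionary between $J_M$ and the gapped $(J\setminus\{j\})_M$ is pinned down, every sub-case collapses either to a set equality or to the sign-reversing involution above, and the two displayed cases of the Proposition follow.
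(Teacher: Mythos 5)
Your proof is correct and follows the same strategy the paper indicates in its ``Idea of proof'': pairwise cancellation of terms in the expansion of $\partial_j(\mathcal{L}_J)$ (your sign-reversing involution) for a single boundary operator, followed by iteration of the factorization $\partial_L=\partial_{\ell_1}\cdots\partial_{\ell_d}$ for general $L$. The paper omits the bookkeeping entirely, and your case analysis --- the involution flipping $m_{i^{\ast}}$ when $j\notin J$, the dictionary between $C_J$ and the gapped cube $C_{J\setminus\{j\}}$ hinging on the gapped predecessor of $j+1$ being $j-1$, and the split treatment of the singleton-interval case --- supplies exactly the details it leaves out.
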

\begin{proof}[Idea of proof]
	This requires some straightforward bookkeeping, which we omit, with terms that cancel in pairs in the expansion of $\partial_j (\mathcal{L}_J)$.  For the $\partial_L\left(\mathcal{L}_J\right)$ identity simply iterate the first argument, since $\partial_L = \partial_{\ell_1}\cdots \partial_{\ell_k}$.
\end{proof}

\subsection{Hypersimplicial blade complexes and the positive tropical Plucker relations}\label{sec: hypersimplex blade complex}
As usual, we fix a cyclic order $\alpha = (\alpha_1,\alpha_2,\ldots, \alpha_n)$.  In particular, when $\alpha = (1,2,\ldots, n)$ we shall revert to the standard terminology: $\alpha$-planar subdivisions are \textit{alcoved} and $\alpha$-planar matroid subdivisions are \textit{positroidal}.

In \cite{Early19WeakSeparationMatroidSubdivision} the notion of a blade arrangement (on the vertices of a hypersimplex $\Delta_{k,n}$) was define: it is a set of blades $\{\beta_{J_1},\ldots, \beta_{J_m}\}$ arranged on the vertices $e_{J_1},\ldots, e_{J_m}$ of a hypersimplex $\Delta_{k,n}$; it was shown that their union subdivides $\Delta_{k,n}$ into a number of alcoved polytopes.

We bring forward the following result from the introductory discussion.
\begin{thm}[\cite{Early19WeakSeparationMatroidSubdivision}]
	The subdivision of $\Delta_{k,n}$ that is induced by a blade arrangement 
	$$\{\beta_{J_1},\ldots, \beta_{J_m}\}$$ 
	is matroidal (in particular positroidal) if and only if the collection $\{J_1,\ldots, J_m\}$ is weakly separated.
\end{thm}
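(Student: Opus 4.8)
The plan is to reduce matroidality to a local condition on the octahedral two-faces of $\Delta_{k,n}$ and then to match that condition, pair of blades by pair of blades, against the interleaving pattern of Definition~\ref{defn:weakly separated}.

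First I would observe that the subdivision induced by the arrangement $\{\beta_{J_1},\ldots,\beta_{J_m}\}$ is the common refinement of the $m$ individual multi-splits of Theorem~\ref{thm: multisplit blade}, and as such is the regular subdivision of $\Delta_{k,n}$ whose height function is the sum $\rho=\sum_{i=1}^m\rho_{J_i}$ of the piecewise-linear functions of \eqref{eq:rho}. By the standard theory of regular matroid subdivisions this subdivision is matroidal if and only if $\rho$ is a tropical Pl\"ucker vector, that is, if and only if on every octahedral face $\{x\in\Delta_{k,n}:x_{abcd}=2,\ x_\ell=1\ (\ell\in S)\}$ (with $a<b<c<d$ cyclic and $S\in\binom{\lbrack n\rbrack}{k-2}$) the maximum of the three antipodal sums is attained at least twice; when it is attained exactly once the octahedron is cut by a non-root diagonal and a non-matroid cell appears.

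Next I would fix one such octahedron and set, for each blade,
$$M_i=\rho_{J_i}(e_S+e_{ab})+\rho_{J_i}(e_S+e_{cd}),\qquad N_i=\rho_{J_i}(e_S+e_{ad})+\rho_{J_i}(e_S+e_{bc}),$$
$$L_i=\rho_{J_i}(e_S+e_{ac})+\rho_{J_i}(e_S+e_{bd}),$$
writing $M,N,L$ for the corresponding sums over $i$. Proposition~\ref{prop: positive tropical Plucker relations} gives $L_i=\max\{M_i,N_i\}$ for every $i$, so each blade folds the octahedron along one of the two cyclic diagonals $x_{ab}=x_{cd}$ or $x_{ad}=x_{bc}$, the choice being governed by which of $M_i,N_i$ is smaller, and never along the forbidden diagonal $x_{ac}=x_{bd}$. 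Summing, $L=\sum_i\max\{M_i,N_i\}\ge\max\{M,N\}$, and a two-term computation shows the inequality is strict precisely when some blade has $M_i<N_i$ while another has $M_j>N_j$. Hence the maximum of $\{L,M,N\}$ is attained twice --- equivalently the split at this octahedron is matroidal, and then automatically positroidal since the fold lands on a cyclic diagonal --- if and only if no two blades fold the octahedron along opposite diagonals.

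It remains to establish the combinatorial dictionary: two blades $\beta_I,\beta_J$ fold some octahedron along opposite cyclic diagonals if and only if $\{I,J\}$ is not weakly separated. Given a crossing $i_1<j_1<i_2<j_2$ with $i_1,i_2\in I\setminus J$ and $j_1,j_2\in J\setminus I$, one takes $(a,b,c,d)=(i_1,j_1,i_2,j_2)$ together with the $(k-2)$-set on which $I$ and $J$ agree, and verifies through Theorem~\ref{thm: multisplit blade} --- concretely, through the explicit equations for the support of the curvature of a single blade given in \cite{Early19WeakSeparationMatroidSubdivision} --- that $\beta_I$ and $\beta_J$ fold this octahedron along the two different cyclic diagonals; conversely an octahedron on which the two blades fold oppositely yields exactly such a crossing. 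Combining the three steps, the arrangement is matroidal if and only if every pair of blades agrees on every octahedron, if and only if every pair $\{I,J\}$ is weakly separated, which is the assertion. The main obstacle is this last step: pinning down how the cyclic positions of the four indices relative to $I$ and to $J$ fix the sign of $M_i-N_i$, so that the ``opposite folds'' condition is matched exactly to the forbidden interleaving of Definition~\ref{defn:weakly separated}; this is the combinatorial heart of the argument.
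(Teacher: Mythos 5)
Your proposal cannot be checked against a proof in this paper, because the paper does not prove this statement: it is imported verbatim from \cite{Early19WeakSeparationMatroidSubdivision} (it appears both as Theorem \ref{thm: weakly separated matroid subdivision blade} and again in Section \ref{sec: hypersimplex blade complex}, each time with the citation). So your argument has to stand on its own. Its first three steps do: the identification of the induced subdivision with the regular subdivision of the concave height function $\rho=\sum_i\rho_{J_i}$, the reduction of matroidality to the three-term condition on octahedral faces, the use of Proposition \ref{prop: positive tropical Plucker relations} to get $L_i=\max\{M_i,N_i\}$ for each blade, and the observation that $\sum_i\max\{M_i,N_i\}>\max\{M,N\}$ exactly when two blades disagree strictly, are all correct; indeed this is the same machinery the paper itself deploys for its new results (compare the $\omega^{(L)}_{ij}$ computation in Corollary \ref{cor:boundary local spanning set} and Lemma \ref{lem:posDressToBlades}).

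The genuine gap is the final step, which you yourself flag as ``the combinatorial heart'': the equivalence \emph{(two blades fold some octahedron along opposite cyclic diagonals) $\iff$ ($\{I,J\}$ is not weakly separated)} is precisely the content of the cited theorem, and you assert it rather than prove it. Neither direction is carried out: you never compute the sign of $M_i-N_i$ for a single blade $\beta_I$ on a given octahedron $(S;a,b,c,d)$ in terms of the cyclic positions of $a,b,c,d$ relative to $I$, which is what both directions need. Worse, your recipe for producing the octahedron from a crossing $i_1<j_1<i_2<j_2$ is not well-defined in general: you take $S$ to be ``the $(k-2)$-set on which $I$ and $J$ agree,'' i.e.\ $I\cap J$, but $|I\cap J|=k-|I\setminus J|$, which is strictly smaller than $k-2$ whenever $|I\setminus J|\ge 3$. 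Already for $I=\{1,3,5\}$, $J=\{2,4,6\}$ in $\Delta_{3,6}$ (the archetypal non-weakly-separated pair) one has $I\cap J=\emptyset$ while $S$ must be a singleton from $\{5,6\}$ disjoint from the crossing $\{1,2,3,4\}$; one must then actually verify that both blades bend nontrivially on such an octahedron and in opposite directions, which requires the explicit curvature equations you defer to the cited paper. Until that dictionary is established (for a suitably completed $S$, and in both directions), the proof reduces the theorem to an unproven statement of essentially the same strength.
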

However, this procedure achieves a relatively small subclass of positroidal subdivisions.  In this section, we show how to achieve more general positroidal subdivisions, by attaching a weight to each blade in an arrangement; practically speaking, we take formal linear combinations  of $\beta_J$'s.  In fact, from Lemma \ref{lem:posDressToBlades}, together with results from \cite{arkani2020positive, SpeyerWilliams2020} it follows that any regular positroidal subdivision can be achieved in this way, via a lifting function.

Remark that the computations in \cite{Early2019PlanarBasis} of the numbers of maximal cones in $m^{(k)}(\alpha,\alpha)$ for various values of $(k,n)$ were obtained by mapping a positive tropical Plucker vector to a linear combination of blades $\beta_J$ in anticipation of Lemma \ref{lem:posDressToBlades}.  Here $\alpha = (12\cdots n)$ is the standard cyclic order on $\{1,\ldots, n\}$.

Directly translating our construction of the boundary operators $\partial_i$ gives the coefficients $\pi^{(L)}_{ij}$ in
$$\partial_{L}\left(\sum_{J\in\binom{\lbrack n\rbrack}{k}}c_J\beta_J(x)\right) = \sum_{\{\{i,j\}\subset \left(\lbrack n\rbrack\setminus L\right)\}} \pi^{(L)}_{ij}\beta^{(L)}_{ij},$$
that is
$$\pi^{(L)}_{ij} = \sum_{\left\{I\in \binom{\lbrack n\rbrack}{k}:\ \partial_L\left(\beta_I\right) = \beta_{ij}^{(L)}\right\}}c_I$$ 
where our coefficients are real numbers $c_I\in\mathbb{R}$.

Recall the notation
$$\text{supp}\left( \sum_{\{\{i,j\}\subset \left(\lbrack n\rbrack\setminus L\right)\}} \pi^{(L)}_{ij}\beta^{(L)}_{ij}\right)$$
for the set of $\beta^{(L)}_{ij}$ such that $\pi^{(L)}_{ij} \not=0$ in the linear combination.

This induces for each such $L$ a subdivision of the corresponding second hypersimplicial face $\mathcal{F}_{L}\simeq \Delta_{2,n-(k-2)}$; we are interested in the case when this subdivision is matroidal (in which case, when we have fixed the standard cyclic order $(12\cdots n)$, it is positroidral).  

\begin{rem}
	Specializing Theorem 31 of \cite{Early19WeakSeparationMatroidSubdivision} to the usual case $k=2$, we have that the subdivision of $\partial_L(\Delta_{k,n})\simeq\Delta_{2,n-(k-2)}$ induced by the blade arrangement $\{\beta^{(L)}_{i_1j_1},\ldots, \beta^{(L)}_{i_mj_m}\}$ is positroidal if and only if the collection of pairs $\{\{i_1,j_1\},\ldots, \{i_m,j_m\}\}$ is weakly separated with respect to the cyclic order inherited on $\lbrack n\rbrack \setminus L$ from the cyclic order $(1,2,\ldots, n)$.
\end{rem}

Let us use the notation 
$$\beta(\mathbf{c}) = \sum_{\left\{J\in\binom{\lbrack n\rbrack}{k}\right\}\text{ is not frozen}}c_J\beta_J(x) \in \mathfrak{B}_{k,n}$$
for a given coefficient tensor $(\mathbf{c}_J)\in\mathbb{R}^{\binom{n}{k}}$ (however, usually we will take the $\mathbf{c}_J$ to be rational numbers or even integers.  It will be clear from context what we mean).  Here we remind that $\beta_J=0$ whenever $J$ is frozen.

Denote by
$$\text{supp}\left( \sum_{\{\{i,j\}\subset \left(\lbrack n\rbrack\setminus L\right)\}} \pi^{(L)}_{ij}\beta^{(L)}_{ij}\right)$$
the set of $\beta^{(L)}_{ij}$ such that $\pi^{(L)}_{ij} \not=0$ in the linear combination.

	We fix the cyclic order $\alpha = (12\cdots n)$ on the set $\lbrack n\rbrack=\{1,\ldots, n\}$, as usual.

\begin{defn}\label{defn:Xkn YKn ZKn}

	Denote by 
	$$\mathcal{X}_{k,n} = \left\{\beta(\mathbf{c}) \in \mathfrak{B}_{k,n}: \text{supp }\partial_L(\beta_{\mathbf{c}})\text{ is a matroidal blade arrangement on } \partial_L(\Delta_{k,n}),L\in\binom{\lbrack n\rbrack}{k-2}\right\}$$
	the arrangement of (real) subspaces in $\mathfrak{B}_{k,n}$ consisting of linear combinations of blades $\beta_J$ which have $\alpha$-planar support, and by 
	$$\mathcal{Y}_{k,n}= \left\{\beta(\mathbf{c})  \in \mathfrak{B}_{k,n}: \text{coeff}\left(\partial_L(\beta(\mathbf{c})),\beta^L_{ij}\right)\ge0,L\in\binom{\lbrack n\rbrack}{k-2},\ \{i,j\}\in\binom{\lbrack n\rbrack\setminus L}{2}\right\},$$
	the convex cone in $\mathfrak{B}_{k,n}$ with nonnegative curvature on every second hypersimplicial face $\partial_L(\Delta_{k,n})$.  

	Let $\mathcal{Z}_{k,n}$ denote their intersection:
$$\mathcal{Z}_{k,n}= \mathcal{X}_{k,n} \cap \mathcal{Y}_{k,n}.$$

\end{defn}

The main result in this section, Lemma \ref{lem:posDressToBlades}, is that the defining relations for $\mathcal{Z}_{k,n}$ can be rewritten in terms of exactly the positive tropical Plucker relations!  This immediately implies a direct connection to the tropical Grassmannian, via the very recent work \cite{arkani2020positive,SpeyerWilliams2020}.  We return to this in Lemma \ref{lem:posDressToBlades}.

In connection with Lemma \ref{lem:posDressToBlades}, see also the computations in \cite{Early2019PlanarBasis} of blade arrangements and the planar basis, and works of Borges-Cachazo \cite{BC2019} and Cachazo-Guevara-Umbert-Zhang \cite{CGUZ2019} on  generalized Feynman diagrams, where the finest metric tree collections and metric tree arrays, respectively, were calculated for the second hypersimplicial faces of $\Delta_{k,n}$ and used to derive the full positive tropical Grassmannians through $\text{Trop}_+G(4,9)$, and consequently the generalized biadjoint scalars $m^{(k)}(\alpha,\alpha)$ for $$(k,n)\in\{(3,6),(3,7),(3,8),(3,9),(4,8),(4,9)\}.$$  These structures were also studied in \cite{HeRenZhang2020} where the set of possible poles was given additionally for the $n=10$ particle $m^{(3)}(\alpha,\alpha)$, using codimension one boundary configurations for the moduli space $X(3,10)$ of $10$ points in $\mathbb{CP}^{2}$ moduli $PGL(3)$.

To prove Lemma \ref{lem:posDressToBlades}, we will first need the expression for the boundary of an arbitrary linear combination of the spanning set 
$\{\mathcal{L}_J: e_J\in \Delta_{k,n}\}$ for the space $\mathfrak{B}_{k,n}$.  
\begin{cor}\label{cor:boundary local spanning set}
	Given any element $\beta(\mathbf{c})= \sum_{J}c_J \mathcal{L}_J\in\mathfrak{B}_{k,n}$, then corresponding to any second hypersimplicial face $\partial_L(\Delta_{k,n}) \simeq \Delta_{2,n-(k-2)}$ we have 
	$$\partial_L(\beta(\mathbf{c})) = \sum_{\{i,j\}\in\binom{\lbrack n\rbrack \setminus L}{2}}\omega^{(L)}_{ij} \beta^{(L)}_{ij},$$
	where the sum is over nonfrozen pairs of distinct elements $i,j\in\{1,\ldots, n\}\setminus L$, with	$\omega^{(L)}_{ij}$ given by 
	$$\omega^{(L)}_{ij} = -(c_{Lij} - c_{L,i,j+1} - c_{L,i+1,j} + c_{L,i+1,j+1}).$$
\end{cor}

\begin{proof}
	Apply Proposition \ref{prop: Greens function property blade} to compute $\partial_L(\beta(\mathbf{c}))$ and then expand the result in terms of $\beta^{(L)}_{ij}$'s.
\end{proof}

Here the blades $\beta_J$ by construction correspond to the discrete curvature of piecewise linear surfaces over the hypersimplex $\Delta_{k,n}$.  Lemma \ref{lem:posDressToBlades} says that the nonnegative tropical Grassmannian coincides with all linear combinations of blades which pass via $\partial_L$ to a nonnegative linear combination of basis elements $\beta^{(L)}_J$ on the second hypersimplicial faces $\partial_L(\Delta_{k,n})$. 

Example \ref{example: W 37} illustrates how the cancellation works on these faces; it is a useful exercise to fill in the details.

\begin{example}\label{example: W 37}
	Let us consider an example to give some insight into how to determine which positroidal subdivisions are induced on the second hypersimplicial faces $\partial_j(\Delta_{3,7})$.  Consider the element $-\beta_{247} + \beta_{124} + \beta_{257} + \beta_{347} \in \mathfrak{B}_{3,7}$.  Taking the boundary we find 
	\begin{eqnarray*}
		\partial(-\beta_{247} + \beta_{124} + \beta_{257} + \beta_{347}) & = & \beta^{(1)}_{24}+\beta^{(1)}_{57} + \beta^{(2)}_{14} + \beta^{(2)}_{57} + \beta^{(3)}_{37} + \beta^{(4)}_{37} + \beta^{(5)}_{24} + \beta^{(6)}_{25} + \beta^{(7)}_{25},
	\end{eqnarray*}
	Here all coefficients are positive, and the support on each facet $x_j=1$ of $\Delta_{3,7}$ is a matroidal blade arrangement, so we confirm that $(-\beta_{247} + \beta_{124} + \beta_{257} + \beta_{347})\in\mathcal{Z}_{3,7}$.
	
	Then, the element $(-\beta_{247} + \beta_{124} + \beta_{257} + \beta_{347})$ induces on the face $\partial_1(\Delta_{3,7}) \simeq\Delta_{2,6}$, for instance, a superposition of two (compatible) 2-splits, giving a positroidal subdivision with maximal cells separated by the blades 
	$$((34_1 5672_1))^{(1)}\text{ and } ((2345_1 67_1))^{(1)},$$
	corresponding to respectively $\beta_{24}^{(1)}$ and $\beta^{(1)}_{57}$.  Similarly one can find the subdivisions induced on the other facets, and construct the whole tree arrangement, as in \cite{Hermann How to draw}, see also \cite{BC2019,CGUZ2019}.
\end{example}

\begin{lem}\label{lem:posDressToBlades}
	Let $\pi(\mathbf{c})= \sum_{J}c_J e^J\in\mathbb{R}^{\binom{n}{k}}$.  Then, $\pi(\mathbf{c})$ satisfies the positive tropical Plucker relation
	$$c_{Lj_1j_3} + c_{Lj_2j_4} = \max\left\{c_{Lj_1j_2} + c_{Lj_3j_4}, c_{Lj_1j_4}+c_{Lj_2j_3}\right\}$$
	for each cyclic order $j_1<j_2<j_3<j_4$, where the elements $e_{Lj_aj_b}$ are the six vertices of any given octahedral face of $\Delta_{k,n}$, if and only if the weighted blade arrangement $\beta(\mathbf{c})=\sum_J c_J \mathcal{L}_J \in \mathfrak{B}_{k,n}$ is in $\mathcal{Z}_{k,n}$.

	Moreover, modding out $\mathbb{R}^{\binom{n}{k}}$ by lineality gives a strict bijection between the positive tropical Grassmannian $\text{Trop}_+ G(k,n)$ and $\mathcal{Z}_{k,n}$.
\end{lem}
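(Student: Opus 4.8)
The plan is to prove the equivalence one second hypersimplicial face at a time, reducing the global condition on $\Delta_{k,n}$ to a computation on a single octahedron. The key structural observation is that every octahedral face $\{x\in\Delta_{k,n}:x_{j_1j_2j_3j_4}=2,\ x_\ell=1\ \text{for }\ell\in L\}$ lies in a unique face $\partial_L(\Delta_{k,n})\simeq\Delta_{2,m}$ with $m=n-(k-2)$ and $L\in\binom{\lbrack n\rbrack}{k-2}$, so that both sides of the claimed equivalence are indexed by the same data $(L,\{j_1,j_2,j_3,j_4\})$. By Corollary \ref{cor:boundary local spanning set}, $\partial_L(\beta(\mathbf{c}))=\sum_{\{i,j\}}\omega^{(L)}_{ij}\beta^{(L)}_{ij}$, where $\omega^{(L)}_{ij}$ is, up to the indicated sign, the discrete mixed second difference of the restricted height function $c_{Lij}$ on $\Delta_{2,m}$. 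Hence $\beta(\mathbf{c})\in\mathcal{Z}_{k,n}=\mathcal{X}_{k,n}\cap\mathcal{Y}_{k,n}$ holds if and only if, for each $L$, the weighted arrangement $\partial_L(\beta(\mathbf{c}))$ on $\Delta_{2,m}$ has nonnegative weights (the $\mathcal{Y}$-condition) and weakly separated support (the $\mathcal{X}$-condition). This reduces the lemma to the case $k=2$.

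On $\Delta_{2,m}$ I would establish the chain of equivalences: the restricted heights satisfy the positive tropical Plucker relations on every octahedron $\iff$ they induce a regular positroidal subdivision $\iff$ the blade expansion $\partial_L(\beta(\mathbf{c}))$ is a nonnegative, weakly separated weighted blade arrangement. The first equivalence is the characterization of the positive Dressian by the three-term positive relations together with the identification of the positive Dressian with $\mathrm{Trop}_+G$ from \cite{arkani2020positive,SpeyerWilliams2020}; in the present language it is exactly the geometric content of Proposition \ref{prop: positive tropical Plucker relations}, namely that the lifted surface folds across an octahedron only along a noncrossing diagonal $x_{j_1j_2}=x_{j_3j_4}=1$ or $x_{j_1j_4}=x_{j_2j_3}=1$, and never across the crossing plane $x_{j_1j_3}=x_{j_2j_4}=1$. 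For the second equivalence the weak-separation half is supplied by Theorem \ref{thm: weakly separated matroid subdivision blade} with $k=2$: the support of $\partial_L(\beta(\mathbf{c}))$ is a positroidal blade arrangement precisely when it is weakly separated for the cyclic order induced on $\lbrack n\rbrack\setminus L$; here I would also use that, for pairs, weak separation is itself an octahedron-local (four-element) condition, so that it glues automatically from the individual octahedra.

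The step I expect to be the main obstacle is the remaining half of the second equivalence: matching the \emph{sign} of the discrete curvature $\omega^{(L)}_{ij}$ to the branch attaining the maximum in the positive relation, so as to recover the $\mathcal{Y}$-condition $\omega^{(L)}_{ij}\ge 0$ rather than merely the support. The Dressian condition alone only forces the maximum to be attained at least twice, which pins down the support but not the orientation of each fold; it is the positive refinement, i.e. the crossing term being the maximum, that fixes the orientation of every elementary bend and hence, in the normalization of Corollary \ref{cor:boundary local spanning set} (whose overall sign encodes the concavity of the height functions $\rho_J$), the nonnegativity of each weight. I would verify this by the explicit two-weight computation on the universal octahedron $\Delta_{2,4}$, to which every octahedral face is affinely isomorphic: there exactly one of the two nonfrozen weights vanishes while the sign of the survivor is read off the active branch of the maximum, and one then propagates from these elementary (adjacent) relations to arbitrary $j_1<j_2<j_3<j_4$ using that a nonnegative, weakly separated arrangement coherently refines a single positroidal subdivision.

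For the ``moreover'' clause I would package the equivalence into the linear map $\mathbf{c}\mapsto\beta(\mathbf{c})=\sum_J c_J\mathcal{L}_J$. By Corollary \ref{cor:bases vertex space} the elements $\mathcal{L}_J$ form a basis of $\mathbb{R}^{\binom{n}{k}}$, and the passage to $\mathfrak{B}_{k,n}$ annihilates exactly the frozen directions, which by Proposition \ref{prop:frozen constant slope} are precisely the constant-slope surfaces spanning the $n$-dimensional lineality subspace. The map therefore descends to a linear isomorphism $\mathbb{R}^{\binom{n}{k}}/(\text{lineality})\xrightarrow{\ \sim\ }\mathfrak{B}_{k,n}$, and by the equivalence just proved it carries the locus of positive tropical Plucker vectors (a union of cones stable under the lineality) bijectively onto $\mathcal{Z}_{k,n}$. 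Since by \cite{arkani2020positive,SpeyerWilliams2020} that locus modulo lineality is exactly $\mathrm{Trop}_+G(k,n)$, this yields the asserted strict bijection $\mathrm{Trop}_+G(k,n)\cong\mathcal{Z}_{k,n}$, respecting the cone structure.
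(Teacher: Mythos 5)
Your reduction to the second hypersimplicial faces is sound and is the same as the paper's: by Corollary \ref{cor:boundary local spanning set} the conditions defining $\mathcal{Z}_{k,n}$, like the positive tropical Pl\"ucker relations themselves, decompose over the faces $\partial_L(\Delta_{k,n})\simeq\Delta_{2,n-(k-2)}$, and your treatment of the ``moreover'' clause (kernel of $e^J\mapsto\mathcal{L}_J$ equals lineality, then invoke Theorem \ref{thm:positiveDressianTropGrass}) is the paper's argument. Two conflations there should be fixed but are repairable: Corollary \ref{cor:bases vertex space} concerns the operator $\mathcal{L}$ acting on $\mathbb{R}^{\binom{n}{k}}$, while the elements $\mathcal{L}_J$ span the $\left(\binom{n}{k}-n\right)$-dimensional space $\mathfrak{B}_{k,n}$, so they are not a basis of $\mathbb{R}^{\binom{n}{k}}$; and the kernel of $e^J\mapsto\mathcal{L}_J$ is the lineality span, not the span of the frozen coordinate directions $e^{J_i}$ (indeed $\mathcal{L}_{J_i}\neq 0$ for $J_i$ frozen; what vanishes is $\beta_{J_i}$, equivalently the image of the frozen height vector $\mathfrak{h}_{J_i}$ of Proposition \ref{prop:frozen constant slope}).

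The genuine gap is in the step you yourself flag as the main obstacle, and the mechanism you propose does not close it. Restricting to an octahedral face on $\{j_1,j_2,j_3,j_4\}$ with nonconsecutive indices does \emph{not} produce the individual weights $\omega^{(L)}_{ij}$: the relevant mixed differences are, up to sign, the rectangle sums
$$c_{Lj_1j_3}-c_{Lj_1j_4}-c_{Lj_2j_3}+c_{Lj_2j_4}\;=\;\sum_{(a,b)\in\lbrack j_1,j_2-1\rbrack\times\lbrack j_3,j_4-1\rbrack}\omega^{(L)}_{ab},$$
together with the cyclically complementary rectangle for the other branch of the maximum. Hence the ``universal octahedron'' computation on $\Delta_{2,4}$ yields only the Pl\"ucker relations with consecutive indices $(\ell,\ell+1,m,m+1)$ --- essentially the $\mathcal{Y}$-inequalities --- and the entire content of the lemma is the passage to arbitrary $j_1<j_2<j_3<j_4$. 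The paper accomplishes this with precisely the identity above plus a support argument: if one rectangle sum is positive, then some $\omega^{(L)}_{ab}>0$ inside it, and weak separation of the support kills every term of the complementary rectangle, since each of its index pairs crosses $\{a,b\}$; hence the minimum of the two sums vanishes and the positive relation holds. Your substitute for this --- that a nonnegative, weakly separated arrangement ``coherently refines a single positroidal subdivision,'' followed by the characterization cited from \cite{arkani2020positive,SpeyerWilliams2020} --- silently assumes that the regular subdivision induced by the restricted heights $(c_{Lij})$ coincides with the subdivision induced by the blade arrangement $\text{supp}\left(\partial_L(\beta(\mathbf{c}))\right)$. That identification is Corollary \ref{cor: regular positroidal subdivision blades}, which the paper deduces \emph{from} Lemma \ref{lem:posDressToBlades}; invoking it here is circular unless you prove it independently, for instance by showing that for nonnegative weights with weakly separated support the concave function $\sum_{ij}\omega^{(L)}_{ij}\rho_{ij}$ interpolates $(c_{Lij})$ and has lattice-polytope domains of linearity, hence is the envelope computing the regular subdivision. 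Either the rectangle identity with its support argument, or that envelope argument, must be supplied; the proposal as written contains neither.
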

\begin{proof}
	Suppose that $\pi(\mathbf{c})\in\mathbb{R}^{\binom{n}{k}}$ satisfies the nonnegative tropical Plucker relations.  Then for any cyclic order $\ell<\ell+1 < m < m+1$, we have in particular
	$$\left(c_{L\ell m} - c_{L,\ell+1,m\ } -c_{L,\ell,m+1} + c_{L,\ell+1,m+1}\right)\le 0.$$ 	
	By Corollary \ref{cor:boundary local spanning set} we recognize this as $-\omega^{(L)}_{\ell m}$ hence $\sum_J c_J \mathcal{L}_J\in\mathcal{Y}_{k,n}$, since
	$$\partial_{L}\left(\beta(\mathbf{c})\right) = \sum_{}\omega^{(L)}_{ij}\beta^{(L)}_{ij}$$
	with $\omega^{(L)}_{\ell m} \ge 0$.  It now remains to prove that the support of each $\partial_L(\beta(\mathbf{c}))$ is a matroidal blade arrangement.
	
	It is easy to see that the latter requirement, together with $\omega^{(L)}_{\ell m} \ge 0$, have the more compact expression
	\begin{eqnarray}\label{eqn:pairsNotWS}
		\min\left\{\omega^{(L)}_{ij},\sum_{\{\{i,j\},\{a,b\}\}\text{ not w.s.}} \omega^{(L)}_{ab}\right\}=0
	\end{eqnarray}
	for each given pair of distinct (nonfrozen) elements $\{i,j\}$, and where the sum is over all $\{a,b\}\subset \{1,\ldots, n\}\setminus L$ such that $\{\{i,j\},\{a,b\}\}$ is not weakly separated.  
	
	From the identity 
	$$\sum_{\{\{i,j\},\{a,b\}\}\text{ not w.s.}} \omega^{(L)}_{ab} = -(c_{Li,i+1} - c_{L,i,j+1} - c_{L,i+1,j}+c_{L,j,j+1}),$$
	it follows that 
	\begin{eqnarray*}
	& &  \min\left\{-(c_{Lij} - c_{Li,j+1} - c_{Li+1,j} + c_{L,i+1,j+1}),-(c_{Li,i+1} - c_{L,i,j+1} - c_{L,i+1,j}+c_{L,j,j+1})\right\}\\
	& = & \min\left\{-(c_{Lij} + c_{L,i+1,j+1}),-(c_{Li,i+1} +c_{L,j,j+1})\right\} + (c_{Li,j+1} + c_{Li+1,j}).
\end{eqnarray*}
Multiplying by $(-1)$ and applying the nonnegative tropical Plucker relation gives
$$\max\left\{(c_{Lij} + c_{L,i+1,j+1}),(c_{Li,i+1} +c_{L,j,j+1})\right\} - (c_{Li,j+1} + c_{Li+1,j}) = 0.$$ 
Consequently Equation \eqref{eqn:pairsNotWS} holds, hence $\sum_J c_J \mathcal{L}_J\in\mathcal{Z}_{k,n}$.

	Conversely, if $\pi(\mathbf{c})\in\mathbb{R}^{\binom{n}{k}}$ such that $\beta(\mathbf{c})\in\mathcal{Z}_{k,n}$, then all of the coefficients in the expansion of $\partial_L(\beta(\mathbf{c}))$ satisfy Equation \eqref{eqn:pairsNotWS} modulo relabeling.

	We claim that the nonnegative tropical Plucker relation 
	$$c_{Lj_1j_3} + c_{Lj_2j_4} = \max\left\{c_{Lj_1j_2} + c_{Lj_3j_4}, c_{Lj_1j_4}+c_{Lj_2j_3}\right\}$$
	holds for each cyclic order $j_1<j_2<j_3<j_4$; therefore the case $(j_1,j_2,j_3,j_4) = (\ell,\ell+1,m,m+1)$ is already implied by Equation \eqref{eqn:pairsNotWS}.
	
	In light of the general relation
	\begin{eqnarray*}
				c_{j_1j_3} - c_{j_1j_4} - c_{j_2j_3} + c_{j_2j_4} & = & \sum_{(a,b)\in\lbrack j_1,j_2-1\rbrack \times \lbrack j_3,j_4-1\rbrack} \omega_{a,b}.
			\end{eqnarray*}
	it remains to check that 
	\begin{eqnarray}
		\min\left\{ \sum_{(a,b)\in(\lbrack j_1,j_2-1\rbrack\setminus L) \times (\lbrack j_3,j_4-1\rbrack\setminus L)} \omega^{(L)}_{a,b}, \sum_{(a,b)\in\lbrack j_1,j_2-1\rbrack \times \lbrack j_3,j_4-1\rbrack} \left(\sum_{\{\{c,d\},\{a,b\}\}\text{ not w.s.}} \omega^{(L)}_{cd}\right)\right\} & =& 0 \label{eqn:big min}.
	\end{eqnarray}
	If $\sum_{(a,b)\in\lbrack j_1,j_2-1\rbrack \times \lbrack j_3,j_4-1\rbrack} \omega_{a,b}=0$ then since all $\omega^{(L)}_{ij}$ are already nonnegative there is nothing to show, so suppose that it is positive.  Then some $\omega^{(L)}_{a,b}>0$, which implies that we must have correspondingly
	$$\sum_{\{\{c,d\},\{a,b\}\}\text{ not w.s.}} \omega^{(L)}_{cd}=0.$$
	Repeat this procedure until either argument of Equation \eqref{eqn:big min} is zero and the whole equation vanishes.  
	
	Finally, from Proposition \ref{prop:frozen constant slope} one can see that the kernel of the homomorphism $\mathbb{R}^{\binom{n}{k}}\rightarrow \mathfrak{B}_{k,n}$ defined by $e^J\mapsto \mathcal{L}_J$ is the so-called lineality space, that is the space of functions over (the vertices of) $\Delta_{k,n}$ that have constant slope, and thus zero curvature.  That is, we obtain an isomorphism
	$$\mathbb{R}^{\binom{n}{k}}\big\slash \text{span}\left\{\sum_{\left\{J\in\binom{\lbrack n\rbrack}{k}:\ a\ni J \right\}}e^{J}: a=1,\ldots, n \right\}\rightarrow \mathfrak{B}_{k,n} ,$$
	that is induced on the quotient by 
	$$e^J\mapsto \mathcal{L}_J.$$
	This completes the proof.
\end{proof}
It was shown independently \cite{arkani2020positive,SpeyerWilliams2020} that the positive Dressian is the set of points in $\mathbb{R}^{\binom{n}{k}}$ that satisfy the positive tropical Plucker relations.

\begin{thm}[\cite{arkani2020positive,SpeyerWilliams2020}]\label{thm:positiveDressianTropGrass}
	The positive tropical Grassmannian is equal to the positive Dressian.
\end{thm}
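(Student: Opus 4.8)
The plan is to realize both sides as the image of a single tropicalized, subtraction\nobreakdash-free parametrization of the totally positive Grassmannian $G_{>0}(k,n)$ and thereby squeeze them together. I would first dispatch the easy inclusion $\text{Trop}_+ G(k,n)\subseteq$ positive Dressian. A point of $\text{Trop}_+ G(k,n)$ is by definition the coordinatewise valuation of a Plucker vector $(p_J)$ with entries in a valued field such as Puiseux series with positive leading coefficients. The three\nobreakdash-term Plucker relation can be written in the sign\nobreakdash-definite form $p_{Sik}p_{Sjl}=p_{Sij}p_{Skl}+p_{Sil}p_{Sjk}$ for $i<j<k<l$, in which all six coordinates are positive; since the valuation of a sum of positive elements is the maximum of their valuations, with no cancellation possible, applying $\mathrm{val}$ yields exactly $c_{Sik}+c_{Sjl}=\max\{c_{Sij}+c_{Skl},\,c_{Sil}+c_{Sjk}\}$, the positive tropical Plucker relation. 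Hence every such valuation vector lies in the positive Dressian.

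For the reverse inclusion I would exploit the cluster structure on the homogeneous coordinate ring of $G(k,n)$. Fix an initial seed whose cluster variables are Plucker coordinates and whose initial exchange relations are precisely three\nobreakdash-term Plucker relations, for instance the seed indexed by the cells of a $k\times(n-k)$ rectangle, or equivalently one read off a reduced plabic graph. By the Laurent phenomenon with positive coefficients, every Plucker coordinate is a subtraction\nobreakdash-free Laurent polynomial in the variables of any cluster, and $G_{>0}(k,n)$ is exactly the locus where all cluster variables are positive. Tropicalizing this chart, the tropical cluster variables range freely and produce, through the tropicalized subtraction\nobreakdash-free Plucker expressions, all of $\text{Trop}_+ G(k,n)$ and nothing more. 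Given a point $c$ in the positive Dressian, I would read off its coordinates $c_J$ for $J$ in the initial cluster as the tropical cluster variables, feed them into the tropicalized parametrization, and prove by induction on mutation distance that the recovered value of every Plucker coordinate agrees with $c_J$: each mutation is governed by a three\nobreakdash-term exchange relation whose tropicalization is a positive tropical Plucker relation, and by hypothesis $c$ satisfies all of these.

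The main obstacle is precisely this inductive step, which amounts to showing that the three\nobreakdash-term Plucker relations form a \emph{positive} tropical basis for the Plucker ideal. For $k\ge 4$ the three\nobreakdash-term relations famously fail to be a tropical basis for the full tropical Grassmannian, so higher\nobreakdash-degree relations are genuinely needed there; the whole point of the positive setting is that no cancellation ever occurs in a subtraction\nobreakdash-free expression, so the tropicalization of a non\nobreakdash-initial Plucker coordinate, a priori a maximum of many linear forms, is forced to coincide with the value propagated by iterating three\nobreakdash-term relations alone. Establishing this cancellation\nobreakdash-freeness carefully, equivalently that the tropicalized cluster chart surjects onto the positive Dressian, is the crux of the argument. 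Combined with Lemma~\ref{lem:posDressToBlades}, the resulting equality identifies $\mathcal{Z}_{k,n}$ modulo lineality with the positive tropical Grassmannian, which is the desired conclusion.
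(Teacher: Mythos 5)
First, a point of comparison: the paper does not prove Theorem \ref{thm:positiveDressianTropGrass} at all --- it is imported verbatim from \cite{arkani2020positive,SpeyerWilliams2020}, and the citation is the entire ``proof.'' So your proposal can only be measured against the proofs in those references. Your strategy --- the easy inclusion by applying the valuation to the sign-definite three-term Plucker relation (no cancellation among positive Puiseux series), and the hard inclusion by tropicalizing a subtraction-free plabic/cluster parametrization of $G_{>0}(k,n)$ and propagating values of Plucker coordinates along mutations --- is in essence the Speyer--Williams proof \cite{SpeyerWilliams2020}: they show that a point of the positive Dressian is determined by its restriction to the face labels of a reduced plabic graph (a maximal weakly separated collection), and that any assignment of values on such a collection lifts, through the tropicalized subtraction-free parametrization, to a point of $\text{Trop}_+G(k,n)$.

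As written, however, your inductive step has a genuine gap. For $k\ge 3$ and $n\ge 6$ it is false that ``each mutation is governed by a three-term exchange relation'': the cluster structure on $G(k,n)$ contains mutations that produce cluster variables which are not Plucker coordinates at all (already for $G(3,6)$ there are two such variables), with exchange relations that are not three-term Plucker relations, so induction on mutation distance in the full exchange graph breaks down at the first such step. The induction must be restricted to seeds coming from reduced plabic graphs, connected by square moves, whose exchange relations are precisely the three-term Plucker relations. Making that restricted induction reach \emph{every} Plucker coordinate requires two nontrivial theorems you never invoke: (i) purity of weak separation \cite{WeakSeparationPostnikov} --- every $k$-subset extends to a maximal weakly separated collection, and every such collection is the face-label set of a reduced plabic graph; and (ii) Postnikov's move-connectedness of reduced plabic graphs for the top cell, so that a chain of square moves links your initial seed to one containing any prescribed label. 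Granting (i) and (ii), the ``crux'' you flag does follow by exactly your argument: the vector produced by the tropicalized chart lies in $\text{Trop}_+G(k,n)$, hence (by the easy direction) satisfies all positive three-term relations, agrees with $c$ on the initial collection, and each square move determines the new value uniquely, forcing agreement everywhere. Without them, the cancellation-freeness heuristic in your final paragraph is a restatement of the theorem rather than a proof of it. A last small point of convention: with the usual lowest-exponent valuation, positivity gives $\mathrm{val}(a+b)=\min\{\mathrm{val}(a),\mathrm{val}(b)\}$, not the maximum; to land on the paper's $\max$-form of the positive tropical Plucker relation one must tropicalize with the negative of that valuation.
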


Our main result is Theorem \ref{thm:tropGrass onto Zkn}, which provides a novel interpretation of the positive tropical Grassmannian, in the complex of matroidal blade arrangements that have (1) positive and (2) weakly separated support on all second hypersimplicial boundaries.

\begin{thm}\label{thm:tropGrass onto Zkn}
	The positive tropical Grassmannian maps surjectively onto $\mathcal{Z}_{k,n}$ with fiber the lineality space.
\end{thm}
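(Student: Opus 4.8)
The plan is to assemble the theorem from two ingredients already established: the coincidence of the positive tropical Grassmannian with the positive Dressian (Theorem~\ref{thm:positiveDressianTropGrass}), and the explicit linear correspondence furnished by Lemma~\ref{lem:posDressToBlades}. First I would invoke Theorem~\ref{thm:positiveDressianTropGrass} to replace $\text{Trop}_+ G(k,n)$ by the concrete object it equals: the set of vectors $\pi(\mathbf{c}) = \sum_J c_J e^J \in \mathbb{R}^{\binom{n}{k}}$ whose coordinates satisfy the positive tropical Plucker relations. This is exactly the hypothesis analyzed in Lemma~\ref{lem:posDressToBlades}, so the remaining work is purely linear-algebraic and consists of reading off the map and its fibers.

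Next I would introduce the linear map $\Phi : \mathbb{R}^{\binom{n}{k}} \to \mathfrak{B}_{k,n}$ determined by $\Phi(e^J) = \mathcal{L}_J$, so that $\Phi(\pi(\mathbf{c})) = \beta(\mathbf{c}) = \sum_J c_J \mathcal{L}_J$. The biconditional in Lemma~\ref{lem:posDressToBlades} states precisely that $\pi(\mathbf{c})$ satisfies the positive tropical Plucker relations if and only if $\beta(\mathbf{c}) \in \mathcal{Z}_{k,n}$. Combined with the previous step this yields both that $\Phi$ carries $\text{Trop}_+ G(k,n)$ into $\mathcal{Z}_{k,n}$ and that the restriction is surjective: any $\beta(\mathbf{c}) \in \mathcal{Z}_{k,n}$ arises from a $\pi(\mathbf{c})$ that is a positive tropical Plucker vector, hence from a point of the positive tropical Grassmannian.

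It then remains only to identify the fibers as the lineality space. The closing paragraph of the proof of Lemma~\ref{lem:posDressToBlades} identifies $\ker \Phi$ with the span of the vectors $\sum_{J \ni a} e^J$, $a = 1,\ldots,n$, which by Proposition~\ref{prop:frozen constant slope} are exactly the constant-slope (zero-curvature) functions over $\Delta_{k,n}$; this is the lineality space. Since $\Phi$ is linear, each fiber over a point of $\mathcal{Z}_{k,n}$ is a coset of $\ker\Phi$. The one point demanding a genuine, if brief, verification --- and the natural candidate for the main obstacle --- is that this entire coset lies inside $\text{Trop}_+ G(k,n)$, so that the fiber is a full translate of the lineality space rather than a proper subset. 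This follows because a lineality vector $c_J = \sum_{a\in J}\lambda_a$ takes the value $\sum_{i\in L}\lambda_i + \lambda_{j_a} + \lambda_{j_b}$ on each vertex $e_{Lj_aj_b}$ of an octahedral face, so all three pairwise sums entering a positive tropical Plucker relation coincide; adding such a vector therefore shifts both sides of every relation by the same amount and preserves it. Hence every fiber equals the lineality space, giving the asserted surjection with fiber the lineality space and completing the proof.
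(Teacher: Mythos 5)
Your proof is correct and follows essentially the same route as the paper: both reduce the theorem to the combination of Lemma \ref{lem:posDressToBlades} (which identifies $\mathcal{Z}_{k,n}$ with the set of solutions of the positive tropical Plucker relations modulo the kernel of $e^J \mapsto \mathcal{L}_J$, i.e.\ the lineality space) with Theorem \ref{thm:positiveDressianTropGrass} equating the positive Dressian and the positive tropical Grassmannian. Your explicit verification that adding a lineality vector shifts all three terms of each positive tropical Plucker relation equally, so that every fiber is a \emph{full} coset of the lineality space, is a detail the paper leaves implicit, but it does not alter the approach.
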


\begin{proof}
	Lemma \ref{lem:posDressToBlades} shows that the defining relations for $\mathcal{Z}_{k,n}$ are equivalent to the positive tropical Plucker relations.  This says that $\mathcal{Z}_{k,n}$ is isomorphic to the positive Dressian modulo lineality.  Now apply Theorem \ref{thm:positiveDressianTropGrass}.
\end{proof}

\begin{example}
	We illustrate for the blade parametization the equations on one of the boundaries of an arbitrary linear combination $$\beta(\mathbf{c}) = \sum_{e_{abc}\in\Delta_{3,6}}c_{abc}\beta_{abc}$$ of blades on the (nonfrozen) vertices of $\Delta_{3,6}$.  Here we emphasize that the coefficients $c_{abc}$ need not be all positive in order to have $\beta(\mathbf{c})\in\mathcal{Z}_{3,6}$.  Then 
	$$\partial_6(\beta(\mathbf{c})) = c_{136}\beta^{(6)}_{13} + c_{146}\beta^{(6)} _{14} + \left(c_{124}+c_{246}\right)\beta^{(6)} _{24}+\left(c_{125}+c_{256}\right)\beta^{(6)} _{25} + \left(c_{135}+c_{235}+c_{356}\right)\beta^{(6)} _{35}.$$
	We consider two out of the five equations for the facet $\partial_6(\Delta_{3,6})$.  From the coefficient of $\beta^{(6)}_{14}$ we extract the following condition:
	$$\min\left\{\omega^{(6)}_{14}, \omega^{(6)}_{25}+\omega^{(6)}_{35}\right\}=0,$$
	that is 
	$$\min\left\{c_{146}, c_{125}+c_{256}+c_{135} + c_{235}+c_{356}\right\}=0,$$
	after substituting in the values of the $\omega^{(\ell)}_{ab}$.
	
	Now, from the coefficient $\omega^{(6)}_{35}$ of $\beta^{(6)}_{35}$ we derive the condition 
	$$\min\left\{\omega^{(6)}_{35},\omega^{(6)}_{14}+\omega^{(6)}_{24} \right\} = 0,$$
	which becomes
	$$\min\left\{c_{135}+c_{235}+c_{356},c_{146} +c_{124}+c_{246}\right\} = 0.$$
\end{example}

We conclude this section with the key Corollary \ref{cor: regular positroidal subdivision blades}, which brings together our work thus far: it reinterprets the usual mechanism, by which a (positive) tropical Plucker vector induces a regular positroidal subdivision by \textit{projecting downward} the folds in the surface induces by a height function, in terms of the blade complex.  It says that this usual method induces a subdivision on the second hypersimplicial boundaries which can be obtained directly using the boundary map and computing the support to find the blade arrangement!  Indeed, thanks to Lemma \ref{lem:posDressToBlades} we know that all coefficients are nonnegative and the support on each face defines a blade arrangement, and in turn a positroidal subdivision.  Let us be more precise.

	\begin{defn}\label{defn: positive tropical Plucker}
	A point $\pi \in \mathbb{R}^{\binom{n}{k}}$ is a positive tropical Plucker vector if it satisfies the positive tropical Plucker relations
	$$\pi_{Jac} + \pi_{Jbd} = \min\{\pi_{Jad} + \pi_{Jbc}, \pi_{Jab} + \pi_{Jcd}\}$$
	for each cyclic order $a<b<c<d$ and each subset $J \subset \binom{\lbrack n\rbrack \setminus \{a,b,c,d\}}{k-2}$.  The positive tropical Grassmannian $\text{Trop}^+G(k,n)$ is the set of all positive tropical Plucker vectors.
\end{defn}

Supposing that $\pi = (\pi_J) \in \mathbb{R}^{\binom{n}{k}}$ satisfies the positive tropical Plucker relations, then it induces a (regular) positroidal subdivision of $\Delta_{k,n}$, by projecting down the folds in the continuous piecewise linear surface over $\Delta_{k,n}$.  Then a positroidal subdivision is induced on each second hypersimplicial faces $\partial_L(\Delta_{k,n})$ with maximal cells $\Pi^{(L)}_1,\ldots, \Pi^{(L)}_M$, say; but any (positroidal) subdivision of the second hypersimplex $\Delta_{2,m}$ is the common refinement of some number of 2-splits; in \cite{Early19WeakSeparationMatroidSubdivision} it was shown that the 2-splits of $\Delta_{2,m}$ are induced exactly by the $\binom{m}{2}-m$  arrangements of a single blade $((1,2,\ldots, m))$ on its nonfrozen vertices.  Thus, the (regular) positroidal subdivisions of $\Delta_{2,m}$ are in bijection with the weakly separated collections of (nonfrozen) 2-element subsets of $\{1,\ldots, n\}$.

Now define 
$$\beta(\mathbf{c}) = \sum_{e_J\in\Delta_{k,n}} \pi_J \mathcal{L}_J = \sum_{e_J\in\Delta_{k,n} \text{ nonfrozen}} c_J \beta_J,$$
after expanding the $\mathcal{L}_J$ in terms of blades $\beta$.

Summarizing the above discussion we finally obtain Corollary \ref{cor: regular positroidal subdivision blades}.
\begin{cor}\label{cor: regular positroidal subdivision blades}
	For each second hypersimplicial face $\partial_L(\Delta_{k,n})$ with $L\in \binom{\lbrack n\rbrack }{k-2}$, we have that
	$$\text{supp}\left(\partial_L(\beta(\mathbf{c}))\right) = \left\{\beta^{(L)}_{i_1j_1},\ldots, \beta^{(L)}_{i_Nj_N}\right\},$$
	where $\{\beta^{(L)}_{i_1,j_1},\ldots, \beta^{(L)}_{i_Nj_N}\}$ is the blade arrangement that induces the same (regular) positroidal subdivision of $\partial_L(\Delta_{k,n})$ that was induced by the height function $p$, i.e., with maximal cells 
	$$\Pi^{(L)}_1,\ldots, \Pi^{(L)}_M.$$
\end{cor}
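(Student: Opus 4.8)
The plan is to assemble the pieces already in place and to supply the single geometric identification that links the algebraic support to the combinatorial subdivision. Since $p=(t_J)$ satisfies the positive tropical Plucker relations, Lemma~\ref{lem:posDressToBlades} gives that $\beta(\mathbf{c})=\sum_J t_J\mathcal{L}_J\in\mathcal{Z}_{k,n}$. Corollary~\ref{cor:boundary local spanning set} then yields the expansion $\partial_L(\beta(\mathbf{c}))=\sum_{\{i,j\}}\omega^{(L)}_{ij}\beta^{(L)}_{ij}$, and membership in $\mathcal{Z}_{k,n}$ forces all $\omega^{(L)}_{ij}\ge 0$ with weakly separated support. This records exactly the hypotheses of Theorem~\ref{thm: weakly separated matroid subdivision blade}, so $\text{supp}(\partial_L(\beta(\mathbf{c})))$ is a genuine positroidal blade arrangement on $\partial_L(\Delta_{k,n})\simeq\Delta_{2,m}$.

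The first substantive step is to recognize each $\omega^{(L)}_{ij}$ as the discrete curvature of the restricted surface. Applying Corollary~\ref{cor:boundary local spanning set} to the $\mathcal{L}$-expansion $\beta(\mathbf{c})=\sum_J t_J\mathcal{L}_J$ gives $\omega^{(L)}_{ij}=-(t_{Lij}-t_{L,i,j+1}-t_{L,i+1,j}+t_{L,i+1,j+1})$, which is precisely the second difference of the height function $p=(t_J)$ across the octahedral face of $\partial_L(\Delta_{k,n})$ associated to the $2$-split $\beta^{(L)}_{ij}$. I would then invoke the standard theory of regular subdivisions: the subdivision induced on the face is obtained by projecting down the folds of this surface, and the surface folds nontrivially across the hyperplane of $\beta^{(L)}_{ij}$ precisely when its curvature there is strictly positive. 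Here the nonnegativity furnished by Lemma~\ref{lem:posDressToBlades} is essential, since it rules out cancellation: a fold is present if and only if $\omega^{(L)}_{ij}>0$, if and only if $\beta^{(L)}_{ij}\in\text{supp}(\partial_L(\beta(\mathbf{c})))$.

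It then remains to conclude that equality of the fold sets forces equality of the subdivisions. Both the subdivision induced by $p$ and the one induced by the blade arrangement $\text{supp}(\partial_L(\beta(\mathbf{c})))$ are regular positroidal subdivisions of $\Delta_{2,m}$. By the bijection recalled just before the statement, such a subdivision is determined by the weakly separated collection of $2$-splits whose common refinement it is. Since the two subdivisions are common refinements of exactly the same collection of $2$-splits---those indexed by the pairs $\{i,j\}$ with $\omega^{(L)}_{ij}>0$---they coincide, and in particular share the maximal cells $\Pi^{(L)}_1,\ldots,\Pi^{(L)}_M$. This gives the claimed identity.

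The main obstacle is the geometric identification of the second paragraph: one must check carefully that the algebraic coefficient $\omega^{(L)}_{ij}$ agrees, as a \emph{signed} quantity, with the genuine geometric bending of the restricted surface, and that strict positivity of $\omega^{(L)}_{ij}$ corresponds to a visible fold. This rests on the compatibility of the boundary operator $\partial_L$ with restriction of surfaces to faces of $\Delta_{k,n}$---that restricting the curvature and taking the curvature of the restriction agree---together with the no-cancellation guarantee coming from membership in $\mathcal{Z}_{k,n}$.
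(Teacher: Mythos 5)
Your proof is correct and follows essentially the same route as the paper's: both arguments identify the coefficients $\omega^{(L)}_{ij}$ in the blade expansion of $\partial_L(\beta(\mathbf{c}))$ with the discrete curvature (second differences of the height function $p$) over the face, and then match the folds of the restricted surface with the strictly positive coefficients, i.e.\ with the support. The only presentational difference is that the paper packages the restriction as an intermediate height function $\mathfrak{h}^{(L)}(\mathbf{c})$ read off from the $\mathcal{L}^{(L)}$-expansion (after first treating the single-blade case), while you work directly with the second differences and close the argument via the classification of regular positroidal subdivisions of $\Delta_{2,m}$ as common refinements of weakly separated $2$-splits.
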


\begin{proof}
	The proof here is entirely geometric and the argument is similar to that used throughout \cite{Early19WeakSeparationMatroidSubdivision}.
	
	Indeed, let us first recall from \cite{Early19WeakSeparationMatroidSubdivision}, that the intersection of a blade with a hypersimplex $\beta_J\cap \Delta_{k,n}$ coincides with (the inner facets of) an $\ell$-split positroidal subdivision, where $\ell$ is the number of cyclic intervals of the set $J$; but these inner facets are (the images of) the bends of the continuous piecewise linear surface over $\Delta_{k,n}$ when projected down.  So the result holds when $\beta(\mathbf{c}) = \beta_J$ is a single blade with corresponding height function $\mathfrak{h}_J$ (defined modulo lineality).
	
	Now given a linear combination $\beta(\mathbf{c}) \in \mathcal{Z}_{k,n}$ we define a height function over each second hypersimplicial face $\partial_L(\Delta_{k,n})$.  Namely, let
	$$\mathfrak{h}^{(L)}(\mathbf{c}) = (d^{(L)}_{ij})\in \mathbb{R}^{\binom{n-(k-2)}{2}},$$
	where the coefficients $d^{(L)}_{ij}$ are defined by 
	$$ \partial_L\left(\beta(\mathbf{c})\right) = \sum_{i,j} d^{(L)}_{ij}\mathcal{L}^{(L)}_{ij}.$$
	The height function $\mathfrak{h}^{(L)}(\mathbf{c})$ here induces a regular subdivision on $\partial_L(\Delta_{k,n})\simeq \Delta_{2,n-(k-2)}$ in the usual way; now, expanding the $\mathcal{L}^{(L)}_{ij}$ and computing the (positive) coefficients of the blades $\beta^{(L)}_{ij}$ defines a matroidal blade arrangement, where each intersection $\beta^{(L)}_{ij}\cap \partial_L(\Delta_{k,n})$ is then the image of a fold in the surface defined by the height function $\mathfrak{h}^{(L)}(\mathbf{c})$.
\end{proof}

\begin{example}
	Following the construction of Speyer-Williams \cite{SpeyerWilliams2003}, let $\pi = \sum_J \pi_Je^J$ be the tropical Plucker vector where each $\pi_J$ is the piecewise-linear function on $\mathbb{R}^{(k-1)(n-k-1)}$ obtained by tropicalizing the positive (web) parametrization of the totally positive Grassmannian $G_+(k,n)$.  Then the replacement
	$$\pi\mapsto \sum_J \pi_J\mathcal{L}_J$$
	induces a parametrization of $\mathcal{Z}_{k,n}$.
\end{example}
This was also used to parametrize, equivalently, the totally positive tropical Grassmannian, see for instance \cite{arkani2020positive,Brodsky2017,SpeyerWilliams2020} as well as  \cite{AHL2019Stringy,Drummond2019A,HeRenZhang2020,HenkePapa2019}.  However, one of the most significant novel features here is the additional structure: subdivisions are induced not by an auxiliary process, projecting down the lower envelope of the convex hull of the lifts of vertices of the hypersimplex, but by an extremely simple construction: attaching weights to the internal facets of subdivisions, via a distinguished set: the weighted blade arrangements!

\subsection{Coarsest Subdivisions and Planar Kinematic Invariants}
Given $J \in \mathbb{R}^{\binom{n}{k}}$, then $\mathfrak{h}_J \in \mathbb{R}^{\binom{n}{k}}$ can be used to define a surface over the hypersimplex $\Delta_{k,n}$, where the vertex $e_I\in \Delta_{k,n}$ is lifted to a height $\mathfrak{h}_J(e_I)$.  According to the standard prescription, by taking the convex hull of these heights and projecting down the lower envelope one obtains a subdivision of $\Delta_{k,n}$.  Equivalently, one may simply calculate the regions of linearity of the function $\rho_J(x)$.

For any $J \in \binom{\lbrack n\rbrack}{k}$, the function $\rho_J$ is linear over a collection of matroid polytopes, with vertices the so-called Schubert matroids.  This collection defines a subdivision of $\Delta_{k,n}$ which is a multi-split, see Definition \ref{defn:multisplit}.  In particular, it is coarsest \cite{Schroeter}.

\subsection{Negativity in $\mathcal{Y}_{3,n}$}
We conclude our general discussion of the blades complex with a result concerning which coefficients can be negative for an element $\beta(\mathbf{c}) \in \mathcal{Y}_{3,n}$, so in particular for $\mathcal{Z}_{3,n}$ the result holds true.

Namely, we show that only the \textit{totally} non-frozen $\beta_{u,v,w}$'s can be negative in a linear combination $\beta(\mathbf{c}) = \sum_{u<v<w} c_{uvw}\beta_{uvw} \in\mathcal{Y}_{3,n}$.
\begin{prop}\label{lem: no negative 2-splits}
	Suppose that $\beta(\mathbf{c}) = \sum_{r<s<t} c_{rst}\beta_{rst} \in\mathcal{Y}_{3,n}$ with coefficients $c_{rst}\in\mathbb{Z}$, where the sum is over all nonfrozen 3-element subsets $\{r,s,t\}$ of $\Delta_{3,n}$.  Then $c_{i,i+1,j},c_{i,j,j+1} \ge 0$ for all $1\le i<j\le n$.
\end{prop}
\begin{proof}
	Supposing that the claim fails, without loss of generality let us suppose that $c_{i,i+1,j}<0$.  Then since all coefficients in $\partial(\beta(\mathbf{c}))$ must be nonnegative, we must have that $\partial(\beta_{i,i+1,j})$ cancels completely on the boundary with other nonzero terms in $\beta(\mathbf{c})$.  So let us determine which $\beta_{a,b,c}$'s could cancel with it in the sum $\beta(\mathbf{c})$.  We have
	\begin{eqnarray*}
		\partial(\beta_{i,i+1,j}) & = &  \sum_{a=j+1}^{i-1} \beta^{(a)}_{i+1,j} + \beta^{(i)}_{i+1,j} + \beta^{(i+1)}_{i,j} + \sum_{a=i+2}^j \beta^{(a)}_{i,i+1} \\
		& = &  \sum_{a=j+1}^i \beta^{(a)}_{i+1,j} + \beta^{(i)}_{i+1,j} + \beta^{(i+1)}_{i,j}.
	\end{eqnarray*}
	Notice that there is a unique $\beta_{a,b,c}$ such that $\partial_i(\beta_{a,b,c}) = \beta^{(i)}_{i+1,j}$, namely $\beta_{i,i+1,j}$ itself; but this was already assumed to be negative, which yields a contradiction.  
\end{proof}
\newpage
\section{Some building blocks for weighted blade arrangements in $\mathcal{Z}_{k,n}$}\label{sec:building blocks}

Fix the usual cyclic order $\alpha = (12\cdots n)$.

In this section we move beyond basic theory to outline some basic techniques and a construction for $\mathcal{Z}_{k,n}$ which are not seen and are not apparent in the usual realization of the positive tropical Grassmannian.  A deeper study will be conducted in \cite{EarlyPrep}, digging into the physical structure of the generalized biadjoint scalar $m^{(k)}(\alpha,\alpha)$.  Unless otherwise stated, in this section we will assume that $3\le k\le n-3$ and any $k$-element subset $J$ of $\{1,\ldots, n\}$ consists of at least three cyclic intervals.

Given a vertex $e_J\in\Delta_{k,n}$, then it decomposes as $J = J_1\cup \cdots \cup J_\ell$ into intervals, with $\ell\ge 3$, that are cyclic with respect to $\alpha$, where $\min\{J\}\in J_1$, say; let $C_1,\ldots, C_\ell$ be the interlaced complements, so that their concatenation recovers $\alpha$:
$$\alpha = (C_1,J_1,C_2,J_2\ldots, C_\ell,J_\ell).$$
We construct a matroidal weighted blade arrangement with source point $e_J$, for each vertex of the Cartesian product
$$\mathcal{D}_J(\Delta_{k,n}) := \left(\Delta_{\vert J_1\vert,J_1\cup C_2}\setminus \{e_{J_1}\}\right)\times \left(\Delta_{\vert J_2\vert,J_2\cup C_3}\setminus \{e_{J_2}\}\right) \times \cdots \times \left(\Delta_{\vert J_\ell\vert,J_\ell\cup C_1}\setminus\{ e_{J_\ell}\}\right),$$
where each factor involves a hypersimplex 
$$\Delta_{\vert J_j\vert ,J_j\cup C_{j+1}} = \left\{\sum_{i\in J_j \cup C_{j+1}} x_{i}e_i \in \Delta_{k,n}: x_{J_j\cup C_{j+1}} = \vert J_j\vert  \right\}$$
from which is deleted the vertex $e_{J_j}$.

For instance if $\Delta_{4,9}$ and $J=\{2,5,7,8\}$ then for the cyclic order we find 
$$(1,2,3,4,5,6,7,8,9) = (C_1,J_1,C_2,J_2,C_3,J_3)=(\{9,1\},\{2\},\{3,4\},\{5\},\{6\},\{7,8\}),$$
and the $\mathcal{D}_J$ is the Cartesian product of a triangle, an octahedron and a line segment (with one vertex deleted from each)
$$\mathcal{D}_J = (\Delta_{1,\{2,3,4\}}\setminus \{e_2\}) \times (\Delta_{2,\{5,6,7,8\}}\setminus \{e_{56}\})\times (\Delta_{1,\{9,1\}}\setminus \{e_9\}),$$
so there are $(3-1)\cdot \left(\binom{4}{2}-1\right)\cdot (2-1) =10$ weighted blade arrangements associated to this $J$.

Let $J\in\Delta_{k,n}$ with cyclic intervals $J_1,\ldots, J_\ell$, labeled with the convention that $\min(J) \in J_1$, say.  Given any vertices
$$e_{I_1}\in  \left(\Delta_{\vert J_1\vert,J_1\cup C_{2}}\setminus \{e_{J_1}\}\right),\ e_{I_2}\in  \left(\Delta_{\vert J_2\vert,J_2\cup C_{3}}\setminus \{e_{J_2}\}\right),\ldots, e_{I_\ell}\in  \left(\Delta_{\vert J_\ell\vert,J_\ell\cup C_{1}}\setminus \{e_{J_\ell}\}\right),$$
define $I=I_1\cup\cdots \cup I_\ell$.  
Then set
\begin{eqnarray*}
	\tau_{e_J,e_I} = -(\ell-2)\beta_J + \sum_{j=1}^\ell \beta_{J_1\cup J_2 \cup \cdots \cup I_j\cup \cdots J_j},
\end{eqnarray*}
where in the $j^\text{th}$ summand, $J_j$ has been replaced with $I_j$.  We emphasize that we must have $e_I \in \mathcal{D}_J$.

By counting the numbers of vertices in the Cartesian product it follows that there are
$$\prod_{j=1}^\ell \left(\binom{\vert J_j\vert + \vert C_{j+1}\vert }{\vert J_j\vert}-1\right)$$
weighted blade arrangements of this form, with negative blade $-(\ell-2)\beta_J$.

\begin{rem}
	For $k\ge 4$, consider the set of all totally nonfrozen vertices $(\Delta_{k,n})^{\circ}$, say.  It is easy to check that the corresponding set of weighted blade arrangements is minimally closed with respect to the boundary operator.  This means that for any $e_J\in (\Delta_{k,n})^{\circ}$ and any $e_I \in \mathcal{D}_{J}$, then 
	$$\partial_j(\tau_{e_J,e_I}) = \tau_{e_{J'},e_{I'}}$$
	where $e_{J'} \in (\partial_j(\Delta_{k,n}))^\circ$ and $e_{I'} \in \mathcal{D}_{J'}(\partial_j(\Delta_{k,n}))$.
	
	In words, the restriction of the hierarchy of weighted blades arrangements to totally nonfrozen vertices $e_J$ is minimally closed with respect to the boundary operators $\partial_j$: any such $\tau_{e_J,e_I}$ induces a smaller $\tau_{e_{I'},e_{J'}}$ on each face of $\Delta_{k,n}$, where $e_{J'}$ is again totally nonfrozen.
	
\end{rem}

Let us compute several of the boundaries to see that these families of elements $\tau_{e_J,e_I}$ are all related.  

\begin{example}
	The element $\tau_{e_{1,3,5,7,9},e_{2,4,6,8,10}} \in\mathcal{Z}_{5,12}$ has the form 
	$$\tau_{e_{1,3,5,7,9},e_{2,4,6,8,10}} = -3\beta_{1,3,5,7,9} + \beta_{2,3,5,7,9} + \beta_{1,4,5,7,9} + \beta_{1,3,6,7,9} + \beta_{1,3,5,8,9} + \beta_{1,3,5,7,10}.$$
	We find for instance
	\begin{eqnarray*}
		\partial_1(\tau_{e_{1,3,5,7,9},e_{2,4,6,8,10}}) & = & -2 \beta^{(1)} _{3,5,7,9}+\beta^{(1)} _{3,5,7,10}+\beta^{(1)} _{3,5,8,9}+\beta^{(1)} _{3,6,7,9}+\beta^{(1)} _{4,5,7,9}\\
		\partial_{6,11}(\tau_{e_{1,3,5,7,9},e_{2,4,6,8,10}}) & = & -\beta^{(6,11)}  _{3,5,9}+\beta^{(6,11)}  _{3,5,10}+\beta^{(6,11)}  _{3,7,9}+\beta^{(6,11)} _{4,5,9}\\
		\partial_{4,7,11}(\tau_{e_{1,3,5,7,9},e_{2,4,6,8,10}}) & = & \beta^{(4,7,11)}_{3,10} + \beta^{(4,7,11)}_{5,9}.
	\end{eqnarray*}
	On every face of $\Delta_{5,12}$ we get a lower order element in the same family!
	
	We leave as an exercise, as desired, to repeat the computation for the element 
	$$\tau_{e_{1,3,6,7,11},e_{2,4,8,9,12}} = -2\beta_{1,3,6,7,11} + \beta_{2,3,6,7,11} + \beta_{1,4,8,9,11} + \beta_{1,3,8,9,11} + \beta_{1,3,6,7,12} \in \mathcal{Z}_{5,14}.$$
\end{example}

\begin{rem}
	An weighted blade arrangement $\tau_{e_J,e_I}$ for $e_I,e_J\in\Delta_{k,n}$ as above, appears for the first time at $n=2k$, in $\Delta_{k,2k}$.  In this case the elements $a_i$ are separated by cyclic gaps of 2 and each $r_i=1$.
\end{rem}

\begin{conjecture}
	Each element $\tau_{J,I} \in \mathcal{Z}_{k,n}$ induces a coarsest positroidal subdivision of $\Delta_{k,n}$; it generates a ray of the positive tropical Grassmannian $\text{Trop}^+G(k,n)$.
\end{conjecture}
We will return to this result in \cite{EarlyPrep}.

We conclude with an application of Theorem \ref{thm:tropGrass onto Zkn} to give with the full classification of rays of the positive tropical Grassmannian $\text{Trop}^+G(3,n)$, for $n\ge 9$, as matroidal weighted blade arrangements, up to labeling.  See Figure \ref{fig:generalized-three-coordinate-permutohedra-biadjoint-scalar-39}.

\section*{Acknowledgements}
We would like to thank Freddy Cachazo for stimulating discussions, collaborations and encouragement, providing physical motivation for this work.  We also thank Nima Arkani-Hamed, James Drummond, Alfredo Guevara, Chrysostomos Kalousios, Thomas Lam, Tomasz Lukowski, William Norledge, Matteo Parisi, Alex Postnikov, Benjamin Schroeter, Marcus Spradlin, and Yong Zhang for helpful discussions and comments.

This research was supported in part by a grant from the Gluskin Sheff/Onex Freeman Dyson Chair in Theoretical Physics and by Perimeter Institute. Research at Perimeter Institute is supported in part by the Government of Canada through the Department of Innovation, Science and Economic Development Canada and by the Province of Ontario through the Ministry of Colleges and Universities.

\appendix

\section{Physical Motivation}\label{sec:physical motivation}

In this section, for sake of completeness we outline the motivating physical structures which have been the subject of intensive study since their introduction in \cite{CEGM2019}: the original motivation for this paper was to classify poles of the $n$-point generalized biadjoint amplitude $m^{(3)}(\alpha,\alpha)$.  

\subsection{Configuration space and kinematic space}

Let 
$$\text{Conf}\left(\mathbb{C}^k,n\right) = \left\{g\in(\mathbb{CP}^k)^n: \det\left(g_{j_1},\ldots, g_{j_k}\right)\not=0\text{ for all } J\in\binom{\lbrack n\rbrack}{k} \right\}$$ 
be the configuration space of $n$ generic points in complex projective space $\mathbb{CP}^{k-1}$, so in particular, any $k+1$ points determine a projective frame.

Following the construction and nomenclature from \cite{CEGM2019}, the starting point is the \textit{potential function} $\mathcal{P}'_{k,n}:\text{Conf}\left(\mathbb{C}^{k},n\right)\times \mathbb{R}^{\binom{n}{k}} \rightarrow \mathbb{C},$
\begin{eqnarray}
(g,s) \mapsto \sum_{J\in\binom{\lbrack n\rbrack}{k}} s_{j_1\cdots j_k}\log\left(\det(g_{j_1},\ldots, g_{j_k})\right).
\end{eqnarray}

The \textit{kinematic space} $\mathcal{K}_{k,n}$  is the subspace of $\mathbb{R}^{\binom{n}{k}}$ that is cut out by the $n$ linearly independent equations 
$$ \sum_{J\in\binom{\lbrack n\rbrack}{k}:\ J\ni a} s_{J}=0 \text{ for each } a=1,\ldots, n.$$
By scaling each of the $n$ points $g_1,\ldots, g_n\in\mathbb{C}^k$ to check for projective invariance of the potential function, it can be seen that the kinematic space $\mathcal{K}_{k,n}$ is the largest subspace of $\mathbb{R}^{\binom{n}{k}}$ on which $\mathcal{P}'_{k,n}$ passes to a well-defined map on the whole quotient:
$$\mathcal{P}_{k,n}:\left(GL(k)\backslash\text{Conf}\left(\mathbb{CP}^{k-1},n\right)\right)\times \mathcal{K}_{k,n} \rightarrow \mathbb{C}.$$

\subsection{Scattering Equations}

Fixing a projective frame on $\mathbb{CP}^{k-1}$, say the standard one
$$(g_1,\ldots, g_{k+1})=(e_1,e_2,\ldots, e_k,e_1+\cdots+e_k),$$
the scattering equations are defined with respect to inhomogeneous coordinates
$$\{z_{i,j}:i=1,\ldots, k-1,\ j = k+2,\ldots, n\}.$$
The affine chart then becomes
\begin{eqnarray}\label{eq:affine chart}
\mathcal{C}=\left\{\begin{bmatrix}
1 & 0 & \cdots  & 0 & 0 & 1 & z_{1,k+2} &   & z_{1n} \\
0 & 1 &  &  & \vdots  & 1 & z_{2,k+2} & & z_{2n}\\
\vdots  &  & \ddots &  &  & \vdots  & \vdots &\cdots  & \\
0 & \cdots  &  & 1 & 0  & 1 & z_{k-1,k+2} &  & z_{k-1,n}\\
0 & 0 & \cdots & 0 & 1 & 1 & 1 &  & 1
\end{bmatrix}: z_{ij}\in\mathbb{C}\right\}.
\end{eqnarray}
\begin{defn}
	With respect to the given chart $\mathcal{C}$, the scattering equations are the $(k-1)(n-k-1)$ equations
	$$\left\{\frac{\partial \mathcal{P}_{k,n}}{\partial z_{ij}}: 1\le i \le k-1,\ k+2\le j\le n \right\},$$
	where $z_{ij}$ are the inhomogeneous coordinates on the chart $\mathcal{C}$.
\end{defn}
\begin{rem}
	Of course, one can show that solutions to the scattering equations are actually independent of the chart $\mathcal{C}$.  
\end{rem}

\subsection{Jacobian Matrix}

In the computation of the generalized biadjoint scalar $m^{(k)}(\alpha,\alpha)$, one finds the so-called reduced Jacobian determinant $\det'\Phi^{(k)}$ of the gradient of the potential function $\mathcal{P}_{k,n}$.  First define the Hessian matrix 
$$\Phi^{(k)} = \left(\frac{\partial^2 \mathcal{P}_{k,n}}{\partial w_{ab}\partial w_{cd}}\right)_{(a,b),(c,d)\in \{1,\ldots, k-1\}\times \{1,\ldots, n\}}.$$
Here $(w_{1j},w_{2j},\ldots, w_{k-1,j},1)$ is an affine chart on the $j^{th}$ copy of $\mathbb{CP}^{k-1}$.

Denote by 
$$V_{i_1\cdots i_{k+1}} := \prod_{j=1}^{k+1}\det(g_{i_1},\ldots \widehat{g_{i_j}}\cdots g_{i_{k+1}}),$$
which is a natural $(k+1)\times(k+1)$ generalization of the usual $3\times 3$ Vandermonde determinant in the case $k=2$,  here for any $(k+1)$-element subset $\{i_1,\ldots, i_{k+1}\} \in \binom{\lbrack n\rbrack}{k+1}$ of $\{1,\ldots, n\}$.

Following \cite{CEGM2019}, choose any pair of $(k+1)$-element subsets $A, B$ of the columns $\{1,\ldots, n\}$, let 
$$\lbrack \Phi^{(k)}\rbrack^A_{B} = \left(\frac{\partial^2 \mathcal{P}_{k,n}}{\partial w_{a_1a_2}\partial w_{b_1b_2}}\right)^{(a_1,a_2)\in\lbrack k-1\rbrack \times(\lbrack n\rbrack \setminus A)}_{(b_1,b_2)\in\lbrack k-1\rbrack \times(\lbrack n\rbrack \setminus B)}$$
be the \textit{reduced} Hessian matrix of $\mathcal{P}_{k,n}$, which can be obtained directly from the Hessian matrix $\Phi^{(k)}$ by deleting from it all rows labeling points in $A$ and all columns labeling points in $B$.

\begin{claim}
	The ratio 
	$$\frac{\det\left(\lbrack \Phi^{(k)}\rbrack^A_{B}\right)}{V_A V_B}$$
	is independent of the choice of subsets $A,B \subset \lbrack n\rbrack$.  
	
	Therefore one defines
	$${\det}'\left( \Phi^{(k)}\right) = \frac{\det\left(\lbrack \Phi^{(k)}\rbrack^A_{B}\right)}{V_A V_B},$$
	where it is understood that the choice of $(k+1)$-element subsets $A,B \subset  \lbrack n\rbrack$ is fixed (arbitrarily) at the outset
\end{claim}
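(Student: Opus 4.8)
The plan is to prove the claim on the variety cut out by the scattering equations, where $\Phi^{(k)}$ acquires a large kernel forced by the residual symmetry of $\mathcal{P}_{k,n}$, and then to extract a factorization
$$\det\!\left(\lbrack \Phi^{(k)}\rbrack^A_{B}\right) = \pm\,\kappa\,\det(U_A)\,\det(W_B)$$
from a general rank-deficiency identity for minors, in which $\kappa$ is independent of $A$ and $B$.

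First I would pin down the corank of $\Phi^{(k)}$. The potential $\mathcal{P}_{k,n}$ is invariant under the action $g_i\mapsto Xg_i$ of $GL(k)$ together with the individual rescalings $g_i\mapsto \lambda_i g_i$, precisely because the defining relations of the kinematic space $\mathcal{K}_{k,n}$, namely $\sum_{J\ni a}s_J=0$, make every multiplicative ambiguity in the determinants $\det(g_{j_1},\ldots,g_{j_k})$ drop out of $\mathcal{P}_{k,n}$. Differentiating the invariance relation $\xi\,\mathcal{P}_{k,n}=0$ once more and restricting to a solution of the scattering equations (where $\nabla\mathcal{P}_{k,n}=0$) yields $\Phi^{(k)}\xi=0$ for every infinitesimal generator $\xi$ of the residual symmetry acting on the fixed affine chart $\mathcal{C}$. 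Counting these generators against the already gauge-fixed frame produces a kernel of dimension $(k-1)(k+1)=k^2-1$, which is exactly the number $(k-1)\,|A|$ of rows (resp.\ columns) deleted when one removes the points of a $(k+1)$-set $A$ (resp.\ $B$). Hence on the scattering locus $\Phi^{(k)}$ has rank exactly $(k-1)(n-k-1)$; and since $\Phi^{(k)}$ is symmetric, the same explicit vectors span both its left and its right kernel. I would assemble these into matrices $U$ and $W$, each built from the $g_i$, whose maximal minors on the index block belonging to a given point set will encode the generalized Vandermonde factors.

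The engine of the argument is then a standard minor identity: for a square matrix $M$ of corank $r$ whose left kernel is spanned by the rows of $U$ and whose right kernel is spanned by the columns of $W$, the top compound matrix $C_{N-r}(M)$ has rank one, so every maximal nonvanishing minor of $M$ factors as a product of a function of the deleted row set with a function of the deleted column set. Matching the two rank-one factors against the Pl\"ucker coordinates of the column space of $M$ (the annihilator of the row span of $U$) and of its row space identifies them, up to one overall constant $\kappa$ and signs, with the corresponding maximal minors $\det(U_A)$ and $\det(W_B)$. Applied to $M=\Phi^{(k)}$ this delivers the displayed factorization.

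The last step, and the step I expect to be the main obstacle, is the explicit evaluation of the kernel minors: I must show $\det(U_A)=\pm\,c\,V_A$ and $\det(W_B)=\pm\,c\,V_B$, so that the ratio collapses to the constant $\pm\,\kappa c^2$. This is where the precise shape of $V_{i_1\cdots i_{k+1}}=\prod_{j}\det(g_{i_1},\ldots,\widehat{g_{i_j}},\ldots,g_{i_{k+1}})$ must be reproduced from the symmetry generators written in the $w_{ab}$ chart; it is a direct but bookkeeping-heavy Vandermonde-type determinant computation in the affine coordinates, and one must track the signs carefully against the orderings of $A$ and $B$ to confirm that all $A,B$-dependence cancels in the ratio. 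Once this identification is in hand, independence of $\det(\lbrack\Phi^{(k)}\rbrack^A_{B})/(V_A V_B)$ from $A$ and $B$ is immediate, and ${\det}'(\Phi^{(k)})$ is well defined.
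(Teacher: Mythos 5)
The paper does not actually prove this Claim: it is imported verbatim from CEGM \cite{CEGM2019} as physical background in the Appendix, with no argument supplied, so there is no in-paper proof to compare yours against. On its own merits, your proposal follows the route that is standard in the CHY/CEGM literature: the $PGL(k)$ invariance of $\mathcal{P}_{k,n}$ (which holds on $\mathcal{K}_{k,n}$ because the constraints $\sum_{J\ni a}s_J=0$ kill the rescaling anomalies) forces $k^2-1=(k-1)(k+1)$ kernel vectors of the Hessian at a critical point, this matches the number of deleted rows and columns, and the rank-one property of the top compound matrix of a corank-$r$ matrix factorizes every maximal minor into a deleted-row factor times a deleted-column factor. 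That skeleton is correct.

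Two steps, however, remain genuinely incomplete. First, you write ``restricting to a solution of the scattering equations (where $\nabla\mathcal{P}_{k,n}=0$)'', but the scattering equations as defined in the paper only set to zero the derivatives with respect to the non-gauge-fixed coordinates $z_{ij}$; to conclude that the derivatives with respect to the frame points' coordinates also vanish (which your kernel argument needs, since $\Phi^{(k)}$ is the Hessian in \emph{all} $(k-1)n$ coordinates $w_{ab}$), you must invoke invariance plus the fact that the $PGL(k)$ action on a projective frame is infinitesimally free, so the generator components along the frame directions form an invertible $(k^2-1)\times(k^2-1)$ block. Second, and this is the crux you yourself flag but defer: the compound-matrix factorization only shows that $\det\left(\lbrack \Phi^{(k)}\rbrack^A_{B}\right)\big/\left(\det(U_A)\det(U_B)\right)$ is independent of $A,B$. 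The Claim asserts the specific normalization $V_AV_B$, so you must still prove $\det(U_A)=\pm c\,V_A$ with $c$ independent of $A$, i.e.\ that the maximal minors of the matrix of $\mathfrak{sl}(k)$ generators on the column block of any $k+1$ points reproduce $V_A=\prod_{j}\det(g_{a_1},\ldots,\widehat{g_{a_j}},\ldots,g_{a_{k+1}})$. For $k=2$ this is the classical Vandermonde identity behind the CHY $\det'$; for general $k$ it is precisely the content that separates the Claim from a generic linear-algebra fact. Without it (and without pinning the signs, since the Claim asserts equality of ratios, not equality up to sign), your argument establishes that \emph{some} $A$-dependent normalization works, not that $V_AV_B$ does.
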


\subsection{Generalized Biadjoint Scalar, planar basis and blades}\label{subsec:generalizedBiadjointScalar}

The so-called generalized Parke-Taylor factor is the inverse of the product of maximal $k\times k$ minors $d_{i_1\cdots i_k} = \det(g_{i_1},\ldots, g_{i_k})$,
$$PT^{(k)}\left(\alpha\right) = \frac{1}{d_{\alpha_1\alpha_2\cdots \alpha_k}d_{\alpha_2\alpha_3\cdots \alpha_{(k+1)}}\cdots d_{\alpha_n\alpha_1\cdots \alpha_{(k-1)}}},$$
where $\alpha = (\alpha_1\cdots \alpha_n)$ is a cyclic order on $\{1,\ldots, n\}$. 

\begin{defn}
	For any pair of cyclic orders $\alpha = (\alpha_1\cdots\alpha_n)$ and $\beta = (\beta_1\cdots \beta_n)$ on $\lbrack n\rbrack = \{1,\ldots, n\}$, define a function $m^{(k)}(\alpha,\beta)$, by 
	\begin{eqnarray*}
		m^{(k)}(\alpha,\beta) & = & \sum_{\text{soln}}\left(\frac{1}{\det'\left(\Phi^{(k)}\right)}PT^{(k)}\left(\alpha\right)PT^{(k)}\left(\beta\right)\bigg\vert_{\text{soln}}\right),
	\end{eqnarray*}
	Here the sum is over all solutions to the scattering equations.  It is an open problem to show in general that the number of solutions is finite for generic choices of kinematics, values of the Mandelstam variables $(\mathbf{s})$; however, see \cite{CachazoSingularSolutions} for enumeration of singular solutions.  However, in \cite{CE2020} Cachazo and the author proved that for all possible $k$ and $n$, on a particular $(n-2)$ dimensional subspace of $\mathcal{K}_{k,n}$, called there \textit{minimal kinematics}, the scattering equations possess a \textit{unique} solution which is in the image of a Veronese embedding!

\end{defn}

\begin{example}
	For sake of comparison, let us express in the planar basis some of the directly computed values from \cite{CEGM2019} of the $n=6$ point $m^{(3)}(\alpha,\beta)$:
	\begin{eqnarray}\label{eqn: biadjoint amplitude Example}
	m^{(3)}((1,2,3,4,5,6),(1,2,6,4,3,5)) & = & \frac{1}{\eta_{125}\eta_{245}\eta_{256}\eta_{124}}\nonumber\\
	m^{(3)}((1,2,3,4,5,6),(1,2,5,4,6,3)) & = & \frac{1}{\eta_{236}\eta_{356}\eta_{235}\eta_{256}}\nonumber\\
	m^{(3)}((1,2,3,4,5,6),(1,2,5,4,3,6)) & = & \frac{1}{\eta_{245}\eta_{125}}\left(\frac{1}{\eta_{145}}+\frac{1}{\eta_{256}}\right)\left(\frac{1}{\eta_{124}} + \frac{1}{\eta_{235}}\right)\nonumber\\
	m^{(3)}((1,2,3,4,5,6),(1,2,6,4,5,3)) & = & -\frac{1}{\eta_{256}\eta_{235}}\left(\frac{1}{\eta_{245}}+\frac{1}{\eta_{356}}\right)\left(\frac{1}{\eta_{125}} + \frac{1}{\eta_{236}}\right)\\
	m^{(3)}((1,2,3,4,5,6),(1,4,5,6,2,3)) & = & \frac{1}{\eta_{136}\eta_{236}\eta_{356}}\left(\frac{1}{\eta_{235}}+\frac{1}{\eta_{346}}\right)\nonumber \\
	m^{(3)}((1,2,3,4,5,6),(1,2,5,6,3,4)) & = & \frac{\eta_{346} + \eta_{256} + \eta_{124}}{\eta_{346}\eta_{256}\eta_{124}\eta_{246}(-\eta_{246} + \eta_{124}+\eta_{346}+\eta_{256})}\nonumber
	\end{eqnarray}
	Note the presence of $-\eta_{246} + \eta_{124}+\eta_{346}+\eta_{256}$; this is the $k=3$ case of the construction in Section \ref{sec:building blocks}, upon the substitution $\beta_J\mapsto  \eta_J$!  Here $\beta_{246}$ corresponds to $((12_1 34_1 56_1))$, while $-\beta_{246} + \beta_{124}+\beta_{346}+\beta_{256}$ corresponds to $((12_1 56_1 34_1))$.  These induce two out of the four positroidal three splits of $\Delta_{3,6}$.  Note that we have now recovered the bipyramidal relation first noticed by \cite{SpeyerWilliams2003}!  In the blade basis it appears to be an almost trivial cancellation: 
	$$\beta_{246} + (-\beta_{246} + \beta_{124} + \beta_{346} + \beta_{256}) =  \beta_{124} + \beta_{346} + \beta_{256},$$
	but it in fact expresses the nontrivial fact that the corresponding sets of positroidal subdivisions have the same common refinement.  In terms of hypersimplex blades we have that the superpositions of blades, respectively
	$$\left\{((12_1 34_1 56_1)), ((12_1 56_1 34_1))\right\}$$
	and
	$$\left\{((12_1 3456_2)), ((1234_2 56_1)), ((1256_2 34_1))\right\},$$
	induce the same (finest) positroidal subdivision of $\Delta_{3,6}$!  This identity is reflected in the nontrivial numerator in the last line of Equation \eqref{eqn: biadjoint amplitude Example}.
\end{example}

\begin{figure}[h!]
	\centering
	\includegraphics[width=1\linewidth]{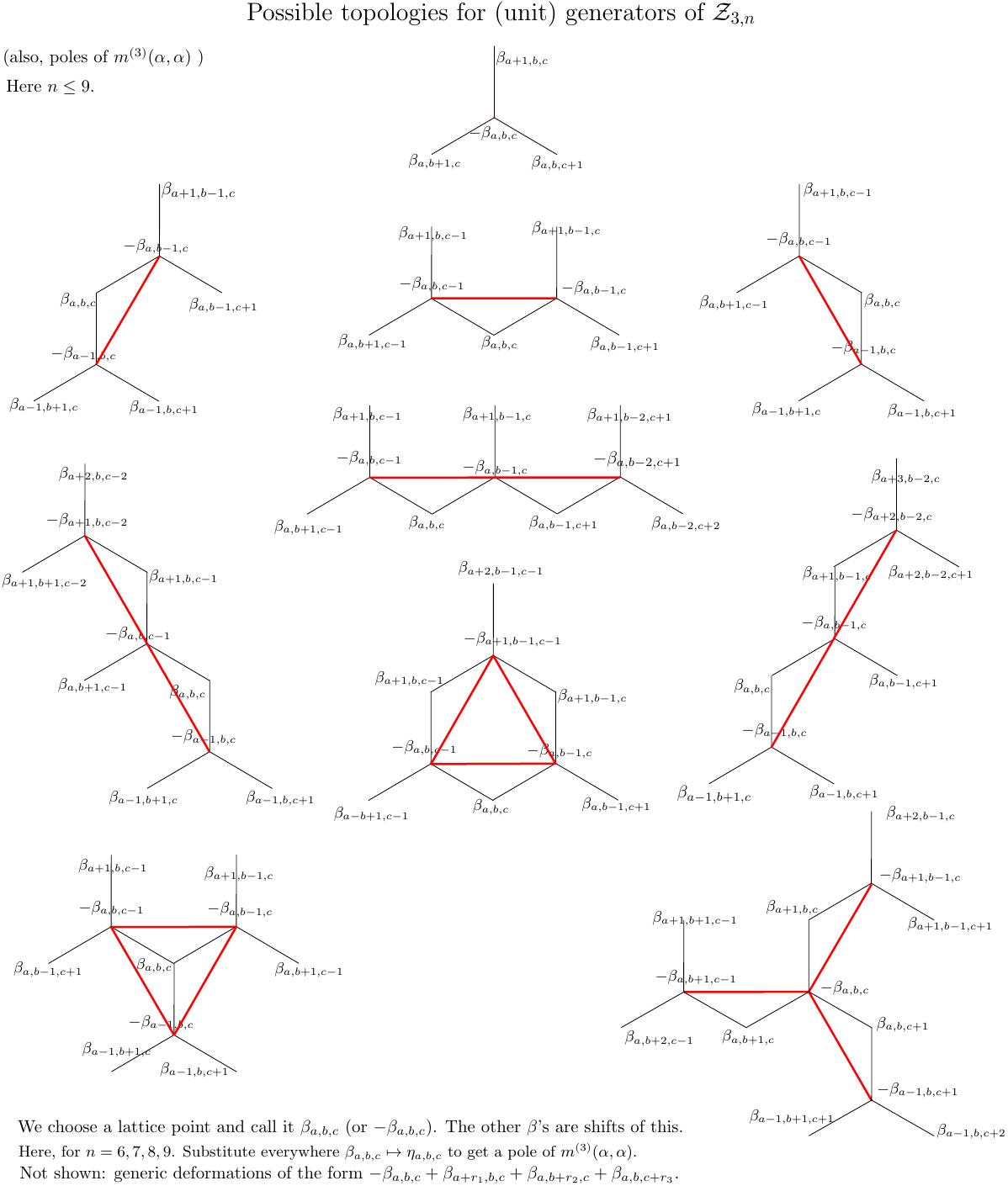}
	\caption{Classification (up to labeling) of all (coarsest) weighted blade arrangements of $\mathcal{Z}_{3,n}$ (= rays of $\text{Trop}_+G(3,n)$, by Lemma \ref{lem:posDressToBlades}) for $n\le 9$. Translated from the sets of tropical Plucker vectors for rays of the positive tropical Grassmannian as computed in \cite{CGUZ2019,Drummond2019A,HeRenZhang2020}.  The top entry first appears at $n=6$.  Entries with two tripods appear first at $n=8$.  Entries with three tripods appear first at $n=9$.  The red lines are edges on the root lattice.  Here a (unit) tripod is a weighted blade arrangement $-\beta_{a,b,c} + \beta_{a+r_1,b,c} +\beta_{a,b+r_2,c} +\beta_{a,b,c+r_3}$ where all $r_i=1$.}
	\label{fig:generalized-three-coordinate-permutohedra-biadjoint-scalar-39}
\end{figure}

\end{document}